\numberwithin{equation}{section}
\newtheorem{thm}{Theorem}[section]
\newtheorem{prop}[thm]{Proposition}
\newtheorem{lem}[thm]{Lemma}
\newtheorem{cor}[thm]{Corollary}
\theoremstyle{definition}
\newtheorem*{rem}{Remark}
\newtheorem*{remark}{Remark}
\newtheorem{ex}[thm]{Example}
\newtheorem{example}[thm]{Example}
\newtheorem*{conjecture}{Conjecture}
\newcommand{\C}{\mathbb{C}}
\newcommand{\Hom}{\operatorname{Hom}}
\newcommand{\Aut}{\operatorname{Aut}}
\newcommand{\bs}{\boldsymbol}
\newcommand{\pii}{\operatorname{pi}}
\newcommand{\pit}{\widetilde{\operatorname{pi}}}
\newcommand{\GF}{\mathrm{GF}}
\newcommand{\Paley}{\mathrm{Paley}}
\newcommand{\Terw}{\mathcal{T}}
\title{Terwilliger algebras and some related algebras defined by finite connected simple graphs}
 \author{Akihide Hanaki\thanks{Supported by JSPS KAKENHI Grant Number JP17K05165.}\\
 \small Faculty of Science, Shinshu University,\\[-0.8ex]
 \small Matsumoto, 390-8621, Japan\\[-0.8ex] 
 \small\tt hanaki@shinshu-u.ac.jp\\
 \and
 Masayoshi Yoshikawa \thanks{Supported by JSPS KAKENHI Grant Number JP20K03557.}\\
 \small Department of Mathematics,\\[-0.8ex]
 \small Hyogo University of Teacher Education,\\[-0.8ex]
 \small Shimokume, Kato, Hyogo, 673-1494, Japan\\[-0.8ex]
 \small\tt myoshi@hyogo-u.ac.jp
 }
\date{}
\begin{document}
\maketitle

\begin{abstract}
  For a finite connected simple graph, the Terwilliger algebra is a matrix algebra generated by
  the adjacency matrix and idempotents corresponding to
  the distance partition with respect to a fixed vertex.
  We will consider algebras defined by two other partitions and the centralizer algebra of the stabilizer of the fixed vertex in the automorphism group of the graph.
  We will give some methods to compute such algebras and examples for various graphs.

\vspace{5mm}
\noindent
{\it Keywords}: centralizer algebra; finite connected simple graph; Terwilliger algebra

\noindent
2020 MSC: 05C25, 05C50, 05E30
\end{abstract}

\section{Introduction}\label{sec:intro}

In \cite{MR1203683, MR1210403, MR1229433},
Paul Terwilliger defined a Terwilliger algebra, originally called a subconstituent algebra,
for an association scheme.
Especially, it was well studied for $P$- and $Q$-polynomial association schemes.
In \cite{Suzuki-NOTE}, Terwilliger algebras for an arbitrary finite connected simple graph were also studied.
Recently, Shuang-Dong Li, Yi-Zheng Fan, Tatsuro Ito, Masoud Karimi, and Jing Xu \cite{MR4147627} dealt with Terwilliger algebras of trees motivated by the following conjecture.

\begin{conjecture}[J. Koolen]
  For almost all finite connected simple graphs, the Terwilliger algebras coincide with the full matrix algebras.
\end{conjecture}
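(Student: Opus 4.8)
The plan is to work in the Erdős–Rényi model $G(n,1/2)$, which is the uniform distribution on labelled $n$-vertex graphs; since almost all such graphs are connected, it suffices to show that for a fixed vertex $x$ one has $T(\Gamma,x)=M_n(\C)$ with probability tending to $1$, a union bound over the $n$ base vertices then giving the statement for all of them at once. Write $T=T(\Gamma,x)$, let $E_0^*,\dots,E_d^*$ be the diagonal $0/1$ idempotents of the distance partition from $x$, and let $e_x$ be the standard basis vector at $x$. First I would reduce the problem by a purely linear-algebraic observation: because $A$ and each $E_i^*$ are symmetric, $T$ is closed under transpose, and $E_0^*=e_xe_x^{\mathsf T}$ is a rank-one idempotent lying in $T$. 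If the cyclic module $Te_x$ equals $\C^n$, then for arbitrary $u,v\in\C^n$ one may pick $M,N\in T$ with $Me_x=u$, $Ne_x=v$; then $ue_x^{\mathsf T}=ME_0^*\in T$ and $e_xv^{\mathsf T}=(NE_0^*)^{\mathsf T}\in T$, hence $uv^{\mathsf T}=(ue_x^{\mathsf T})(e_xv^{\mathsf T})\in T$, so $T=M_n(\C)$. The converse being trivial, everything reduces to showing $Te_x=\C^n$ with high probability.

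Next I would fix the geometry. With probability tending to $1$, $\Gamma$ has diameter $2$, so the distance partition from $x$ has exactly the three cells $\{x\}$, $R=N(x)$, $S=V\setminus(\{x\}\cup R)$, with $|R|,|S|\to\infty$; and conditioned on $\{\mathrm{ecc}(x)=2\}$ and on the set $R$, the induced graphs $\Gamma[R]$ and $\Gamma[S]$ are independent uniform random graphs, while the $R\times S$ bipartite adjacency matrix $C$ (with $C_{yz}=1$ iff $y\sim z$) is an independent uniform $0/1$ matrix conditioned only to have no zero column, a conditioning that is negligible. Unwinding $Te_x$, which is spanned by the vectors obtained from $e_x$ by applying products of $A$ and the $E_i^*$: one has $Ae_x=\mathbf 1_R\in Te_x$, and since $E_1^*AE_1^*\in T$ acts on $\C^R$ as the adjacency matrix $A_1$ of $\Gamma[R]$ and leaves $Te_x$ invariant, the subspace $\C[A_1]\mathbf 1_R\subseteq\C^R$ lies in $Te_x$.

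Then I would feed in random-matrix facts. By the theorem of O'Rourke and Touri confirming Godsil's conjecture, almost all graphs are \emph{controllable}, i.e.\ the all-ones vector is a cyclic vector for the adjacency matrix; applied to $\Gamma[R]$ (uniform on $|R|\to\infty$ vertices), this gives $\C[A_1]\mathbf 1_R=\C^R$ with high probability, so $\C^R\subseteq Te_x$. Hence $e_y\in Te_x$ for every $y\in R$, so $E_2^*Ae_y=\mathbf 1_{N(y)\cap S}\in Te_x\cap\C^S$; letting $V_0$ be the span of these vectors (the row space of $C$) and noting that $E_2^*AE_2^*\in T$ acts on $\C^S$ as the adjacency matrix $A_2$ of $\Gamma[S]$ and preserves $Te_x$, we get $\C[A_2]V_0\subseteq Te_x$. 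Applying O'Rourke–Touri to $\Gamma[S]$, with high probability $A_2$ has simple spectrum, and then $\C[A_2]V_0$ is the span of those eigenvectors of $A_2$ not lying in $V_0^{\perp}=\ker C$. Since $C$ is independent of $A_2$, condition on $A_2$ to fix its $|S|$ distinct eigenvectors; for any fixed nonzero $v$ the probability that $Cv=0$ is at most $2^{-|R|}$ (each of the $|R|$ independent uniform $0/1$ rows is orthogonal to $v$ with probability at most $1/2$), so a union bound over the $|S|\le n$ eigenvectors shows that with high probability none of them lies in $\ker C$, whence $\C[A_2]V_0=\C^S$ and $\C^S\subseteq Te_x$. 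Combining, $Te_x\supseteq\C e_x\oplus\C^R\oplus\C^S=\C^n$ on an event of probability tending to $1$, which yields $T=M_n(\C)$ for almost all graphs.

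The hard part will be that the conjecture appears to be open, so the real work is in converting this outline into a proof: verifying that conditioning on $\mathrm{ecc}(x)=2$ genuinely leaves $\Gamma[R],\Gamma[S]$ uniform and independent so that O'Rourke–Touri applies verbatim to the \emph{induced} subgraphs, making the various exceptional events uniformly negligible, and — the most delicate point — controlling the interaction between the eigenvectors of $\Gamma[S]$ and the random kernel of $C$ (the crude anti-concentration bound above should suffice, but this needs care since $\Gamma[S]$ is not regular). A variant avoids cyclic modules and argues $T'=\C I$ directly: a matrix commuting with all $E_i^*$ is block-diagonal for the distance partition, and commuting with $A$ forces its blocks into $\C[A_1]$ and $\C[A_2]$ with constant row and column sums and intertwined by $C$; the same spectral facts about $A_1$, $A_2$ and $C$ then force it to be scalar.
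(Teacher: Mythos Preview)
The paper contains no proof of this statement: it is recorded there as an \emph{open conjecture} of Koolen, serving only as motivation for the paper's investigation of the algebras $\Terw_\ell$. There is therefore nothing in the paper to compare your attempt against.

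As an outline toward the open problem, your plan is coherent. The reduction ``$Te_x=\C^n\Rightarrow T=M_n(\C)$'' via the rank-one idempotent $E_0^*=e_xe_x^{\mathsf T}$ and closure under transpose is correct. Conditioning on $R=N(x)$, the three blocks $\Gamma[R]$, $\Gamma[S]$, and the bipartite matrix $C$ are genuinely independent, and the eccentricity-$2$ constraint touches only $C$; so O'Rourke--Touri applies to $\Gamma[R]$ and to $\Gamma[S]$ as stated. The step that $\C[A_2]V_0=\C^S$ once $A_2$ has simple spectrum and no eigenvector of $A_2$ lies in $\ker C$ is also correct, and the anti-concentration bound $P(Cv=0)\le 2^{-|R|}$ for fixed nonzero $v$ (using that $C$ is independent of $A_2$) is valid.

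Where the outline is not yet a proof --- and you flag this yourself --- is quantitative: the union bound over the $n$ base vertices requires each failure probability to be $o(1/n)$, so you need the controllability and simple-spectrum statements for $G(m,1/2)$ with $m\sim n/2$ to hold with error $o(1/n)$, not merely $o(1)$; the original O'Rourke--Touri paper does not give this rate, and you would have to either extract it from later work or argue it directly. You should also make explicit which reading of ``almost all'' and of ``the Terwilliger algebras'' (all base vertices, or a single one) you are adopting. In short: the paper offers no proof, and your proposal is a plausible but as-yet-incomplete attack on a conjecture the paper leaves open.
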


After that, Jing Xu, Tatsuro Ito, and  Shuang-Dong Li \cite{Xu-Ito-Li} considered Terwilliger algebras and centralizer algebras of trees.
Then, they investigated when the two algebras coincide.

For a graph $\Gamma = (X,E)$ with the adjacency matrix $A$ and a fixed vertex $x_0 \in X$,
we will consider the following subalgebras $\Terw_\ell(\Gamma, x_0)$ ($\ell=0,1,2,3,4$) of $M_X(\C)$,
the full matrix algebra over the complex number field $\C$, rows and columns of whose matrices are indexed by the set $X$.
We denote by $E_x$ for $x\in X$ the matrix in $M_X(\C)$
whose $(x,x)$-entry is $1$ and the other entries are $0$,
and set $E_Y=\sum_{y\in Y}E_y$ for $Y\subset X$.
Let $G$ be the automorphism group of the graph $\Gamma$ and
$G_{x_0}$ the stabilizer of $x_0\in X$ in $G$.
Naturally, $G$ acts on $M_X(\C)$ by permuting rows and columns.

\begin{itemize}
  \item (the adjacency algebra)
  Set $\Terw_0(\Gamma, x_0) = \C \langle A \rangle$, the unital $\C$-subalgebra of $M_X(\C)$ generated by $A$.
  \item Set $\Terw_1(\Gamma, x_0) = \C \langle A, E_{x_0} \rangle$.
  \item (the Terwilliger algebra)
  Consider the distance partition of $X$ with respect to the vertex $x_0$:
  $X = X_0 \cup \dots \cup X_D$, where $X_k = \{ x\in X: \partial(x_0,x) = k\}$ and $D$ is the diameter of $\Gamma$ with respect to $x_0$, the maximal distance from $x_0$.
  Set $\Terw_2(\Gamma, x_0) = \C \langle A, E_{X_0}, E_{X_1}, \dots, E_{X_D} \rangle$.
  \item Let $Y_1, \dots, Y_r$ be the $G_{x_0}$-orbits on $X$.
  Set $\Terw_3(\Gamma, x_0) = \C \langle A, E_{Y_1}, \dots, E_{Y_r} \rangle$.
  \item (the centralizer algebra)
  Set $\Terw_4(\Gamma, x_0) = \{M\in M_X(\C): \text{$M^\sigma=M$ for any $\sigma \in G_{x_0}$}\}$.
\end{itemize}

The vertex $x_0$ will be called the \emph{base vertex}.
When there is not fear of the confusion, we omit $\Terw_\ell(\Gamma, x_0)$ with  $\Terw_\ell$.
Since the distances are preserved by automorphisms, we can see that 
\[
 \Terw_0 \subset \Terw_1 \subset \Terw_2 \subset \Terw_3 \subset \Terw_4 \subset M_X(\C).
\]
The aim of this paper is computing examples of these algebras for various simple graphs
and finding examples such that these algebras are different.
We remark that all $\Terw_\ell$ are semisimple, because they are closed by transpose and complex conjugate. 
Since we are considering semisimple algebras over an algebraically closed field, they are isomorphic to direct sums of full matrix algebras.
We also remark that the adjacency algebra $\Terw_0$ does not depend on $x_0$ and is commutative.

Since the algebras $\Terw_\ell$ ($\ell=0,1,2,3,4$) are defined as subalgebras of $M_X(\C)$, they act on $\C X$, the vector space with a formal basis $X$, by right multiplication.
The vector space $\C X$ will be called the \emph{standard} $\Terw_\ell$-module.

The conjecture by Koolen and the paper \cite{MR4147627} are about the relationship between $\Terw_2$ and $M_X(\C)$.
And the paper \cite{Xu-Ito-Li} gives a necessary and sufficient condition for $\Terw_2 = \Terw_4$ for trees.

This paper is organized as follows.
Section \ref{sec:prelim} describes notations and terminology on finite connected simple graphs and idempotents of semisimple algebras.
In Section \ref{sec:basic}, we list basic facts for $\Terw_\ell$ ($\ell = 0,1,2,3,4$).
In Section \ref{sec:T1}, we deal with the structure of $\Terw_1$.
Especially, we determine the structure of $\Terw_1$ for distance-regular graphs.
In Section \ref{sec:T2SRG}, we consider $\Terw_2$ for strongly regular graphs.
In Section \ref{sec:val1}, we consider the structure of $\Terw_2$ and $\Terw_3$ with respect to the base vertex of valency $1$.
Proposition \ref{prop:valencyone} given in this section is used for a construction of an infinite family of examples for $\Terw_2 \neq \Terw_3$ (Section \ref{sec:Gamma}).
In Section \ref{sec:someGRPH}, we show the structures of $\Terw_\ell$ for path graphs, star graphs and cycle graphs.
Finally, we give infinite families of examples for
$\Terw_3 \neq \Terw_4$ in Section \ref{sec:Paley} (Paley graphs) and for
$\Terw_2 \neq \Terw_3$ in Section \ref{sec:Gamma}.

\section{Preliminaries}\label{sec:prelim}
Let $\Gamma=(X,E)$ be a finite connected simple graph.
Let $M_X(\C)$ be the full matrix algebra over the complex number field $\C$,
rows and columns of whose matrices are indexed by the set $X$.
For $M\in M_X(\C)$, $M_{x,y}$ will be the $(x,y)$-entry of $M$.
For $x,y\in X$, we write $E_{x,y}$ for the matrix unit.
The identity matrix in $M_X(\C)$ will be denoted by $I$ or $I_X$.
The all-one matrix in $M_X(\C)$ will be denoted by $J$ or $J_X$.
The zero matrix in $M_X(\C)$ will be denoted by $O$ or $O_X$.
We write by $A = A(\Gamma) \in M_X(\C)$ the adjacency matrix of $\Gamma$,
namely, $A=\sum_{\{x,y\} \in E} \left( E_{x,y} + E_{y,x} \right)$.
For $Y \subset X$, $E_Y \in M_X(\C)$ will be the diagonal matrix whose diagonal entry $(E_Y)_{x,x}$ is $1$ if $x \in Y$ and $0$ otherwise.
Namely, $E_Y = \sum_{y \in Y} E_{y,y}$.
When $Y = \{y\}$, we write $E_y$ instead of $E_{\{y\}}$.
Let $G$ be the automorphism group of the graph $\Gamma$.
For $x \in X$, $G_x$ will be the stabilizer of $x$ in $G$.
Naturally, $G$ acts on $M_X(\C)$ by permuting rows and columns.

\subsection{Finite connected simple graphs}
Let $X$ be a finite set and $E$ be a subset of $2$-subsets of $X$.
In this case, we say that $\Gamma=(X,E)$ a \emph{finite simple graph}.
An element of $X$ is called a \emph{vertex} and an element of $E$ is called an \emph{edge}.
When $\{x,y\}\in E$, we say that $x$ and $y$ are \emph{adjacent} in $\Gamma$.
A \emph{walk} of \emph{length} $\ell$ in $\Gamma$ is a finite sequence $x_0,x_1,\dots,x_\ell$ in $X$ such that
$\{x_{i-1},x_i\}\in E$ for all $i=1,2,\dots,\ell$.
In this case, the walk is called a walk from $x_0$ to $x_\ell$.
Remark that a walk can contain the same vertices.
For $x,y \in X$, the \emph{distance} of $x$ and $y$ is
the minimal length of a walk from $x$ to $y$,
and denoted by $\partial(x,y)$.
If there is no walk from $x$ to $y$, we set $\partial(x,y) = \infty$.
A graph is said to be \emph{connected} if $\partial(x,y) < \infty$ for all $x,y\in X$.
The maximal distance will be called the \emph{diameter} of $\Gamma$.
For $x_0 \in X$, the \emph{diameter} of $\Gamma$ with respect to $x_0$ is the
maximal distance from $x_0$ : $\max\{\partial(x_0,y): y\in X\}$.

Let $\Gamma=(X,E)$ be a finite simple connected graph.
An \emph{automorphism} $\sigma$ of $\Gamma$ is a permutation on $X$ such that $\{x, y\} \in E$
if and only if $\{\sigma(x), \sigma(y)\} \in E$.
The \emph{automorphism group} $\Aut(\Gamma)$ is the group
consisting of all automorphisms of $\Gamma$.
For $x\in X$, the number of neighbors of $x$ is called the \emph{valency} of $x$.
A graph $\Gamma$ is said to be ($k$-)\emph{regular}
if all vertices have the same valency $k$. 
A graph $\Gamma$ is said to be \emph{vertex-transitive} 
if $\Aut(\Gamma)$ is transitive on $X$.
A graph $\Gamma$ is said to be \emph{distance-transitive} 
if, for any $x_1, x_2, y_1, y_2 \in X$ such that $\partial(x_1,x_2) = \partial(y_1,y_2)$,
there exists $\sigma \in \Aut(\Gamma)$ such that $\sigma(x_1) = y_1$ and $\sigma(x_2) = y_2$.
A graph $\Gamma$ is said to be \emph{distance-regular} 
if, for $i,j,k = 0, 1, \dots, D$, where $D$ is the diameter of $\Gamma$, there exists a
non-negative integer $p_{ij}^k$ such that 
$\sharp\{z \in X: \partial(x,z) = i,\ \partial(z,y)=j\}=p_{ij}^k$
for any $x,y \in X$ with $\partial(x,y)=k$.
Clearly, a distance-transitive graph is vertex-transitive and distance-regular.

A $k$-regular graph with $n$ vertices is said to be \emph{strongly regular}, with parameters $(n, k, \lambda, \mu)$ if every adjacent vertices have $\lambda$ common neighbors, and every non-adjacent vertices have $\mu$ common neighbors.
A strongly regular graph is called \emph{primitive} if both of the graph and its complement are connected.
In this article, when we say a strongly regular graph, we mean a primitive strongly regular graph.

The \emph{adjacency matrix} $A$ of a finite simple connected graph $\Gamma$ is a matrix in $M_X(\C)$ such that
$A_{x,y}=1$ if $\{x,y\}\in E$ and $0$ otherwise.
For the adjacency matrix $A$ and a non-negative integer $k$, the $(x,y)$-entry of $A^k$
is the number of walks of length $k$ from $x$ to $y$ \cite[Lemma 2.5]{MR1271140}.
Let $D$ be the diameter of $\Gamma$.
Then $A$ has at least $D+1$ distinct eigenvalues \cite[Corollary 2.7]{MR1271140}.
Remark that all eigenvalues of $A$ are real numbers since $A$ is a real symmetric matrix.
An eigenvalue of $A$ is also called an \emph{eigenvalue} of the graph $\Gamma$.

\subsection{Idempotents of semisimple algebras}

In this subsection, we will summarize basic facts on idempotents of algebras.
For details, see \cite{MR998775}, for example.

Let $K$ be a field and $\mathcal{A}$ a finite dimensional $K$-algebra.
In this article an $\mathcal{A}$-module means a finite dimensional right $\mathcal{A}$-module.
A nonzero element $e$ of $\mathcal{A}$ is called an \emph{idempotent} if $e^2=e$.
Two idempotents $e$ and $f$ are said to be \emph{orthogonal} if $ef=fe=0$.
An idempotent $e$ is said to be \emph{primitive} if $e$ is not expressed as a sum of two orthogonal idempotents. 
We say that $e = e_1 + \dots + e_r$ is an \emph{idempotent decomposition} of $e$ if all $e_i$ are idempotents and they are orthogonal to each other.
An idempotent decomposition $e = e_1 + \dots + e_r$ is said to be a \emph{primitive idempotent decomposition} if all $e_i$ are primitive.
Let $e = e_1 + \dots + e_r$ be an idempotent decomposition of $e$.
Then we have a direct sum decomposition $e \mathcal{A} = e_1 \mathcal{A} \oplus \dots \oplus e_r \mathcal{A}$ as an $\mathcal{A}$-module.
Conversely, a direct sum decomposition of $e \mathcal{A}$ induces an idempotent decomposition.
Thus an idempotent $e$ is primitive if and only if $e \mathcal{A}$ is an indecomposable $\mathcal{A}$-module.

By $\pii(\mathcal{A})$, we denote the set of all primitive idempotents of $\mathcal{A}$.
For $e, f \in \pii(\mathcal{A})$, we define $e \sim f$ if $e \mathcal{A} \cong f \mathcal{A}$ as $\mathcal{A}$-modules.
Clearly, this is an equivalence relation on $\pii(\mathcal{A})$.
By $\pit(\mathcal{A})$, we denote the set of all equivalence classes of $\pii(\mathcal{A})$, and by $[e]$ the equivalence class containing $e\in\pii(\mathcal{A})$.
For an $\mathcal{A}$-module $V$ and an idempotent $e$ of $\mathcal{A}$, we have $\Hom_\mathcal{A}(V, e\mathcal{A})\cong Ve$ as $K$-spaces.
Thus, for $e, f\in \pii(\mathcal{A})$,  $e\sim f$ if and only if $e\mathcal{A} f\ne 0$.

\begin{lem}\label{lem:equividemp}
  Let $\mathcal{A}$ be a finite dimensional algebra over a field $K$.
  For an idempotent $e$ of $\mathcal{A}$, $e \mathcal{A} e$ is a $K$-algebra with the identity element $e$.
  For an idempotent $f$ of $e \mathcal{A} e$, $f$ is primitive in $e \mathcal{A} e$ if and only if $f$ is primitive in $\mathcal{A}$.
  For $f,f'\in\pii(e\mathcal{A} e)$, $f$ and $f'$ are equivalent in $e \mathcal{A} e$ if and only if they are equivalent in $\mathcal{A}$.
\end{lem}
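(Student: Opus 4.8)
The plan is to reduce every assertion to the elementary identity $ef = fe = f$, which holds for every $f \in e\mathcal{A}e$: writing $f = eae$ we get $ef = e(eae) = eae = f$ and, symmetrically, $fe = f$. Granting this, the first assertion is immediate and I would dispose of it in one line: $e\mathcal{A}e$ is a $K$-subspace of $\mathcal{A}$ closed under multiplication since $(eae)(ebe) = e(aeb)e \in e\mathcal{A}e$, and $e(eae) = (eae)e = eae$ for all $a$, so $e$ is a two-sided identity of $e\mathcal{A}e$.

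For the second assertion I would show that the decompositions of $f$ into two orthogonal idempotents are literally the same whether computed in $\mathcal{A}$ or in $e\mathcal{A}e$; since primitivity of $f$ in either algebra is precisely the non-existence of such a decomposition, the two properties then coincide. One inclusion is free: if $f = g + h$ with $g,h$ orthogonal idempotents of $e\mathcal{A}e$, then, because $e\mathcal{A}e \subseteq \mathcal{A}$, this is already an orthogonal idempotent decomposition in $\mathcal{A}$. The substance is the converse. Given an orthogonal idempotent decomposition $f = g + h$ in $\mathcal{A}$, one computes $gf = g(g+h) = g$ and $fg = (g+h)g = g$ (using $g^2 = g$ and $gh = hg = 0$), and then, combining $fg = gf = g$ with $ef = fe = f$, one gets $g = fg = e(fg) = eg$ and $g = gf = (gf)e = ge$, whence $g = ege \in e\mathcal{A}e$; the same holds for $h$. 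Thus $f = g + h$ is an orthogonal idempotent decomposition in $e\mathcal{A}e$. I expect this automatic membership $g,h \in e\mathcal{A}e$ to be the only point requiring care, and it is really the heart of the lemma.

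For the third assertion, I would first invoke the second assertion to know that $f,f' \in \pii(e\mathcal{A}e)$ lie also in $\pii(\mathcal{A})$, so that equivalence of $f$ and $f'$ is meaningful in both algebras. Then I would apply, separately in $\mathcal{A}$ and in the finite dimensional $K$-algebra $e\mathcal{A}e$, the criterion recorded just before the lemma, namely that for primitive idempotents $e_1,e_2$ of a finite dimensional algebra $\mathcal{B}$ one has $e_1 \sim e_2$ if and only if $e_1 \mathcal{B} e_2 \neq 0$. This reduces the claim to the identity $f(e\mathcal{A}e)f' = f\mathcal{A}f'$, which once more follows from $f = fe$ and $f' = ef'$, since $f\mathcal{A}f' = (fe)\mathcal{A}(ef') = f(e\mathcal{A}e)f'$. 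Hence $f \sim f'$ in $e\mathcal{A}e$ iff $f(e\mathcal{A}e)f' \neq 0$ iff $f\mathcal{A}f' \neq 0$ iff $f \sim f'$ in $\mathcal{A}$, which would complete the proof.
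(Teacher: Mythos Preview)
Your proof is correct and follows essentially the same route as the paper's. The paper likewise omits the first assertion as obvious, proves the second by showing that any decomposition $f=g+g'$ in $\mathcal{A}$ forces $g=fgf=ege\in e\mathcal{A}e$ via $gf=fg=g$ and $ef=fe=f$, and proves the third by the identity $f\mathcal{A}f'=f(e\mathcal{A}e)f'$ together with the criterion $f\sim f'\iff f\mathcal{B}f'\neq 0$.
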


\begin{proof}
  If $f$ is primitive in $\mathcal{A}$, then clearly it is primitive in $e\mathcal{A} e$.
  Suppose that $f$ is not primitive in $\mathcal{A}$,
  $f = g + g'$ is an idempotent decomposition of $f \in e \mathcal{A} e$ in $\mathcal{A}$.
  Since $f \in e \mathcal{A}e$, it follows that $f = fe = ef$.
  Then $gf = g(g+g') = g^2 = g$, and similarly $fg = g$ and $g'f = fg' = g'$.
  This means that $g = fgf = ege \in e\mathcal{A} e$ and $g' \in e \mathcal{A} e$.
  Thus $f$ is not primitive in $e \mathcal{A} e$.

  Suppose $f,f'\in \pii(e\mathcal{A} e)$.
  If $f$ and $f'$ are equivalent in $e \mathcal{A} e$, then clearly they are equivalent in $\mathcal{A}$.
  Suppose that $f$ and $f'$ are equivalent in $\mathcal{A}$.
  Since $f,f'\in e\mathcal{A} e$, $f=fe$ and $f'=ef'$, we have
  $0\ne f\mathcal{A} f'=f(e\mathcal{A} e)f'$ and so they are equivalent in $e\mathcal{A} e$.
\end{proof}

Every idempotent of a finite dimensional algebra has a primitive idempotent decomposition.

\begin{lem}
Let $e=e_1+\dots+ e_r$ be an idempotent decomposition of an idempotent $e$,
and $e_i=f_1^{(i)}+\dots+f_{r_i}^{(i)}$ primitive idempotent decompositions of $e_i$, $i=1,\dots,r$.
Then $e=\sum_{i=1}^r\sum_{j=1}^{r_i}f_{j}^{(i)}$ is a primitive idempotent decomposition of $e$.
\end{lem}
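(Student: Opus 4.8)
The plan is to verify directly the two defining properties of a primitive idempotent decomposition of $e$: that the family $\{f_j^{(i)}\}$ consists of mutually orthogonal idempotents whose sum is $e$, and that each $f_j^{(i)}$ is primitive. Primitivity and the fact that each $f_j^{(i)}$ is a (nonzero) idempotent are part of the hypothesis, so all the content lies in the sum identity and in the orthogonality relations. The sum is immediate: since $\sum_{j=1}^{r_i} f_j^{(i)} = e_i$ for each $i$ and $\sum_{i=1}^r e_i = e$, we get $\sum_{i=1}^r\sum_{j=1}^{r_i} f_j^{(i)} = e$ at once.

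For orthogonality I would split into two cases. When $i = i'$ and $j \ne j'$, the idempotents $f_j^{(i)}$ and $f_{j'}^{(i)}$ are orthogonal because $e_i = f_1^{(i)} + \dots + f_{r_i}^{(i)}$ was assumed to be an idempotent decomposition. For the cross terms with $i \ne i'$, the key observation is the one-sided absorption identity $e_i f_j^{(i)} = f_j^{(i)} e_i = f_j^{(i)}$, obtained by expanding $e_i f_j^{(i)} = \bigl(\sum_{k} f_k^{(i)}\bigr) f_j^{(i)} = (f_j^{(i)})^2 = f_j^{(i)}$ using orthogonality within the $i$-th block (and symmetrically on the other side). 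Then
\[
 f_j^{(i)} f_k^{(i')} = f_j^{(i)} e_i e_{i'} f_k^{(i')} = 0,
\]
because $e_i e_{i'} = 0$ for $i \ne i'$, as $e = e_1 + \dots + e_r$ is an idempotent decomposition; the same computation yields $f_k^{(i')} f_j^{(i)} = 0$. This gives mutual orthogonality of the whole family, and combined with the sum identity and the assumed primitivity of each $f_j^{(i)}$, the proof is complete.

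I do not expect a real obstacle here; the only point requiring a little care is to remember that the term "idempotent decomposition" already builds in pairwise orthogonality of the summands, so the relations $e_i e_{i'} = 0$ ($i \ne i'$) and $f_k^{(i)} f_j^{(i)} = 0$ ($k \ne j$) are available directly from the hypotheses without any additional argument.
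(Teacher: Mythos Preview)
Your proof is correct and follows essentially the same approach as the paper: both verify orthogonality by splitting into the cases $i=i'$ and $i\ne i'$, using in the latter case the absorption identity $f_j^{(i)} = e_i f_j^{(i)} = f_j^{(i)} e_i$ together with $e_i e_{i'}=0$. The only cosmetic difference is that the paper writes the absorption two-sidedly as $f_j^{(i)} = e_i f_j^{(i)} e_i$, which amounts to the same thing.
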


\begin{proof}
  All $f_j^{(i)}$ are primitive idempotents.
  It is enough to show that the orthogonality of them.
  Suppose $(i,j)\ne (i'j')$.
  If $i=i'$, then $f_j^{(i)}f_{j'}^{(i)}=f_{j'}^{(i)}f_{j}^{(i)}=0$
  since $e_i=f_1^{(i)}+\dots+f_{r_i}^{(i)}$ is a  primitive idempotent decomposition.
  Suppose that $i\ne i'$.
  We know $f_{j}^{(i)}=e_if_{j}^{(i)}e_i$ and $f_{j'}^{(i')}=e_{i'}f_{j'}^{(i')}e_{i'}$.
  Thus we have $f_j^{(i)}f_{j'}^{(i')}=e_if_{j}^{(i)}e_ie_{i'}f_{j'}^{(i')}e_{i'}=0$ and 
  $f_{j'}^{(i')}f_{j}^{(i)}=e_{i'}f_{j'}^{(i')}e_{i'}e_{i}f_{j}^{(i)}e_{i}=0$.
\end{proof}

Now we suppose that $K=\C$ the complex number field and $\mathcal{A}$ is semisimple.
In this case, $\mathcal{A}\cong \bigoplus_{i=1}^s M_{n_i}(\C)$ for some positive integers $n_i$ by Wedderburn's Theorem \cite[Theorem 1.8.5]{MR998775}.
The projections $\pi_i:\mathcal{A}\to M_{n_i}(C)$, $i=1.\dots,s$, are representatives of equivalent classes of irreducible representations of $\mathcal{A}$.
Also there is a bijection between $\pit(\mathcal{A})$ and the set of isomorphism classes of simple $\mathcal{A}$-modules by $[e]\mapsto e\mathcal{A}$.
Thus every primitive idempotent belongs to exactly one competent $M_{n_i}(\C)$.
A primitive idempotent of $\mathcal{A}$ is similar to a diagonal matrix unit of some $M_{n_i}(\C)$.
Let $1=e_1+\dots+e_n$ be a primitive idempotent decomposition of $1$ in $\mathcal{A}$.
Then $n_i$ is the number of $e_j$'s belonging to $M_{n_i}(\C)$ and
is equal to the dimension of the corresponding simple module.

\section{Basic facts}\label{sec:basic}

In this section, we will state some basic facts.
We keep the notations in Introduction.
We suppose that $\Gamma=(X,E)$ is a finite simple connected graph and fix $x_0\in X$.
The automorphism group $G=\Aut(\Gamma)$ also acts on $X\times X$ by $\sigma(x,y)=(\sigma(x),\sigma(y))$.

\begin{thm} \cite[p.10]{MR1271140} \label{basic0}
  Let $\Gamma$ be a finite simple connected graph whose adjacency matrix has $t$ distinct eigenvalues.
  Then it follows that $\dim \Terw_0(\Gamma, x_0) = t$ and $\Terw_0(\Gamma, x_0) \cong \bigoplus_{i=1}^{t} \C$.
\end{thm}

\begin{lem}\label{basic1}
  If there is $\sigma\in G$ such that $\sigma(x_0)=y_0$,
  then $\Terw_\ell(\Gamma,x_0)\cong \Terw_\ell(\Gamma,y_0)$.
  If $\Gamma$ is vertex-transitive, then the structure of $\Terw_\ell(\Gamma, x_0)$ ($\ell=0,1,2,3,4$)
  does not depend on the base vertex $x_0$.
\end{lem}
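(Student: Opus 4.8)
The plan is to use the action of $G$ on $M_X(\C)$. For $\sigma \in G$, the map $\phi_\sigma \colon M \mapsto M^\sigma$ is an algebra automorphism of $M_X(\C)$, since it is conjugation by a permutation matrix (so it is multiplicative and fixes $I$), with inverse $\phi_{\sigma^{-1}}$. I would fix $\sigma \in G$ with $\sigma(x_0) = y_0$ and show that $\phi_\sigma$ carries $\Terw_\ell(\Gamma, x_0)$ onto $\Terw_\ell(\Gamma, y_0)$ for each $\ell = 0,1,2,3,4$; its restriction is then an algebra isomorphism $\Terw_\ell(\Gamma, x_0) \to \Terw_\ell(\Gamma, y_0)$, which is what we want.

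The first step is to record how $\phi_\sigma$ acts on the matrices used to define the $\Terw_\ell$. Since $\sigma$ is a graph automorphism, $A^\sigma = A$; and $E_Y^\sigma = E_{\sigma(Y)}$ for every $Y \subset X$, directly from the definition of the action. Because automorphisms preserve distances, $\sigma$ maps, for each $k$, the $k$-th distance cell about $x_0$ onto the $k$-th distance cell about $y_0$, and in particular the diameters of $\Gamma$ with respect to $x_0$ and $y_0$ coincide. Finally, since $\sigma(x_0) = y_0$ we have $\sigma G_{x_0} \sigma^{-1} = G_{y_0}$, and hence $\sigma$ sends the collection of $G_{x_0}$-orbits on $X$ bijectively onto the collection of $G_{y_0}$-orbits: if $Y_i = G_{x_0} \cdot y$ then $\sigma(Y_i) = G_{y_0} \cdot \sigma(y)$, and these images again partition $X$.

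Granting these facts, the cases $\ell = 0,1,2,3$ are immediate. The algebra $\Terw_\ell(\Gamma, x_0)$ is generated by a prescribed set — $\{A\}$, or $\{A, E_{x_0}\}$, or $\{A, E_{X_0}, \dots, E_{X_D}\}$, or $\{A, E_{Y_1}, \dots, E_{Y_r}\}$ — and $\phi_\sigma$ carries this set onto the corresponding generating set of $\Terw_\ell(\Gamma, y_0)$; since an algebra homomorphism commutes with forming the subalgebra generated by a set, $\phi_\sigma$ maps $\Terw_\ell(\Gamma, x_0)$ onto $\Terw_\ell(\Gamma, y_0)$. For $\ell = 4$ I would argue from the defining condition directly: given $M$ with $M^\tau = M$ for all $\tau \in G_{x_0}$, and $\rho \in G_{y_0}$, write $\rho = \sigma \tau \sigma^{-1}$ with $\tau \in G_{x_0}$; using that the action is compatible with composition, $(M^\sigma)^\rho$ collapses to $M^\sigma$ by means of $M^\tau = M$. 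Hence $\phi_\sigma\big(\Terw_4(\Gamma, x_0)\big) \subseteq \Terw_4(\Gamma, y_0)$, and applying the same argument to $\sigma^{-1}$ gives equality. This proves the first assertion, and the second follows at once: if $\Gamma$ is vertex-transitive, any two base vertices are related by some $\sigma \in G$, so the isomorphism type of $\Terw_\ell(\Gamma, x_0)$ does not depend on $x_0$.

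There is no serious obstacle; the argument is essentially bookkeeping. The one point to be careful about is the convention for the action of $G$ on $M_X(\C)$: according to whether it is written as a left or a right action, one must conjugate by $\sigma$ or by $\sigma^{-1}$ so that $E_{x_0}$ and $G_{x_0}$ are sent to $E_{y_0}$ and $G_{y_0}$ rather than to the objects attached to $\sigma^{-1}(x_0)$. Once this normalization is fixed every assertion above is a one-line verification, and, incidentally, nothing here uses the semisimplicity of the $\Terw_\ell$.
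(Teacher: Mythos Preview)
Your argument is correct and is exactly the natural one: conjugation by the permutation matrix of $\sigma$ gives an algebra automorphism of $M_X(\C)$ carrying the generators (or, for $\ell=4$, the defining fixed-point condition) of $\Terw_\ell(\Gamma,x_0)$ to those of $\Terw_\ell(\Gamma,y_0)$. The paper itself offers no proof of this lemma---it is stated among the ``basic facts'' and left to the reader---so there is nothing to compare against; your write-up simply supplies the omitted verification.
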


\begin{lem}\label{basic2}
  If $\Gamma$ is distance-transitive, then $\Terw_2(\Gamma,x_0) = \Terw_3(\Gamma,x_0)$.
\end{lem}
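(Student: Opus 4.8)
The plan is to prove the equality by showing that $\Terw_2$ and $\Terw_3$ have the same generating set. First, I would record the general inclusion $\Terw_2 \subseteq \Terw_3$: any $\sigma \in G_{x_0}$ preserves the function $\partial(x_0, -)$, so every distance class $X_k$ is stable under $G_{x_0}$ and is therefore a union of $G_{x_0}$-orbits $Y_i$. Consequently $E_{X_k} = \sum_{Y_i \subseteq X_k} E_{Y_i} \in \Terw_3$, and since $A \in \Terw_3$ by definition, all the generators of $\Terw_2$ lie in $\Terw_3$. (This inclusion is in fact one of those already asserted in the Introduction.)

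The heart of the argument is the reverse inclusion, where distance-transitivity enters. I would show that under this hypothesis each distance class $X_k$ is a single $G_{x_0}$-orbit. Take $x, y \in X_k$, so that $\partial(x_0, x) = k = \partial(x_0, y)$. Applying the definition of distance-transitivity to the pairs $(x_0, x)$ and $(x_0, y)$ yields $\sigma \in \Aut(\Gamma)$ with $\sigma(x_0) = x_0$ and $\sigma(x) = y$; thus $\sigma \in G_{x_0}$ and $\sigma$ carries $x$ to $y$, proving that $G_{x_0}$ is transitive on $X_k$. Combined with the first paragraph (each $G_{x_0}$-orbit is contained in one distance class), this identifies the family of $G_{x_0}$-orbits $\{Y_1, \dots, Y_r\}$ with the family of distance classes $\{X_0, \dots, X_D\}$; in particular $r = D+1$ and the generating sets $\{A, E_{X_0}, \dots, E_{X_D}\}$ and $\{A, E_{Y_1}, \dots, E_{Y_r}\}$ coincide, so $\Terw_2 = \Terw_3$.

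I do not expect a genuine obstacle: the statement is essentially a translation of the combinatorial condition ``distance-transitive'' into the equality of the two vertex partitions, after which the equality of algebras falls out for free. The only point needing a little care is the elementary observation that an automorphism fixing $x_0$ preserves distances from $x_0$, which is what glues the two partitions together; this is already used implicitly in the chain $\Terw_2 \subset \Terw_3 \subset \Terw_4$ recorded in the Introduction.
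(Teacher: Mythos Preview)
Your argument is correct and is essentially the same as the paper's proof, which consists of the single sentence ``In this case, the distance partition with respect to the base vertex $x_0$ is just the set of $G_{x_0}$-orbits.'' You have simply unpacked that sentence into its two constituent inclusions.
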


\begin{proof}
  In this case, the distance partition with respect to the base vertex $x_0$ is just the set of $G_{x_0}$-orbits.
\end{proof}

\begin{lem}\label{basic4}
  Let $Y_1,\dots,Y_r$ be $G_{x_0}$-orbits on $X\times X$.
  Set $A_{Y_i} = \sum_{(x,y) \in Y_i} E_{x,y}$.
  Then $\{A_{Y_1}, \dots, A_{Y_r}\}$ spans $\Terw_4(\Gamma,x_0)$ and $\dim \Terw_4(\Gamma,x_0)=r$. 
\end{lem}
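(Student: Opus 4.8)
The plan is to identify $\Terw_4(\Gamma,x_0)$ with the space of $G_{x_0}$-fixed points in $M_X(\C)$ under the action that permutes rows and columns simultaneously, and then observe that this action is nothing but the permutation action on the natural basis $\{E_{x,y} : (x,y)\in X\times X\}$ indexed by the set $X\times X$. Concretely, for $\sigma\in G$ one has $(E_{x,y})^\sigma = E_{\sigma(x),\sigma(y)}$, so a matrix $M=\sum_{(x,y)}M_{x,y}E_{x,y}$ satisfies $M^\sigma = \sum_{(x,y)}M_{x,y}E_{\sigma(x),\sigma(y)} = \sum_{(x,y)}M_{\sigma^{-1}(x),\sigma^{-1}(y)}E_{x,y}$. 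Hence $M^\sigma = M$ for all $\sigma\in G_{x_0}$ if and only if the entry function $(x,y)\mapsto M_{x,y}$ is constant on each $G_{x_0}$-orbit of $X\times X$.

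The first step I would carry out is to make the displayed computation above precise and record that $M\in\Terw_4$ iff $M_{x,y}=M_{\sigma(x),\sigma(y)}$ for all $\sigma\in G_{x_0}$ and all $(x,y)$. The second step is to note that the matrices $A_{Y_i}=\sum_{(x,y)\in Y_i}E_{x,y}$ built from the orbits $Y_1,\dots,Y_r$ are each $G_{x_0}$-invariant (since $\sigma$ permutes the pairs within $Y_i$), hence lie in $\Terw_4$. The third step is linear algebra: given any $M\in\Terw_4$, write $M=\sum_{i=1}^r c_i A_{Y_i}$ where $c_i$ is the common value of $M_{x,y}$ on the orbit $Y_i$; this shows $\{A_{Y_1},\dots,A_{Y_r}\}$ spans $\Terw_4$. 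Finally, linear independence is immediate because the orbits $Y_i$ are pairwise disjoint and nonempty, so the $A_{Y_i}$ have disjoint supports among the matrix units; therefore $\dim\Terw_4 = r$.

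I do not expect any real obstacle here: the only point requiring a moment's care is getting the index bookkeeping right in the change of variables $M^\sigma$ vs. $M$, i.e.\ keeping track of whether the orbit condition is $M_{x,y}=M_{\sigma(x),\sigma(y)}$ or $M_{x,y}=M_{\sigma^{-1}(x),\sigma^{-1}(y)}$ — but since $G_{x_0}$ is a group these two conditions are equivalent, so the conclusion is unaffected. One should also remark explicitly that $\Terw_4$ is closed under matrix multiplication (so that it is genuinely an algebra and the statement is not vacuous), which follows since $(MN)^\sigma = M^\sigma N^\sigma$ for the simultaneous row/column permutation action; this is consistent with, and in fact reproves, the chain of inclusions stated in the introduction.
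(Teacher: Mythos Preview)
Your proposal is correct and is precisely the ``direct calculations'' the paper alludes to (the paper's proof consists of the single sentence ``This is shown by direct calculations''). Your explicit verification that $M^\sigma=M$ for all $\sigma\in G_{x_0}$ is equivalent to constancy of the entries on $G_{x_0}$-orbits of $X\times X$, together with the disjoint-support argument for linear independence, is exactly what is needed.
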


\begin{proof}
  This is shown by direct calculations.
\end{proof}

\begin{lem}\label{basic3}
  The following statements are equivalent.
  \begin{enumerate}[(1)]
    \item $\Terw_3(\Gamma,x_0)=M_X(\C)$.
    \item $\Terw_4(\Gamma,x_0)=M_X(\C)$.
    \item $G_{x_0}=1$.
  \end{enumerate}
\end{lem}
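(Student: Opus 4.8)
The plan is to establish the cycle of implications $(1)\Rightarrow(2)\Rightarrow(3)\Rightarrow(1)$. The implication $(1)\Rightarrow(2)$ is immediate: by the chain $\Terw_3(\Gamma,x_0)\subset\Terw_4(\Gamma,x_0)\subset M_X(\C)$ recorded in the Introduction, if $\Terw_3$ already fills $M_X(\C)$ then so does $\Terw_4$.

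For $(2)\Rightarrow(3)$ I would argue by counting dimensions. By Lemma~\ref{basic4}, $\dim\Terw_4(\Gamma,x_0)$ equals the number $r$ of $G_{x_0}$-orbits on $X\times X$, while $\dim M_X(\C)=|X|^2=|X\times X|$. Hence $\Terw_4(\Gamma,x_0)=M_X(\C)$ forces every $G_{x_0}$-orbit on $X\times X$ to be a singleton; in particular every pair $(x,x)$ is fixed by $G_{x_0}$, so each vertex is fixed. Since $G_{x_0}\le\Aut(\Gamma)$ acts faithfully on $X$ (an automorphism is by definition a permutation of $X$), this gives $G_{x_0}=1$.

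For $(3)\Rightarrow(1)$, assume $G_{x_0}=1$. Then the $G_{x_0}$-orbits on $X$ are exactly the singletons, so each generator $E_{Y_i}$ of $\Terw_3(\Gamma,x_0)$ is a single diagonal matrix unit; thus $E_{x,x}\in\Terw_3(\Gamma,x_0)$ for all $x\in X$, and of course $A\in\Terw_3(\Gamma,x_0)$. A short computation gives $E_{x,x}\,A\,E_{y,y}=A_{x,y}E_{x,y}$, so $E_{x,y}\in\Terw_3(\Gamma,x_0)$ whenever $\{x,y\}\in E$. Finally, connectedness of $\Gamma$ lets us join arbitrary $x,y\in X$ by a walk $x=z_0,z_1,\dots,z_k=y$, and the telescoping product $E_{z_0,z_1}E_{z_1,z_2}\cdots E_{z_{k-1},z_k}=E_{x,y}$ shows that every matrix unit lies in $\Terw_3(\Gamma,x_0)$, whence $\Terw_3(\Gamma,x_0)=M_X(\C)$.

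I do not anticipate any real obstacle; the only point requiring a little care is the faithfulness of the $G_{x_0}$-action used in $(2)\Rightarrow(3)$, and this is built into the definition of a graph automorphism, so the whole argument is routine.
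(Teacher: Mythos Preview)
Your proof is correct and follows the same route as the paper: the chain of inclusions for $(1)\Rightarrow(2)$, Lemma~\ref{basic4} for $(2)\Rightarrow(3)$, and the triviality of the $G_{x_0}$-orbits for $(3)\Rightarrow(1)$. The paper merely asserts the implications as ``clear'' whereas you have written out the details (in particular the connectedness argument producing all matrix units), but there is no substantive difference.
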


\begin{proof}
  It is clear that (1) implies (2) and (3) implies (1).
  Also (2) implies (3), by Lemma \ref{basic4}, for example.
\end{proof}

In \cite[Theorem 4]{MR4147627}, it is shown that $G_{x_0}=1$ if and only if $\Terw_2(\Gamma,x_0)=M_X(\C)$ for trees.
This is not true, in general. For example, see Example \ref{ex4.3}.

\begin{lem}\label{lem:principal}
  Let $X=Y_1\cup\dots \cup Y_r$ be an arbitrary partition of the vertex set $X$ of a finite simple connected graph $\Gamma=(X,E)$.
  Suppose that $|Y_i|=1$.
  We set $\Terw=\C\langle A, E_{Y_1}, \dots, E_{Y_r} \rangle$.
  Then the dimension of the simple $\Terw$-module $E_{Y_i}\Terw$ is at least $r$.
  Moreover, if $|Y_i|=|Y_j|=1$, then $E_{Y_i}\Terw\cong E_{Y_j}\Terw$.
\end{lem}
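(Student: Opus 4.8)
The plan is to argue directly with matrix units in $M_X(\C)$. Relabel the parts so that $Y_i=\{x_0\}$; then $E_{Y_i}=E_{x_0,x_0}$ is a rank-one idempotent of $M_X(\C)$ lying in $\Terw$, hence primitive in $\Terw$. Since $\Terw$ is generated by the symmetric matrices $A,E_{Y_1},\dots,E_{Y_r}$, it is closed under transpose and complex conjugation, so it is semisimple (the same remark as for the $\Terw_\ell$ in the Introduction) and $E_{x_0}\Terw$ is a simple $\Terw$-module. For any $M\in\Terw$ the matrix $E_{x_0}M$ is supported on the single row $x_0$ and equals that row of $M$; thus $E_{x_0}M\mapsto(\text{row }x_0\text{ of }M)$ identifies $E_{x_0}\Terw$ with a subspace of $\C^X$, and it suffices to exhibit $r$ linearly independent elements of $E_{x_0}\Terw$.

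For each $j=1,\dots,r$ I would fix a vertex $y_j\in Y_j$, put $k_j=\partial(x_0,y_j)$, and consider
\[
 N_j:=E_{x_0}A^{k_j}E_{Y_j}=E_{x_0}\cdot\bigl(A^{k_j}E_{Y_j}\bigr)\in E_{x_0}\Terw .
\]
Since $\Gamma$ is connected, a geodesic from $x_0$ to $y_j$ is a walk of length $k_j$, so $(A^{k_j})_{x_0,y_j}\ge 1$ and in particular $N_j\ne O$. Every nonzero entry of $N_j$ lies in a column belonging to $Y_j$, and the $Y_j$ are pairwise disjoint, so $N_1,\dots,N_r$ have pairwise disjoint supports and are linearly independent in $M_X(\C)$. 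Hence $\dim_\C E_{x_0}\Terw\ge r$, which is the first assertion.

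For the second assertion, suppose $Y_i=\{x\}$ and $Y_j=\{y\}$ are both singletons, so $E_x,E_y\in\pii(\Terw)$. As recalled in Section~\ref{sec:prelim}, one has $E_x\Terw\cong E_y\Terw$ as soon as $E_x\Terw E_y\ne 0$, i.e.\ as soon as some element of $\Terw$ has nonzero $(x,y)$-entry. Taking $M=A^{\partial(x,y)}\in\Terw$ and using connectedness again, $(A^{\partial(x,y)})_{x,y}\ge 1$, whence $E_xA^{\partial(x,y)}E_y=(A^{\partial(x,y)})_{x,y}\,E_{x,y}\ne O$, and therefore $E_x\Terw\cong E_y\Terw$.

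The argument is essentially mechanical, and I do not expect a genuine obstacle; the only points needing a little care are that each $N_j$ really lies in the right ideal $E_{x_0}\Terw$ (group it as $E_{x_0}\cdot(A^{k_j}E_{Y_j})$ with the parenthesized factor in $\Terw$), that disjointness of the parts $Y_j$ forces linear independence of the $N_j$, and the standard equivalence $E_x\Terw E_y\ne 0\Leftrightarrow E_x\Terw\cong E_y\Terw$ for primitive idempotents used at the end.
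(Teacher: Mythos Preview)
Your proof is correct. The key input---that connectivity of $\Gamma$ forces $E_{Y_i}\Terw E_{Y_k}\ne 0$ for every $k$ via a suitable power of $A$---is the same as in the paper, and your second assertion is proved identically.

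The one difference is in how the dimension bound is extracted. You exhibit $r$ explicit elements $N_j=E_{x_0}A^{k_j}E_{Y_j}$ of $E_{x_0}\Terw$ with pairwise disjoint column supports, hence linearly independent. The paper instead argues structurally: since $E_{Y_i}\Terw E_{Y_k}\ne 0$ and $E_{Y_i}$ is primitive, each $E_{Y_k}$ contains a primitive idempotent equivalent to $E_{Y_i}$, so a primitive idempotent decomposition of $1$ contains at least $r$ idempotents equivalent to $E_{Y_i}$, whence $\dim E_{Y_i}\Terw\ge r$. Your route is slightly more hands-on and self-contained (it bypasses the background facts on equivalence classes of primitive idempotents), while the paper's route illustrates how the bound drops out of the Wedderburn picture already set up in Section~\ref{sec:prelim}. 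Both are short and clean.
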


\begin{proof}
  By the connectivity of $\Gamma$, $E_{Y_i}\Terw E_{Y_k}\ne 0$ for all $1\leq k\leq r$.
  Since $|Y_i|=1$, $E_{Y_i}$ is a primitive idempotent and every $E_{Y_k}$ contains an idempotent
  equivalent to $E_{Y_i}$.
  Thus $E_{Y_i}$ has at least $r$ equivalent idempotents in the primitive idempotent decomposition of $1$.
  The last statement is clear.
\end{proof}

For the distance partition $X=X_0\cup\dots\cup X_D$ with respect to the vertex $x_0$, $X_0=\{x_0\}$ and $|X_0|=1$. 
We can apply Lemma \ref{lem:principal} and the simple module $E_{X_0}\Terw_2$ is called the \emph{principal module}.

\section{The structure of $\Terw_1$}\label{sec:T1}

In this section, we consider the structure of $\Terw_1 =\C \langle A, E_{x_0} \rangle$.

The next lemma is clearly holds.

\begin{lem}
  For $\bs{v} = (v_x)_{x\in X}$, $\bs{v} E_{x_0}= \bs{0}$ if and only if $v_{x_0} = 0$.
\end{lem}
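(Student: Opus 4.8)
The plan is to prove both directions by a direct entrywise computation of the row vector $\bs{v} E_{x_0}$, using only the definition of $E_{x_0}$ as the matrix unit $E_{x_0,x_0}$ whose sole nonzero entry is a $1$ in position $(x_0,x_0)$.

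First I would fix an arbitrary index $y \in X$ and compute the $y$-th coordinate of $\bs{v} E_{x_0}$ from the definition of matrix multiplication:
\[
  (\bs{v} E_{x_0})_y = \sum_{x \in X} v_x (E_{x_0})_{x,y}.
\]
Since $(E_{x_0})_{x,y} = 1$ when $x = y = x_0$ and $(E_{x_0})_{x,y} = 0$ otherwise, the only possibly nonzero term in this sum is the one with $x = x_0$, and it contributes only when $y = x_0$. Hence $(\bs{v} E_{x_0})_y = v_{x_0}$ if $y = x_0$, and $(\bs{v} E_{x_0})_y = 0$ if $y \neq x_0$. In other words, $\bs{v} E_{x_0}$ is the vector all of whose coordinates vanish except possibly the $x_0$-coordinate, which equals $v_{x_0}$.

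From this description both implications are immediate: if $v_{x_0} = 0$ then every coordinate of $\bs{v} E_{x_0}$ is $0$, so $\bs{v} E_{x_0} = \bs{0}$; conversely, if $\bs{v} E_{x_0} = \bs{0}$ then in particular its $x_0$-coordinate $v_{x_0}$ is $0$. There is no real obstacle here — the only thing to be careful about is the convention that $\bs{v}$ is a \emph{row} vector acting on $M_X(\C)$ by right multiplication (consistent with the ``standard module'' convention fixed in the Introduction), so that it is the columns of $E_{x_0}$, and not its rows, that govern the computation.
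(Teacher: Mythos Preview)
Your proof is correct and is exactly the direct entrywise computation one would expect; the paper itself offers no proof beyond the remark that the lemma ``clearly holds,'' so your argument is just a spelled-out version of that claim.
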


\begin{prop}\label{one1}
  Let $\Gamma = (X, E)$ be a simple connected graph with $|X| = n$.
  Fix a base vertex $x_0 \in X$.
  Suppose that $\lambda$ is an eigenvalue of the adjacency matrix $A$ of $\Gamma$, and
  $\bs{v}$ is a corresponding eigenvector.
  If the $x_0$-th entry of $\bs{v}$ is zero, then $\C\bs{v}$ is a simple $\Terw_1(\Gamma, x_0)$-module
  not isomorphic to $E_{x_0} \Terw_1(\Gamma, x_0)$ as a $\Terw_1(\Gamma, x_0)$-module.
  If $\bs{w}$ is an eigenvector corresponding to $\mu$ different from $\lambda$ with the $x_0$-th entry zero,
  then $\C \bs{w}$ is non-isomorphic to $\C \bs{v}$ as a $\Terw_1(\Gamma, x_0)$-module.
\end{prop}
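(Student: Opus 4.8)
The plan is to work directly with the action of $\Terw_1 = \C\langle A, E_{x_0}\rangle$ on the vector $\bs v$. First I would observe that since $\bs v A = \lambda \bs v$ (using the right-module convention of the paper) and $\bs v E_{x_0} = \bs 0$ by the previous lemma and the hypothesis that the $x_0$-th entry of $\bs v$ vanishes, the one-dimensional space $\C\bs v$ is invariant under both generators of $\Terw_1$, hence is a $\Terw_1$-submodule of the standard module $\C X$. A one-dimensional module is automatically simple, so $\C\bs v$ is a simple $\Terw_1$-module. On this module $A$ acts as the scalar $\lambda$ and $E_{x_0}$ acts as $0$.

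Next I would distinguish $\C\bs v$ from the principal module $E_{x_0}\Terw_1$. The cleanest way is to note that $E_{x_0}$ acts as the zero scalar on $\C\bs v$, whereas $E_{x_0}$ does not act as zero on $E_{x_0}\Terw_1$: indeed $E_{x_0} = E_{x_0}\cdot E_{x_0} \in E_{x_0}\Terw_1$ is a nonzero element annihilated by nothing, and more precisely $E_{x_0}$ acts on $E_{x_0}\Terw_1$ with $E_{x_0}\cdot E_{x_0} = E_{x_0}\neq 0$, so the image of $E_{x_0}$ in $\End(E_{x_0}\Terw_1)$ is nonzero. Since isomorphic modules give the same action of any algebra element, $\C\bs v \not\cong E_{x_0}\Terw_1$. (Alternatively, one can invoke that $E_{x_0}\Terw_1$ is the principal module of dimension $\ge 1$ with $x_0$-coordinate detecting it, but the scalar-action argument is the shortest.)

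For the last statement, suppose $\bs w$ is an eigenvector for $\mu \neq \lambda$, also with vanishing $x_0$-th entry; by the same reasoning $\C\bs w$ is a simple $\Terw_1$-module on which $A$ acts as the scalar $\mu$. If $\C\bs v \cong \C\bs w$ as $\Terw_1$-modules, then $A$ would act as the same scalar on both, forcing $\lambda = \mu$, a contradiction. Hence $\C\bs v \not\cong \C\bs w$.

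I do not anticipate a genuine obstacle here — everything reduces to the fact that an isomorphism of modules intertwines the action of each algebra element, so the eigenvalue of $A$ and the ($0$ versus nonzero) action of $E_{x_0}$ are module invariants. The only point requiring a little care is the right-module versus left-module convention: one must use $\bs v A = \lambda \bs v$ consistently with the paper's convention that $\Terw_1$ acts on $\C X$ by right multiplication, and check that the hypothesis ``$x_0$-th entry zero'' is exactly what makes $\C\bs v$ closed under right multiplication by $E_{x_0}$ (via the preceding lemma). I would state these two invariance observations explicitly and then the three conclusions follow in one line each.
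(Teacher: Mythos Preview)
Your proposal is correct and follows essentially the same approach as the paper's proof: both verify that $\C\bs v$ is closed under the generators $A$ and $E_{x_0}$, then distinguish $\C\bs v$ from $E_{x_0}\Terw_1$ by the (zero versus nonzero) action of $E_{x_0}$, and distinguish $\C\bs v$ from $\C\bs w$ by the scalar action of $A$. Your write-up is a bit more explicit about conventions, but the argument is the same.
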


\begin{proof}
  Since $\bs{v}$ is an eigenvector of $A$, $\bs{v} A = \lambda \bs{v} \in \C \bs{v}$.
  By assumption, $\bs{v} E_{x_0} = \bs{0}$.
  By $\Terw_1 = \C \langle A, E_{x_0} \rangle$,  $\C \bs{v}$ is a $\Terw_1$-module.
  The idempotent $E_{x_0}$ acts on $E_{x_0} \Terw_1$ as nonzero and on $\C \bs{v}$ as $0$.
  Thus $\C \bs{v}$ is not isomorphic to $E_{x_0} \Terw_1$.
  The modules  $\C \bs{v}$ and $\C \bs{w}$ are non-isomorphic since the actions of $A$ are different.
\end{proof}

\begin{cor}\label{one2}
  Suppose that an eigenvalue $\lambda$ of $A$ has the eigenspace $V_\lambda$ of dimension $d_{\lambda}$.
  Set $W_\lambda := \{\bs{v} \in V_\lambda: \bs{v} E_{x_0} = \bs{0}\}$.
  Then $\dim W_\lambda = d_{\lambda} -1$ or $d_{\lambda}$ and $W_\lambda$
  is a direct sum of isomorphic $1$-dimensional simple $\Terw_1(\Gamma, x_0)$-modules.
\end{cor}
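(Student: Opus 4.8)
The plan is to recognize $W_\lambda$ as the kernel of a single linear functional on $V_\lambda$ and then to observe that $A$ and $E_{x_0}$ act on it by scalars, so that it splits as a sum of one-dimensional isomorphic simple modules along any vector-space basis.

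First I would consider the $\C$-linear map $\varphi\colon V_\lambda \to \C$ sending $\bs{v} = (v_x)_{x \in X}$ to its $x_0$-th entry $v_{x_0}$. By the lemma preceding Proposition \ref{one1}, $\bs{v} E_{x_0} = \bs{0}$ exactly when $v_{x_0} = 0$, so $W_\lambda = \Ker \varphi$. The image of $\varphi$ is a subspace of $\C$, hence has dimension $0$ or $1$, and therefore $\dim W_\lambda = d_\lambda$ (when no eigenvector in $V_\lambda$ has nonzero $x_0$-entry) or $d_\lambda - 1$ (otherwise).

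Next I would verify that $W_\lambda$ is a $\Terw_1(\Gamma, x_0)$-submodule of the standard module $\C X$: since $W_\lambda \subseteq V_\lambda$, right multiplication by $A$ sends each $\bs{v} \in W_\lambda$ to $\lambda \bs{v} \in W_\lambda$, and right multiplication by $E_{x_0}$ sends each $\bs{v} \in W_\lambda$ to $\bs{0} \in W_\lambda$; as $\Terw_1(\Gamma, x_0) = \C\langle A, E_{x_0} \rangle$, the subspace $W_\lambda$ is invariant. The same computation shows that every line $\C\bs{v}$ with $\bs{0} \neq \bs{v} \in W_\lambda$ is itself a $\Terw_1(\Gamma, x_0)$-submodule, necessarily simple because it is one-dimensional; and any two such lines are isomorphic as $\Terw_1(\Gamma, x_0)$-modules, since on each of them $A$ acts as $\lambda$ and $E_{x_0}$ acts as $0$ (this is the content already extracted in the proof of Proposition \ref{one1}).

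Finally I would fix any $\C$-basis $\bs{v}_1, \dots, \bs{v}_m$ of $W_\lambda$, where $m = \dim W_\lambda$; then $W_\lambda = \C\bs{v}_1 \oplus \dots \oplus \C\bs{v}_m$ is a direct sum of submodules, each a one-dimensional simple $\Terw_1(\Gamma, x_0)$-module, and all mutually isomorphic, which is the assertion (with the degenerate case $m = 0$ reading as $W_\lambda = 0$). I do not anticipate a genuine obstacle here: the only point deserving care is the dichotomy for $\dim W_\lambda$, which is just the statement that the rank of the evaluation functional $\varphi$ is $0$ or $1$.
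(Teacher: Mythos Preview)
Your proof is correct and follows essentially the same route as the paper's: the paper obtains the dimension dichotomy by noting that $W_\lambda$ is the intersection of $V_\lambda$ with the hyperplane $\{\bs{v}\in\C X:\bs{v}E_{x_0}=\bs{0}\}$, which is just a rephrasing of your kernel-of-a-functional argument, and then decomposes $W_\lambda$ along an arbitrary basis into isomorphic one-dimensional simple modules exactly as you do, citing Proposition~\ref{one1}. Your version is slightly more explicit in spelling out why the lines are mutually isomorphic (same action of $A$ and $E_{x_0}$), but the content is the same.
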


\begin{proof}
  Since $\{\bs{v} \in \C X: \bs{v} E_{x_0} = \bs{0}\}$ has dimension $n-1$, $\dim W_\lambda = d_{\lambda} -1$ or $d_{\lambda}$.
  For an arbitrary basis $\{ \bs{v}_i \}$ of $W_\lambda$,
  $\C\bs{v}_i$'s are isomorphic simple $\Terw_1$-modules
  by Proposition \ref{one1} and $W_\lambda=\bigoplus_i \C\bs{v}_i$.
\end{proof}

\begin{prop}\label{one3}
  Set $D = \max\{\partial(x_0,y): y \in X\}$, the diameter with respect to $x_0$.
  Then $\dim E_{x_0} \Terw_1(\Gamma, x_0)\geq D+1$.
\end{prop}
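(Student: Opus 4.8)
The plan is to exhibit a chain of vectors inside $E_{x_0}\Terw_1$ that forces its dimension to be at least $D+1$. For each $k=0,1,\dots,D$ pick a vertex $y_k\in X_k$, i.e. with $\partial(x_0,y_k)=k$; such a vertex exists by the definition of $D$. The natural candidates are the vectors $\bs{w}_k := \bs{e}_{x_0}A^k$, where $\bs{e}_{x_0}$ is the row vector of the standard basis corresponding to $x_0$. Since $\bs{e}_{x_0}=\bs{e}_{x_0}E_{x_0}$ and $A^k\in\Terw_1$, each $\bs{w}_k$ lies in $E_{x_0}\Terw_1$ (viewing $E_{x_0}\Terw_1$ as the row space of $\Terw_1$ acting on $\C X$ from the left by $\bs{v}\mapsto\bs{v}M$). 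So it suffices to show $\bs{w}_0,\bs{w}_1,\dots,\bs{w}_D$ are linearly independent.

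The key step is the standard fact (cited in the excerpt as \cite[Lemma 2.5]{MR1271140}) that the $(x_0,y)$-entry of $A^k$ counts walks of length $k$ from $x_0$ to $y$. Hence the $y$-coordinate of $\bs{w}_k$ is zero whenever $\partial(x_0,y)>k$, and is strictly positive when $\partial(x_0,y)=k$ (a shortest path gives at least one walk). Evaluating at the chosen vertices: $(\bs{w}_k)_{y_j}=0$ for $j>k$ and $(\bs{w}_k)_{y_k}>0$. Thus the $(D+1)\times(D+1)$ matrix $\big((\bs{w}_k)_{y_j}\big)_{k,j=0}^{D}$ is lower triangular with nonzero diagonal entries, so it is invertible; in particular the rows $\bs{w}_0,\dots,\bs{w}_D$ are linearly independent. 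Therefore $\dim E_{x_0}\Terw_1\ge D+1$.

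I expect the only real subtlety to be bookkeeping about which side the algebra acts on: $E_{x_0}\Terw_1$ is a right $\Terw_1$-module but also a left-$E_{x_0}$ cut of $\Terw_1$, and its elements are matrices $E_{x_0}M$ with $M\in\Terw_1$; the dimension I want is $\dim\{E_{x_0}M : M\in\Terw_1\}$, which equals $\dim\{\bs{e}_{x_0}M : M\in\Terw_1\}$ since $E_{x_0}M$ has all rows zero except the $x_0$-th, and that row is exactly $\bs{e}_{x_0}M$. With that identification in place, the triangularity argument above finishes the proof, and no further computation is needed.
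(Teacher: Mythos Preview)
Your proof is correct and follows essentially the same approach as the paper: both exhibit the elements $E_{x_0}A^k$ (or equivalently the row vectors $\bs{e}_{x_0}A^k$) for $k=0,\dots,D$ and argue linear independence via the triangular support pattern coming from the walk-counting interpretation of $A^k$. The paper phrases this slightly more tersely (noting that $E_{x_0}A^i$ contains $E_{x_0,y}$ with $\partial(x_0,y)=i$ but none with $\partial(x_0,y)>i$), whereas you make the triangularity explicit by evaluating at chosen vertices $y_j$, but the underlying idea is identical.
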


\begin{proof}
  For $i=0,\dots, D$, we consider $E_{x_0}A^i$.
  The matrix $E_{x_0}A^i$ contains $E_{x_0,y}$ with $\partial(x_0,y)=i$ and no $E_{x_0,y}$ with $\partial(x_0,y)>i$. 
  Thus $\{E_{x_0}A^i: i=0,1,\dots,D\}$ is a linearly independent set, and so $\dim E_{x_0} \Terw_1 \geq D+1$.
\end{proof}

The next result can be applied to $\Terw_1$ for distance-regular graphs.

\begin{thm}\label{one4}
  Let $\Gamma = (X,E)$ be a simple connected graph with diameter $D$ with respect to the base vertex $x_0$.
  Suppose that the graph has exactly $D+1$ eigenvalues $\lambda_0, \dots, \lambda_D$
  with multiplicities $m_0, \dots, m_D$, respectively.
  Set $t := \sharp\{i: m_i>1\}$.
  Then $\Terw_1(\Gamma, x_0) \cong M_{D+1}(\C) \oplus \bigoplus_{i=1}^t \C$.
  Especially, $\dim \Terw_1(\Gamma, x_0) = (D+1)^2 + t$.
\end{thm}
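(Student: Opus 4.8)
The plan is to combine the eigenvalue analysis of Propositions \ref{one1}--\ref{one3} with a dimension count. Since $\Gamma$ has exactly $D+1$ distinct eigenvalues, $\Terw_0 = \C\langle A\rangle$ has dimension $D+1$ by Theorem \ref{basic0}; concretely, $\C X$ decomposes as an $A$-module into the $D+1$ eigenspaces $V_{\lambda_0}, \dots, V_{\lambda_D}$. For each $i$, apply Corollary \ref{one2}: the subspace $W_{\lambda_i} = \{\bs{v}\in V_{\lambda_i} : \bs{v}E_{x_0}=\bs 0\}$ has dimension $m_i$ or $m_i-1$ and is a direct sum of mutually isomorphic $1$-dimensional simple $\Terw_1$-modules, one isomorphism type per eigenvalue $\lambda_i$ (distinct types for distinct $i$ by the last sentence of Proposition \ref{one1}). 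The first key step is to show that in fact $\dim W_{\lambda_i} = m_i - 1$ for every $i$: the standard vector $\bs{e}_{x_0}$ has a nonzero component in each eigenspace (otherwise it would be orthogonal to $V_{\lambda_i}$, but since $\Gamma$ is connected and has $D+1$ eigenvalues the Krylov vectors $\bs{e}_{x_0}A^j$, $j=0,\dots,D$, are linearly independent by the argument of Proposition \ref{one3}, hence span a $(D+1)$-dimensional $A$-invariant subspace, which must meet every eigenspace nontrivially and therefore forces a nonzero projection). So $E_{x_0}$ does not annihilate $V_{\lambda_i}$, giving $\dim W_{\lambda_i}=m_i-1$.

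The second step is to identify the ``principal'' part. Let $W = \bigoplus_i W_{\lambda_i}$; this has dimension $\sum_i (m_i-1) = n - (D+1)$, and it is a semisimple $\Terw_1$-module all of whose constituents are $1$-dimensional and are annihilated by $E_{x_0}$. Exactly $t$ of the eigenvalues have $m_i>1$, so $W$ contributes $t$ distinct $1$-dimensional simple modules, i.e. $t$ blocks $\C$ in the Wedderburn decomposition of $\Terw_1$, and these exhaust all simple $\Terw_1$-modules on which $E_{x_0}$ acts as zero (any such module, being a quotient of $\C X / \C X E_{x_0} \cdot \Terw_1$... more carefully: a simple module $S$ with $SE_{x_0}=0$ is a module for $\Terw_1/\Terw_1 E_{x_0}\Terw_1$; but one checks $\C\langle A, E_{x_0}\rangle$ modulo the ideal generated by $E_{x_0}$ is a quotient of $\C\langle A\rangle$, hence commutative, so $S$ is $1$-dimensional and is one of the $W_{\lambda_i}$-types). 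The complement is the principal module $E_{x_0}\Terw_1$: since $E_{x_0}$ is a primitive idempotent of $M_X(\C)$ it is primitive in $\Terw_1$, so $E_{x_0}\Terw_1$ is a single simple $\Terw_1$-module, and it is not isomorphic to any constituent of $W$ because $E_{x_0}$ acts nonzero on it. Thus $\Terw_1$ has exactly $t+1$ isomorphism classes of simple modules.

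For the third step, count dimensions. Writing $\Terw_1 \cong M_{d}(\C)\oplus\bigoplus_{i=1}^{t}\C$ where $d = \dim E_{x_0}\Terw_1$ is the dimension of the principal simple module, the regular module $\C X$ decomposes with multiplicities equal to the dimensions of the simples, so $n = d + \sum_i (\text{mult of }i\text{-th }1\text{-dim simple})$. Each $1$-dimensional type appears with multiplicity $\dim W_{\lambda_i} = m_i - 1$ for the $t$ eigenvalues with $m_i > 1$; summing, $\sum (m_i-1) = n - (D+1)$ over those $t$ indices equals $n - (D+1)$ provided the $D+1-t$ eigenvalues with $m_i=1$ contribute nothing to $W$, which is automatic. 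Hence $d = n - (n-(D+1)) = D+1$, giving $\Terw_1 \cong M_{D+1}(\C)\oplus\bigoplus_{i=1}^t\C$ and $\dim\Terw_1 = (D+1)^2 + t$. (Equivalently, $\dim E_{x_0}\Terw_1 = D+1$ follows directly from Proposition \ref{one3} together with the fact that a $d$-dimensional simple module of a semisimple algebra embeds in the regular module with multiplicity $d$, so $d^2 \le n - (n-(D+1)) $ forces $d=D+1$.)

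The main obstacle is the first step: proving $E_{x_0}$ has nonzero projection onto every eigenspace, equivalently that the minimal polynomial of $A$ relative to the cyclic vector $\bs e_{x_0}$ has degree exactly $D+1$. This is where the hypothesis that $\Gamma$ has the minimal possible number $D+1$ of eigenvalues is essential, and it is exactly the Krylov/linear-independence argument already used in Proposition \ref{one3}, combined with the fact that $\Terw_0\subseteq\Terw_1$ has dimension $D+1$ — so the proof should cite those two facts rather than redo them.
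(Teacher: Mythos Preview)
Your overall strategy is sound and close to the paper's, but Step 3 contains a genuine error. The standard module $\C X$ is \emph{not} the regular $\Terw_1$-module, and your claim that ``$\C X$ decomposes with multiplicities equal to the dimensions of the simples'' is false in general. (If it were the regular module you would get $n=d^2+t$, not $n=d+\sum(\text{mults})$.) The parenthetical version is worse: $d^2\le n-(n-(D+1))=D+1$ together with $d\ge D+1$ would force $d\le 1$. What you actually need is that the multiplicity of the principal module $E_{x_0}\Terw_1$ in $\C X$ equals $1$; the paper gets this in one line from $\dim(\C X\cdot E_{x_0})=\operatorname{rank}(E_{x_0})=1$. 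Alternatively, your own Steps 1--2 already give it: you showed $\dim W=n-(D+1)$ and that every simple submodule of $\C X$ annihilated by $E_{x_0}$ lies in $W$, so $\C X/W$ has dimension $D+1$ and is a sum of copies of the principal module; since each copy has dimension $d\ge D+1$ (Proposition~\ref{one3}), there is exactly one copy and $d=D+1$.

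One methodological difference worth noting: the paper never proves your ``first key step'' ($\dim W_{\lambda_i}=m_i-1$ for every $i$) separately. Instead it leaves $m_i':=\dim W_{\lambda_i}\in\{m_i-1,m_i\}$ undetermined and runs a squeeze
\[
n=(D+1)+\sum_i(m_i-1)\ \le\ \dim E_{x_0}\Terw_1+\sum_{m_i>1}m_i'\ \le\ n,
\]
using Proposition~\ref{one3} on the left and the rank-$1$ multiplicity fact on the right; equality then yields $d=D+1$, $m_i'=m_i-1$, and the complete list of simples simultaneously. Your Krylov/cyclic-vector argument for $m_i'=m_i-1$ is correct and is a pleasant alternative, but once you have it you should finish with the clean multiplicity argument above rather than the regular-module claim.
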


\begin{proof}
  By Proposition \ref{one1} and Corollary \ref{one2},
  there are $t$ non-isomorphic simple $\Terw_1$-modules $S_i$ for $m_i>1$.
  Set $m_i'$ the multiplicity of $S_i$ in $\C X$.
  Note that $m_i'=m_i$ or $m_i-1$.

  We consider $E_{x_0} \Terw_1$.
  By Proposition \ref{one3}, $\dim E_{x_0} \Terw_1 \geq D+1$.
  Since the rank of $E_{x_0}$ is one, the multiplicity of $E_{x_0} \Terw_1$ in $\C X$ is one.
  Now, considering that $\C X$ contains a submodule isomorphic to
  $E_{x_0} \Terw_1 \oplus \bigoplus_{m_i>1} m_i' S_i$, we have
\[
 n = \sum_{i=0}^D m_i = (D+1) + \sum_{i=0}^D (m_i-1) \leq \dim E_{x_0} \Terw_1 + \sum_{m_i>1} m_i' \leq n.
\]
  This shows that $\dim E_{x_0} \Terw_1 = D+1$ and
  $\{E_{x_0} \Terw_1\} \cup \{S_i: m_i>1\}$ is the set of all representatives of simple $\Terw_1$-modules.
\end{proof}

\section{Terwilliger algebras $\Terw_2$ of strongly regular graphs}\label{sec:T2SRG}

The next proposition is essentially proved in \cite[Theorem 5.1]{MR523457}.
See also \cite{MR1294534}.

\begin{prop}\label{SRG}
  Let $\Gamma=(X,E)$ be an $(n,k,\lambda,\mu)$-strongly regular graph.
  Fix a vertex $x_0\in X$.
  Let $X=X_0\cup X_1\cup X_2$ be the distance partition of $X$ with respect to $x_0$.
  Then $\C \langle E_{X_1} A E_{X_1}, E_{X_1} J_X E_{X_1}\rangle = E_{X_1} \Terw_2(\Gamma, x_0) E_{X_1}$.
  Especially,  $E_{X_1} \Terw_2(\Gamma, x_0) E_{X_1}$ is a commutative algebra.
  (The same things hold also for $X_2$.)
\end{prop}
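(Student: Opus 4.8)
The plan is to show both inclusions between the algebra $\C\langle E_{X_1}AE_{X_1}, E_{X_1}J_XE_{X_1}\rangle$ and the corner algebra $E_{X_1}\Terw_2 E_{X_1}$, and then derive commutativity. The inclusion ``$\subseteq$'' is immediate: $A, J_X = \sum_k A^k/(\text{coefficients})$... actually $J_X$ is not obviously in $\Terw_0$, so instead note $J_X = \sum_{i,j} E_{X_i}AE_{X_j} + \sum_i E_{X_i}$-type expressions are clumsy; cleaner is to observe that for a strongly regular graph $A^2 = kI + \lambda A + \mu(J_X - I - A)$, hence $J_X \in \C\langle A\rangle = \Terw_0 \subseteq \Terw_2$. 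Therefore $E_{X_1}AE_{X_1}$ and $E_{X_1}J_XE_{X_1}$ both lie in $E_{X_1}\Terw_2 E_{X_1}$, and since the latter is an algebra with identity $E_{X_1}$, the generated subalgebra is contained in it.

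The substantive direction is ``$\supseteq$'', for which I would use the fact that $\Terw_2$ is spanned, as a vector space, by products of the generators $A, E_{X_0}, E_{X_1}, E_{X_2}$. By absorbing the idempotents, a spanning set for $\Terw_2$ is given by matrices of the form $E_{X_i}A^{a_1}E_{X_{j_1}}A^{a_2}E_{X_{j_2}}\cdots A^{a_m}E_{X_j}$. Squeezing by $E_{X_1}$ on both sides, it suffices to show every product $E_{X_1}A E_{X_{j_1}}A E_{X_{j_2}}\cdots A E_{X_1}$ (each internal power of $A$ can be taken to be $1$ since $A^2$ is a linear combination of $I, A, J_X$ and $J_X = \alpha I + \beta A + \gamma A^2$ lets us rewrite everything, but more directly: each $E_{X_a}A^2 E_{X_b}$ is a linear combination of $E_{X_a}E_{X_b}$, $E_{X_a}AE_{X_b}$, and $E_{X_a}J_XE_{X_b}$) lies in $\C\langle E_{X_1}AE_{X_1}, E_{X_1}J_XE_{X_1}\rangle$. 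The key computational inputs are the three-term-recurrence-type identities for strongly regular graphs: for each pair $(a,b)\in\{0,1,2\}^2$, the matrix $E_{X_a}AE_{X_b}$ is one of a short list, and crucially $E_{X_a}AE_{X_b}\cdot E_{X_b}AE_{X_c}$ can be expressed using $E_{X_1}AE_{X_1}$, $E_{X_1}J_XE_{X_1}$ and the blocks $E_{X_a}AE_{X_c}$ via the regularity conditions (counting common neighbors). Iterating, any such product collapses into the subalgebra generated by $E_{X_1}AE_{X_1}$ and $E_{X_1}J_XE_{X_1}$; this is the content of \cite[Theorem 5.1]{MR523457}, and I would either cite it or reproduce the block-matrix bookkeeping.

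Once the equality of algebras is established, commutativity follows from a dimension/structure count: $E_{X_1}\Terw_2E_{X_1}$ is generated by the two matrices $E_{X_1}AE_{X_1}$ and $E_{X_1}J_XE_{X_1}$, and one checks these commute. Indeed $E_{X_1}J_XE_{X_1}$ is, up to scalar, the all-ones matrix $J$ on the block indexed by $X_1$, so $(E_{X_1}AE_{X_1})(E_{X_1}J_XE_{X_1})$ has $(x,y)$-entry equal to the number of neighbors of $x$ inside $X_1$, which by strong regularity is the constant $\lambda$ for every $x\in X_1$ (each $x\in X_1$ is adjacent to $x_0$, so its neighbors in $X_1$ are exactly the common neighbors of $x$ and $x_0$); hence this product equals $\lambda\,E_{X_1}J_XE_{X_1}$, and symmetrically for the other order, giving commutativity. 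The same argument with $\mu$ in place of $\lambda$ handles the $X_2$-corner: a vertex in $X_2$ is nonadjacent to $x_0$, so its neighbors in $X_1$ number $\mu$, and its neighbors in $X_2$ number $k-\mu$, which keeps all the relevant products scalar multiples of blocks already in the generated algebra.

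\textbf{Main obstacle.} The bookkeeping in the ``$\supseteq$'' direction is the real work: one must verify that \emph{arbitrary-length} alternating products $E_{X_1}AE_{X_{j_1}}AE_{X_{j_2}}\cdots AE_{X_1}$ do not produce anything new, which requires systematically listing the nine blocks $E_{X_a}AE_{X_b}$, establishing the multiplication rules among them from the strongly-regular parameters, and checking closure. Since the excerpt explicitly points to \cite[Theorem 5.1]{MR523457} and \cite{MR1294534}, the cleanest route is to invoke that reference for the algebra equality and spend the new effort only on the ``especially'' clause, i.e.\ the explicit commutativity computation via the $\lambda$ and $\mu$ neighbor counts sketched above.
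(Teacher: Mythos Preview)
Your proposal is correct and follows essentially the same route as the paper: establish $\subseteq$ via $J_X\in\C\langle A\rangle$ from the SRG relation $A^2=kI+\lambda A+\mu(J-I-A)$, then prove $\supseteq$ by reducing alternating products $E_{X_1}AE_{X_{s_1}}\cdots AE_{X_1}$, and read off commutativity from $A_{(1,1)}J_{(1,1)}=J_{(1,1)}A_{(1,1)}=\lambda J_{(1,1)}$.

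The one place where the paper is sharper than your sketch is the mechanism for the $\supseteq$ direction, which you leave as ``block-matrix bookkeeping'' or a citation. The paper's reduction is cleaner than a nine-block closure check: first split any product at every internal index $s_u=1$, so one may assume all $s_u\in\{0,2\}$; then $A_{(0,0)}=A_{(0,2)}=A_{(2,0)}=O$ forces the only surviving nontrivial words to be $E_{X_1}AE_{X_0}AE_{X_1}=J_{(1,1)}$ and $A_{(1,2)}A_{(2,2)}^{\,m}A_{(2,1)}$. The key identity, obtained by reading the SRG relation in the $(1,2)$-block and using $\mu J_{(1,2)}=J_{(1,1)}A_{(1,2)}$, is a \emph{commutation-past} relation
\[
A_{(1,2)}A_{(2,2)} \;=\; g\bigl(A_{(1,1)},J_{(1,1)}\bigr)\,A_{(1,2)}
\]
for an explicit polynomial $g$, while the $(1,1)$-block gives $A_{(1,2)}A_{(2,1)}=h(A_{(1,1)},J_{(1,1)})$. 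Hence $A_{(1,2)}A_{(2,2)}^{\,m}A_{(2,1)}=g^{\,m}h\in\C\langle A_{(1,1)},J_{(1,1)}\rangle$. This is exactly the ``collapse'' you allude to, but note it is multiplicative (pushing factors of $A_{(2,2)}$ leftward through $A_{(1,2)}$), not the additive reduction your phrasing ``expressed using \dots\ and the blocks $E_{X_a}AE_{X_c}$'' might suggest.
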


\begin{proof}
  It is clear that
  $\C \langle E_{X_1} A E_{X_1}, E_{X_1} J_X E_{X_1} \rangle \subset E_{X_1}\Terw_2(\Gamma, x_0)E_{X_1}$.

  For convenience, we write $A_{(i,j)}$ for $E_{X_i}AE_{X_j}$, $J_{(i,j)}$ for $E_{X_i}J_XE_{X_j}$,
  and $I_{(i,j)}$ for $E_{X_i}I_XE_{X_j}$.
  Remark that $A_{(1,1)}J_{(1,1)}=J_{(1,1)}A_{(1,1)}=\lambda J_{(1,1)}$ and
  $A_{(2,2)}J_{(2,2)}=J_{(2,2)}A_{(2,2)}=(k-\mu) J_{(2,2)}$, and so on.

  Since $E_{X_1} \Terw_2 E_{X_1}$ is generated by $E_{X_1} A E_{X_{s_1}} A E_{X_{s_2}} \cdots E_{X_{s_t}} A E_{X_1}$ for $0 \leq s_u \leq 2$, it is enough to show that such terms are in $\C \langle A_{(1,1)}, J_{(1,1)} \rangle$.
  We may assume that $s_u \neq 1$ for all $u$
  because we can divide the term into two parts if $s_u = 1$ for some $u$.
  Since $A_{(0,0)} = A_{(0,2)} = A_{(2,0)} = O$, this term is
  $E_{X_1} A E_{X_0} A E_{X_1} = J_{(1,1)}$ or $O$ if $s_u = 0$ for some $u$.
  Moreover, if $s_u = 2$ for some $u$, then this term is
  $A_{(1,2)} {A_{(2,2)}}^m A_{(2,1)}$ ($m = 0, 1, \dots$) or $O$.
Thus, we can see that $E_{X_1}\Terw_2(\Gamma, x_0)E_{X_1}$ is generated by
$A_{(1,1)}, A_{(1,2)}{A_{(2,2)}}^m A_{(2,1)}$ ($m=0,1,\dots$) and $J_{(1,1)}$.
  It is clear that $J_{(1,1)}, {A_{(1,1)}} \in \C \langle A_{(1,1)}, J_{(1,1)}\rangle$.
  We will show that $A_{(1,2)}{A_{(2,2)}}^m A_{(2,1)} \in \C \langle A_{(1,1)}, J_{(1,1)}\rangle$.

  By a known equation $A^2=kI+\lambda A+\mu (J-A-I)$ for an $(n,k,\lambda,\mu)$-strongly regular graph, we have
  \begin{eqnarray}
    A_{(1,1)}A_{(1,2)}+A_{(1,2)}A_{(2,2)} &=& (\lambda-\mu) A_{(1,2)}+\mu J_{(1,2)} \label{eq:srg1}\\
    J_{(1,1)}+A_{(1,1)}A_{(1,1)}+A_{(1,2)}A_{(2,1)} &=& (k-\mu) I_{(1,1)}+(\lambda-\mu) A_{(1,1)}+\mu J_{(1,1)}.\label{eq:srg2}
  \end{eqnarray}
  Since $\mu J_{(1,2)}= J_{(1,1)}A_{(1,2)}$, the equation (\ref{eq:srg1}) shows that
  $A_{(1,2)}A_{(2,2)}=g(A_{(1,1)},J_{(1,1)})A_{(1,2)}$ for some polynomial $g(x, y)$.
  Thus ${A_{(1,2)}A_{(2,2)}}^m = g(A_{(1,1)},J_{(1,1)})^m A_{(1,2)}$.
  The equation (\ref{eq:srg2}) shows that $A_{(1,2)}A_{(2,1)}=h(A_{(1,1)}, J_{(1,1)})$  for some polynomial $h(x,y)$.
  Now $A_{(1,2)}{A_{(2,2)}}^m A_{(2,1)}=g(A_{(1,1)},J_{(1,1)})^m h(A_{(1,1)}, J_{(1,1)})\in \C\langle A_{(1,1)}, J_{(1,1)}\rangle$.
  The proof is completed.  
\end{proof}

\begin{prop}\label{dimA2}
  Let $\Gamma=(X,E)$ be an $(n,k,\lambda,\mu)$-strongly regular graph.
  Fix a vertex $x_0\in X$.
  Then $\dim \Terw_2(\Gamma, x_0)\leq 2n+3$.
\end{prop}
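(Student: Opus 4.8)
The plan is to estimate $\dim \Terw_2(\Gamma,x_0)$ by decomposing the algebra according to the block structure $E_{X_i}\Terw_2 E_{X_j}$ for $i,j\in\{0,1,2\}$, since $I = E_{X_0}+E_{X_1}+E_{X_2}$ gives $\Terw_2 = \bigoplus_{i,j} E_{X_i}\Terw_2 E_{X_j}$ as a vector space. I would bound the dimension of each of these nine blocks separately and sum.

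First I would handle the "corner" blocks. We have $|X_0|=1$, so $\dim E_{X_0}\Terw_2 E_{X_0}\le 1$, and the blocks $E_{X_0}\Terw_2 E_{X_j}$ and $E_{X_i}\Terw_2 E_{X_0}$ each have dimension at most $1$ as well, because $E_{X_0}$ has rank $1$ (so $E_{X_0}\Terw_2 E_{X_j}$ is a subspace of the $|X_j|$-dimensional space of matrices supported on row $x_0$, but more to the point it is spanned by a single vector image — actually I would just note $\dim E_{X_0}\Terw_2 E_{X_j}\le |X_j|$ is too weak; better: since $E_{X_0}\Terw_2$ is a single simple module appearing with multiplicity one, but cleanly, $E_{X_0}\Terw_2 E_{X_1}$ is the image of the rank-one matrix space, so $\dim\le |X_1| = k$). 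This is where I need to be careful; the clean statement is that $E_{X_0}\Terw_2 E_{X_j}$ consists of matrices with support in the single row $x_0$, hence has dimension at most $|X_j|$, giving at most $1 + 2k + 2(n-k-1) = 2n-1$ from the four blocks touching $X_0$ — already more than the budget allows, so a naive row-count is insufficient. Instead I would use Proposition~\ref{SRG}: the key point is that $E_{X_0}\Terw_2 E_{X_1}$ is spanned by $E_{X_0} A E_{X_1}$ and $E_{X_0} J E_{X_1}$ alone (any word must pass through $X_1$ or $X_2$, and using $A^2 = kI+\lambda A + \mu(J-A-I)$ one reduces everything to these two), so each of the four blocks meeting $X_0$ has dimension at most $2$, except the $(0,0)$ block which has dimension $1$; that contributes at most $1 + 2+2+2+2 = 9$.

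The main work is then the four blocks among $X_1, X_2$. By Proposition~\ref{SRG}, $E_{X_1}\Terw_2 E_{X_1} = \C\langle A_{(1,1)}, J_{(1,1)}\rangle$ is commutative of dimension at most... well, as an algebra generated by two commuting symmetric matrices it could be large, but I would bound it by noting it is a commutative semisimple algebra on the $k$-dimensional space $\C X_1$, hence dimension at most $k$; similarly $E_{X_2}\Terw_2 E_{X_2}$ has dimension at most $n-k-1$. For the off-diagonal blocks $E_{X_1}\Terw_2 E_{X_2}$ and its transpose, the computation inside the proof of Proposition~\ref{SRG} shows (via equations (\ref{eq:srg1}), (\ref{eq:srg2})) that $E_{X_1}\Terw_2 E_{X_2}$ is spanned by $\{g(A_{(1,1)},J_{(1,1)})^m A_{(1,2)} : m\ge 0\}$ together with $J_{(1,2)}$, i.e. it equals $E_{X_1}\Terw_2 E_{X_1}\cdot A_{(1,2)} + \C J_{(1,2)}$ — a left module over $E_{X_1}\Terw_2 E_{X_1}$ generated by (at most) the two elements $A_{(1,2)}$ and $J_{(1,2)}$, though actually $J_{(1,2)}$ lies in $\C\langle A_{(1,1)},J_{(1,1)}\rangle A_{(1,2)}$ since $J_{(1,1)}A_{(1,2)} = \mu J_{(1,2)}$ and $\mu\ne 0$. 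Hence $E_{X_1}\Terw_2 E_{X_2}$ is cyclic over the commutative algebra $\C\langle A_{(1,1)},J_{(1,1)}\rangle$, so its dimension is at most $\dim \C\langle A_{(1,1)},J_{(1,1)}\rangle \le k$. Putting the nine bounds together: $1 + 2+2+2+2 + k + k + (n-k-1) + (n-k-1) = 2n + 5$, which slightly overshoots; the hard part will be tightening this. The saving I expect to need is that the two "principal module" blocks and diagonal blocks share structure — e.g., using that the principal module $E_{X_0}\Terw_2$ has dimension exactly $D+1 = 3$ so that $E_{X_0}\Terw_2 E_{X_0} + E_{X_0}\Terw_2 E_{X_1} + E_{X_0}\Terw_2 E_{X_2}$ totals only $3$, not $1+2+2$, and symmetrically $3$ for the blocks in the $x_0$-column, with the $(0,0)$ block counted once; that reduces the $X_0$-contribution from $9$ to $3 + 3 - 1 = 5$, yielding the claimed $2n + 3$. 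So the real obstacle is nailing down that $\dim E_{X_0}\Terw_2 = 3$ (which follows because $E_{X_0}AE_{X_2}=O$ forces $E_{X_0}\Terw_2$ to be spanned by $E_{X_0}$, $E_{X_0}A$, $E_{X_0}A^2$, the last three-term span having dimension exactly $3$ since the graph has diameter $2$ — or invoking Lemma~\ref{lem:principal} and Proposition~\ref{one3}-type reasoning), and then bookkeeping the nine blocks without double counting.
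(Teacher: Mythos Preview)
Your approach is correct and follows the same overall strategy as the paper: decompose $\Terw_2 = \bigoplus_{i,j=0}^{2} E_{X_i}\Terw_2 E_{X_j}$ and bound each of the nine blocks, invoking Proposition~\ref{SRG} for the commutativity of the diagonal pieces $E_{X_i}\Terw_2 E_{X_i}$ ($i=1,2$). The paper is terser about the five $X_0$-blocks --- it simply asserts $\dim E_{X_0}\Terw_2 E_{X_i} = \dim E_{X_i}\Terw_2 E_{X_0} = 1$ for $i=0,1,2$ --- but your route via $\dim E_{X_0}\Terw_2 = 3$ reaches the same total of $5$.

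The one substantive difference is how you bound the off-diagonal blocks $E_{X_1}\Terw_2 E_{X_2}$ and $E_{X_2}\Terw_2 E_{X_1}$. You argue that $E_{X_1}\Terw_2 E_{X_2} = (E_{X_1}\Terw_2 E_{X_1})\cdot A_{(1,2)}$ is cyclic as a left module, giving $\dim E_{X_1}\Terw_2 E_{X_2} \le d_1$ and symmetrically $\dim E_{X_2}\Terw_2 E_{X_1} \le d_2$. The paper instead uses the identification $E_{X_1}\Terw_2 E_{X_2} \cong \Hom_{\Terw_2}(E_{X_1}\Terw_2, E_{X_2}\Terw_2)$: since commutativity of $E_{X_i}\Terw_2 E_{X_i}$ forces $E_{X_i}\Terw_2$ to be a direct sum of $d_i$ pairwise \emph{non-isomorphic} simple modules, Schur's lemma yields $\dim E_{X_1}\Terw_2 E_{X_2} \le \min\{d_1,d_2\}$. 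The paper's bound is slightly sharper, but both give $5 + 2(d_1+d_2) \le 5 + 2(n-1) = 2n+3$. Your cyclic-module argument requires re-running the word-reduction from Proposition~\ref{SRG} on the $(1,2)$-block (which you sketch correctly, using \eqref{eq:srg1} and $\mu\ne 0$); the paper's $\Hom$ argument is cleaner and needs no further computation. A cosmetic note: your intermediate sum $1+2+2+2+2+k+k+(n{-}k{-}1)+(n{-}k{-}1)$ equals $2n+7$, not $2n+5$, but after reducing the $X_0$-contribution from $9$ to $5$ you still land correctly at $2n+3$.
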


\begin{proof}
  Set $d_i=\dim E_{X_i}\Terw_2 E_{X_i}$ for $i=1,2$.
  Since $E_{X_i}\Terw_2 E_{X_i}$ is a commutative semisimple algebra, $d_1\leq k$, $d_2\leq n-k-1$, and $E_{X_i}$ is a sum of $d_i$ non-equivalent primitive idempotents.
  Thus $E_{X_i}\Terw_2$ is a direct sum of $d_i$ non-isomorphic simple $\Terw_2$-modules.
  Now $E_{X_1}\Terw_2E_{X_2}\cong \Hom_{\Terw_2}(E_{X_1}\Terw_2, E_{X_2}\Terw_2)$ and thus $\dim E_{X_1}\Terw_2E_{X_2}\leq \min \{d_1,d_2\}$.
  Similarly we have  $\dim E_{X_2}\Terw_2E_{X_1}\leq \min\{d_1,d_2\}$.
  We also remark that $\dim E_{X_0}\Terw_2E_{X_i}=\dim E_{X_i}\Terw_2E_{X_0}=1$ for $i=0,1,2$.
  By $\dim \Terw_2=\sum_{0\leq i, j\leq 2}\dim E_{X_1}\Terw_2E_{X_2}$, we have
  $$\dim\Terw_2\leq5+d_1+d_2 + 2 \min\{d_1,d_2\} \leq 5+(n-1)+(n-1)=2n+3$$
  and the assertion holds.
\end{proof}

Our algebra $\Terw_2(\Gamma, x_0)$ is just a Terwilliger algebra.
A simple $\Terw_2(\Gamma, x_0)$-module $W$ is said to be \emph{thin} if $\dim WE_{X_i}\leq 1$ for all $i=0,1,\dots,D$
\cite[Section 3]{MR1203683}.
A (distance-regular) graph is said to be \emph{thin} with respect to $x_0$
if every irreducible $\Terw_2(\Gamma, x_0)$-module is thin.
Now $\dim WE_{X_i}=\dim \Hom(E_{X_i}\Terw_2(\Gamma, x_0), W)$ and the condition $\dim WE_{X_i}\leq 1$
means that the modules $E_{X_i}\Terw_2(\Gamma, x_0)$ contain at most one simple submodule isomorphic to $W$.
This condition is satisfied for all simple modules $W$ if $E_{X_i}\Terw_2(\Gamma, x_0)E_{X_i}$ ($i=1,2$) are commutative.
As a consequent of Proposition \ref{SRG}, we can say that every strongly regular graph is thin with respect to any vertex.
This fact was proved in  \cite[Lemma 3.3]{MR1294534}.

\begin{ex}\label{ex4.3}
  It is known that there are many strongly regular graphs with the trivial automorphism groups.
  By Lemma \ref{basic3}, $\dim \Terw_3=\dim \Terw_4=n^2$ for them.
  By Proposition \ref{dimA2}, $\dim \Terw_2\leq 2n+3$.
  Thus $\Terw_2\subsetneq \Terw_3$ holds for them if $n\geq 4$.
\end{ex}

\section{A base vertex of valency one}\label{sec:val1}

In this section, we consider the structures of $\Terw_2$ and $\Terw_3$ in the case that the valency of the base vertex $x_0$ is one.
There exists the unique neighbor $x_1$ of $x_0$.
We set $X' = X \setminus \{x_0\}$, $E' = E \setminus \{\{x_0,x_1\}\}$, and consider the graph $\Gamma' = (X', E')$ with the base vertex $x_1$.
Naturally, we can regard $M_{X'}(\C)$ as a subset of $M_X(\C)$.

\begin{lem}\label{lem:valencyone}
  $E_{X'} \Terw_\ell(\Gamma, x_0) E_{X'} = \Terw_\ell(\Gamma', x_1)$ for $\ell = 2, 3$. 
\end{lem}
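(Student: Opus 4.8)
The plan is to treat $\ell=2$ and $\ell=3$ uniformly. Let $X=Z_0\cup Z_1\cup\dots\cup Z_s$ be the relevant partition of $X$ with base vertex $x_0$ --- the distance partition when $\ell=2$, the $G_{x_0}$-orbit partition when $\ell=3$ --- so that $\Terw_\ell(\Gamma,x_0)=\C\langle A,E_{Z_0},\dots,E_{Z_s}\rangle$. In both cases $Z_0=\{x_0\}$, and since $x_1$ is the unique neighbour of $x_0$ the singleton $\{x_1\}$ is again a block, say $Z_1=\{x_1\}$ (for $\ell=2$ the vertices at distance $1$ from $x_0$ are exactly $\{x_1\}$; for $\ell=3$ every $\sigma\in G_{x_0}$ fixes the unique neighbour of $x_0$). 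The first task is to identify $Z_1,\dots,Z_s$ with the corresponding partition of $X'$ with base vertex $x_1$ in $\Gamma'$: for $\ell=2$ this follows from $\partial(x_0,x)=1+\partial(x_1,x)$ in $\Gamma$ and from the fact that distances among vertices of $X'$ are the same in $\Gamma$ and $\Gamma'$, both because no shortest walk can pass through the valency-one vertex $x_0$ except as an endpoint; for $\ell=3$, restriction $\sigma\mapsto\sigma|_{X'}$ is a group isomorphism $G_{x_0}\cong\Aut(\Gamma')_{x_1}$ compatible with the actions on $X'$ (injective since $\sigma(x_0)=x_0$, surjective since any $\tau\in\Aut(\Gamma')_{x_1}$ extended by $\tau(x_0)=x_0$ is an automorphism of $\Gamma$, the only edge of $\Gamma$ not inside $X'$ being $\{x_0,x_1\}$, which $\tau$ preserves). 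Hence $\Terw_\ell(\Gamma',x_1)=\C\langle A',E_{Z_1},\dots,E_{Z_s}\rangle$ with $A'=A(\Gamma')$. Viewing $M_{X'}(\C)\subset M_X(\C)$, I also record $A'=E_{X'}AE_{X'}$, so $A=A'+E_{x_0,x_1}+E_{x_1,x_0}$, that $E_{X'}=I-E_{x_0}=E_{Z_1}+\dots+E_{Z_s}$, and --- the identity that makes valency one do its work ---
\[
 AE_{x_0}A=(AE_{x_0})(E_{x_0}A)=E_{x_1,x_0}E_{x_0,x_1}=E_{x_1,x_1}=E_{x_1}=E_{Z_1},
\]
using $E_{x_0}A=E_{x_0,x_1}$ and $AE_{x_0}=E_{x_1,x_0}$.

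For the inclusion $\Terw_\ell(\Gamma',x_1)\subset E_{X'}\Terw_\ell(\Gamma,x_0)E_{X'}$: the right-hand side is a subalgebra of $M_X(\C)$, unital with identity $E_{X'}$, so it suffices to see it contains the generators $A'$ and $E_{Z_1},\dots,E_{Z_s}$. This is immediate: $A'=E_{X'}AE_{X'}$ with $A\in\Terw_\ell(\Gamma,x_0)$ and $E_{X'}=I-E_{x_0}\in\Terw_\ell(\Gamma,x_0)$, and $E_{Z_j}=E_{X'}E_{Z_j}E_{X'}$ for $j\ge1$ since $Z_j\subset X'$.

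For the reverse inclusion, since the $E_{Z_j}$ are orthogonal idempotents summing to $I$, the algebra $\Terw_\ell(\Gamma,x_0)$ is spanned by the alternating words $w=E_{Z_{i_0}}AE_{Z_{i_1}}A\cdots AE_{Z_{i_m}}$ ($m\ge0$). As $w=E_{Z_{i_0}}w=wE_{Z_{i_m}}$, one has $E_{X'}wE_{X'}=w$ when $i_0,i_m\ge1$ and $E_{X'}wE_{X'}=0$ otherwise, so $E_{X'}\Terw_\ell(\Gamma,x_0)E_{X'}$ is spanned by those $w$ with $i_0,i_m\ge1$. I would then prove $w\in\Terw_\ell(\Gamma',x_1)$ for such $w$ by induction on $m$, with $m=0$ trivial. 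If no $i_u$ equals $0$, all idempotents in $w$ lie among $E_{Z_1},\dots,E_{Z_s}$; since $E_{Z_k}E_{x_0,x_1}=0=E_{x_1,x_0}E_{Z_k}$ for $k\ge1$, substituting $A=A'+E_{x_0,x_1}+E_{x_1,x_0}$ one factor at a time turns $w$ into $E_{Z_{i_0}}A'E_{Z_{i_1}}A'\cdots A'E_{Z_{i_m}}\in\Terw_\ell(\Gamma',x_1)$. If some $i_u=0$ (necessarily $1\le u\le m-1$), take the smallest such $u$, so $i_{u-1}\ge1$; using $AE_{x_0}A=E_{Z_1}$ rewrite the inner block $E_{Z_{i_{u-1}}}AE_{Z_0}AE_{Z_{i_{u+1}}}$ as $E_{Z_{i_{u-1}}}E_{Z_1}E_{Z_{i_{u+1}}}$, which by orthogonality is $0$ unless $i_{u-1}=i_{u+1}=1$ and otherwise collapses $w$ to an alternating word with two fewer factors of $A$ and outer indices still $\ge1$; then apply the inductive hypothesis.

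The main obstacle is the reverse inclusion, and within it the interior occurrences of the idempotent $E_{x_0}=E_{Z_0}$: a word containing them does not obviously live in $\Terw_\ell(\Gamma',x_1)$, and the crux is that valency one collapses $AE_{x_0}A$ to the single idempotent $E_{x_1}=E_{Z_1}$, after which each such occurrence either annihilates the word or shortens it. A secondary point needing care is the bookkeeping for $\ell=3$ --- that $\{x_1\}$ is genuinely one of the blocks $Z_j$, so that $E_{x_1}$ is a generator on the $\Gamma'$ side --- which again is just the fact that $x_1$ is fixed by $G_{x_0}$.
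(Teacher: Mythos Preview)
Your proof is correct and follows essentially the same approach as the paper: identify the restricted partition on $X'$ (including the crucial fact that $\{x_1\}$ is itself a block), check generators for the easy inclusion, and for the reverse inclusion reduce alternating words using the key identity $AE_{x_0}A=E_{x_1}$. The only organizational difference is that the paper's induction splits a word at a \emph{nonzero} interior index (reducing to the all-zero interior case, where $E_{X_0}AE_{X_0}=O$ leaves only $E_{X'}AE_{X'}$ and $E_{X'}AE_{x_0}AE_{X'}=E_{x_1}$), whereas you collapse at a \emph{zero} interior index and handle the all-nonzero case directly via $E_{Z_k}AE_{Z_{k'}}=E_{Z_k}A'E_{Z_{k'}}$; these are dual bookkeeping schemes for the same argument.
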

\begin{proof}
  We consider the case $\ell=2$.
  Let $X = X_0 \cup X_1 \cup \dots \cup X_D$ be the distance partition with respect to $x_0$ in $\Gamma$.
  Remark that $X_0 = \{x_0\}$ and $X_1 = \{x_1\}$.
  Then $X' = X_1 \cup \dots \cup X_D$ is a distance partition with respect to $x_1$ in $\Gamma'$.
  Thus $\Terw_2(\Gamma', x_1)$ is generated by $E_{X'} A E_{X'}$ and $E_{X_1}, \dots, E_{X_D}$.
  We have $E_{X'} \Terw_2(\Gamma, x_0)E_{X'} \supset \Terw_2(\Gamma', x_1)$.

  To show $E_{X'} \Terw_2(\Gamma, x_0) E_{X'} \subset \Terw_2(\Gamma', x_1)$,
  it is enough to show that
  \[
    E_{X'} A E_{X_{s_1}} A E_{X_{s_2}} \dots E_{X_{s_t}} A E_{X'} \in \Terw_2(\Gamma', x_1)
  \]
  for $0\leq s_u\leq D$.
  If $s_u\ne 0$ for some $u$, then we can divide the term into two parts.
  We may assume that $s_u=0$ for all $1\leq u\leq t$.
  However, $E_{X_0}AE_{X_0}=O$.
  Therefore, it is enough to consider $E_{X'}AE_{X'}$ and $E_{X'}AE_{x_0}AE_{X'}$.
  Now $E_{X'}AE_{X'}\in \Terw_2(\Gamma',x_1)$ and $E_{X'}AE_{x_0}AE_{X'}=E_{x_1}\in \Terw_2(\Gamma',x_1)$.
  We have $E_{X'}\Terw_2(\Gamma, x_0)E_{X'}\subset \Terw_2(\Gamma',x_1)$.

  We consider the case $\ell=3$.
  Set $G=\Aut(\Gamma)$.
  Since $G_{x_0}$ fixes $x_1$, we can see that the stabilizer of $x_1$ in $\Aut(\Gamma')$ coincides with $G_{x_0}$.
  Suppose that $X=Y_0\cup Y_1\cup \dots\cup Y_r$ is a partition of $X$
  into $G_{x_0}$-orbits where $Y_0=\{x_0\}$ and $Y_1=\{x_1\}$.
  Then $X'=Y_1\cup \dots\cup Y_r$ is a partition of $X'$ into $G_{x_0}$-orbits.
  Thus the same arguments as above can be applied and the statement for $\Terw_3$ holds.
\end{proof}

\begin{prop}\label{prop:valencyone}
  Suppose, for $\ell=2,3$, that $\Terw_\ell(\Gamma', x_1)\cong M_{n_0}(\C)\oplus\bigoplus_{i=1}^r M_{n_i}(\C)$,
  where the primitive idempotent $E_{x_1}$ belongs to $M_{n_0}(\C)$.
  Then $\Terw_\ell(\Gamma, x_0)\cong M_{n_0+1}(\C)\oplus\bigoplus_{i=1}^r M_{n_i}(\C)$
  and the primitive idempotent $E_{x_0}$ belongs to $M_{n_0+1}(\C)$.
\end{prop}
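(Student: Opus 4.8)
The plan is to leverage Lemma~\ref{lem:valencyone}, which identifies $\Terw_\ell(\Gamma', x_1)$ with the corner algebra $E_{X'}\Terw_\ell(\Gamma,x_0)E_{X'}$ inside $\Terw_\ell(\Gamma,x_0)$, together with the fact (from Lemma~\ref{lem:principal}, since $|X_0|=|X_1|=\{x_0\},\{x_1\}$ are singletons) that $E_{x_0}\Terw_\ell(\Gamma,x_0)\cong E_{x_1}\Terw_\ell(\Gamma,x_0)$ as $\Terw_\ell(\Gamma,x_0)$-modules. First I would write $\mathcal{A}=\Terw_\ell(\Gamma,x_0)$ and $e=E_{X'}=I-E_{x_0}$, so that $e\mathcal{A}e=\Terw_\ell(\Gamma',x_1)$ by the lemma. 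Since $\Terw_\ell(\Gamma',x_1)$ is semisimple with Wedderburn decomposition $M_{n_0}(\C)\oplus\bigoplus_{i=1}^r M_{n_i}(\C)$, I would fix a primitive idempotent decomposition of $e$ in $e\mathcal{A}e$ and note, via Lemma~\ref{lem:equividemp}, that primitivity and equivalence of these idempotents are the same whether computed in $e\mathcal{A}e$ or in $\mathcal{A}$. Thus the isomorphism classes $[f_0],[f_1],\dots,[f_r]$ of these idempotents (one per block, $f_0$ being the class of $E_{x_1}$) are pairwise inequivalent simple $\mathcal{A}$-modules, and each $n_i$ equals the number of summands of $e$ in the block of $f_i$.

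\textbf{Key steps.} Next I would account for the remaining rank-one idempotent $E_{x_0}$: since $E_{x_0}$ is primitive in $\mathcal{A}$ (it has rank one) and $E_{x_0}\mathcal{A}\cong E_{x_1}\mathcal{A}$, the idempotent $E_{x_0}$ lies in the same block as $f_0=E_{x_1}$. Now $1=E_{x_0}+e=E_{x_0}+\sum(\text{primitive idempotents of }e)$ is a primitive idempotent decomposition of $1$ in $\mathcal{A}$. Counting summands per block: the block of $f_0$ now contains $n_0+1$ mutually equivalent primitive idempotents, while the block of $f_i$ for $i\geq 1$ still contains exactly $n_i$. By the structure theory recalled at the end of Section~\ref{sec:prelim} (the number of primitive idempotents of $1$ in a given block equals the matrix size of that block), this forces $\mathcal{A}\cong M_{n_0+1}(\C)\oplus\bigoplus_{i=1}^r M_{n_i}(\C)$, with $E_{x_0}$ (and $E_{x_1}$) belonging to the $M_{n_0+1}(\C)$ component. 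Finally I would check there are no extra blocks: every primitive idempotent of $\mathcal{A}$ is equivalent to one appearing in the decomposition of $1$ above, and each such is either $E_{x_0}$ or lies in $e\mathcal{A}e$, hence is equivalent to some $f_i$; so the blocks are exactly the $r+1$ listed.

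\textbf{Main obstacle.} The one point requiring genuine care is justifying that no simple $\mathcal{A}$-module is "lost" or "created" when passing to the corner $e\mathcal{A}e$ — i.e., that the simple $\mathcal{A}$-modules are precisely $E_{x_0}\mathcal{A}$ together with the simple $e\mathcal{A}e$-modules (pulled up). This is exactly where Lemma~\ref{lem:equividemp} does the work: since $1-e=E_{x_0}$ is a single primitive idempotent equivalent to $E_{x_1}\in e\mathcal{A}e$, every block of $\mathcal{A}$ already "meets" $e\mathcal{A}e$ (no block is killed by cutting with $e$), so $e$ is a "full" idempotent in the relevant sense and the block structure of $\mathcal{A}$ is determined by that of $e\mathcal{A}e$ plus the bookkeeping of the one extra idempotent $E_{x_0}$. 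Everything else is routine idempotent counting, and the rank-one observation that $E_{x_0}$ and $E_{x_1}$ are primitive and equivalent (the latter from Lemma~\ref{lem:principal}).
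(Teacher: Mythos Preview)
Your proposal is correct and follows essentially the same route as the paper: use Lemma~\ref{lem:valencyone} to identify $E_{X'}\Terw_\ell(\Gamma,x_0)E_{X'}$ with $\Terw_\ell(\Gamma',x_1)$, transfer a primitive idempotent decomposition of $E_{X'}$ up to $\mathcal{A}$ via Lemma~\ref{lem:equividemp}, adjoin the rank-one primitive idempotent $E_{x_0}$, and observe $E_{x_0}\sim E_{x_1}$. The only cosmetic difference is that you cite Lemma~\ref{lem:principal} for $E_{x_0}\mathcal{A}\cong E_{x_1}\mathcal{A}$, whereas the paper simply notes $E_{x_0}AE_{x_1}\neq 0$ directly; these are the same argument (and your discussion of ``no extra blocks'' is just making explicit what is automatic once one has a primitive idempotent decomposition of $1$).
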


\begin{proof}
  Suppose $\ell$ is $2$ or $3$. 
  Let $I_{X'}=e_1+\dots+e_s$ be a primitive idempotent decomposition of $I_{X'}$ in $\Terw_\ell(\Gamma', x_1)$.
  By Lemma \ref{lem:equividemp} and Lemma \ref{lem:valencyone},
  $I_{X}=E_{x_0}+e_1+\dots+e_s$ is a primitive idempotent decomposition of $I_{X}$ in $\Terw_\ell(\Gamma, x_0)$.
  Since $E_{x_0}AE_{x_1}\ne 0$, we can say that $E_{x_0}$ and $E_{x_1}$ are equivalent.
  The assertion holds.
\end{proof}

\section{Some specific graphs}\label{sec:someGRPH}

In this section, we investigate the structures of $\Terw_1, \Terw_2, \Terw_3$, and $\Terw_4$ for path graphs, star graphs, and cycle graphs.

\subsection{Path graphs}

Let $P_n$ be the path graph with $n$ vertices for $n \geq 2$ \cite[Section 1.4.4]{MR2882891}.
We set the vertex set $\{1, 2, \dots, n\}$ and the edge set $E = \{\{i,j\}: |i-j| = 1\}$.
Let $m$ be the base vertex.
By symmetry, we may assume that $2m-1 \leq n$.

Let $\zeta$ be a primitive $(2n+2)$th root of unity in $\C$.
Then the eigenvalues of the adjacency matrix $A$ of the path graph $P_n$ are
\[
 \lambda_i := \zeta^{i} + \zeta^{-i} \quad (i = 1, \dots, n)
\]
and the corresponding eigenvectors are
\[
 {\bs v}_i := (\zeta^{i}-\zeta^{-i}, \zeta^{2i}-\zeta^{-2i}, \dots, \zeta^{ni}-\zeta^{-ni}).
\]
Moreover all eigenvalues are of multiplicity $1$.
Thus, we know $\Terw_0(P_n) \cong \bigoplus_{i=1}^{n} \C$ from Theorem \ref{basic0}.

First, we determine the structure of $\Terw_1$.

\begin{thm}
  $\Terw_1(P_n, m) \cong M_{n-t}(\C) \oplus \bigoplus_{i=1}^{t} \C$, where $t := \sharp \{s: 1 \leq s \leq n, (n+1)|sm\} = \lfloor \frac{n \gcd(m,n+1)}{n+1} \rfloor$.
Especially, $\dim \Terw_1(P_n, m) = (n-t)^2+t$.
\end{thm}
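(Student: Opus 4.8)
The plan is to study $\Terw_1(P_n,m)=\C\langle A,E_m\rangle$ through its faithful action on the standard module $\C X$, exploiting that all $n$ eigenvalues $\lambda_1,\dots,\lambda_n$ of $A$ have multiplicity one and that the eigenbasis $\{\bs v_i\}$ is given explicitly. (Note Theorem~\ref{one4} does not apply once $m\ge 2$: the diameter with respect to $m$ is only $n-m$, so there are more than $D+1$ eigenvalues.) First I would isolate the ``bad'' eigenvalues of Proposition~\ref{one1}: the $m$-th entry of $\bs v_i$ equals $\zeta^{mi}-\zeta^{-mi}$, which vanishes iff $\zeta^{2mi}=1$ iff $(n+1)\mid mi$. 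Put $S=\{\,i:1\le i\le n,\ (n+1)\mid mi\,\}$, so $t=|S|$. By Proposition~\ref{one1} and Corollary~\ref{one2}, for each $i\in S$ the line $\C\bs v_i$ is a $1$-dimensional simple $\Terw_1$-module, these $t$ modules are pairwise non-isomorphic, and $W:=\bigoplus_{i\in S}\C\bs v_i$ is their internal direct sum inside $\C X$.

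Next I would pin down $E_m\Terw_1$. Using $E_mME_m=M_{m,m}E_m$, every word in $A,E_m$ left-multiplied by $E_m$ collapses to a scalar multiple of some $E_mA^{j}$, and $E_mM\mapsto\bs e_mM$ (= row $m$ of $M$) is a $\Terw_1$-module isomorphism of $E_m\Terw_1$ onto the cyclic submodule $\bs e_m\Terw_1=\C[A]\bs e_m$ of $\C X$ (here $\bs e_mE_m=\bs e_m$, and $\C[A]\bs e_m$ is $E_m$-stable as it contains $\bs e_m$). Writing $\bs e_m=\sum_i c_i\bs v_i$ gives $\C[A]\bs e_m=\bigoplus_{i:\,c_i\ne 0}\C\bs v_i$, so the key point becomes $c_i\ne 0\iff i\notin S$. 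Since $A$ is real symmetric, $\langle\bs v_i,\bs v_j\rangle=0$ for $i\ne j$ (standard symmetric bilinear form), whence $c_i=\langle\bs e_m,\bs v_i\rangle/\langle\bs v_i,\bs v_i\rangle=(\bs v_i)_m/\langle\bs v_i,\bs v_i\rangle$, provided $\langle\bs v_i,\bs v_i\rangle\ne 0$. The one thing to check is this nonvanishing: $\langle\bs v_i,\bs v_i\rangle=\sum_{k=1}^{n}(\zeta^{ki}-\zeta^{-ki})^2=\sum_{k=1}^n\zeta^{2ki}+\sum_{k=1}^n\zeta^{-2ki}-2n=-(2n+2)$, where $\sum_{k=1}^n\zeta^{\pm 2ki}=-1$ because $\zeta^{\pm 2i}$ is a nontrivial $(n+1)$-th root of unity for $1\le i\le n$. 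Hence $\dim E_m\Terw_1=\#\{i:c_i\ne 0\}=n-t$, and moreover $\bs e_m\in U:=\bigoplus_{i\notin S}\C\bs v_i=\C[A]\bs e_m\cong E_m\Terw_1$.

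Then I would verify that $U$ is simple: a $\Terw_1$-submodule $U'\subseteq U$ is $A$-stable, hence $U'=\bigoplus_{i\in T}\C\bs v_i$ with $T\subseteq\{1,\dots,n\}\setminus S$; if $U'\ne 0$ then $U'E_m\ne 0$ (else $\sum_{i\in T}d_i(\bs v_i)_m=0$ for all scalars $d_i$, forcing $(\bs v_i)_m=0$ for $i\in T$, impossible since $T\cap S=\varnothing$), so $\bs e_m\in U'$ and $U'\supseteq\C[A]\bs e_m=U$. Thus $\C X=U\oplus W$ exhibits a simple $\Terw_1$-module of dimension $n-t$ together with $t$ pairwise non-isomorphic $1$-dimensional simples, none isomorphic to $U$ ($E_m$ acts as $0$ on $W$ but not on $U$; or compare dimensions). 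As $\C X$ is a faithful module over the semisimple algebra $\Terw_1$, it contains a copy of every simple $\Terw_1$-module, so $\pit(\Terw_1)=\{[U]\}\cup\{[\C\bs v_i]:i\in S\}$, and Wedderburn's theorem yields $\Terw_1(P_n,m)\cong M_{n-t}(\C)\oplus\bigoplus_{i=1}^{t}\C$, hence $\dim\Terw_1(P_n,m)=(n-t)^2+t$.

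Finally, for the closed form: with $g=\gcd(m,n+1)$, writing $m=gm'$ and $n+1=gq$ with $\gcd(m',q)=1$, the condition $(n+1)\mid sm$ is equivalent to $q\mid s$, so $t=\lfloor n/q\rfloor=\lfloor\tfrac{ng}{n+1}\rfloor=\lfloor\tfrac{n\gcd(m,n+1)}{n+1}\rfloor$ (and since $q=\tfrac{n+1}{g}\ge 2$, one even gets $t=g-1$). The main obstacle is the middle paragraph — controlling the $\bs v_i$-components of $\bs e_m$ — but this reduces to the single elementary computation $\langle\bs v_i,\bs v_i\rangle=-(2n+2)\ne 0$; everything else is Proposition~\ref{one1}, Corollary~\ref{one2}, and routine semisimple-module bookkeeping.
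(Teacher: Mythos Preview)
Your proof is correct, but it follows a different path from the paper's. The paper works entirely on the idempotent side: since all eigenvalues are simple, the spectral projectors $\varepsilon_i\in\C[A]\subset\Terw_1$ form a primitive idempotent decomposition $I_X=\sum_{i=1}^n\varepsilon_i$, and one only needs to decide which $\varepsilon_i$ are equivalent to $E_m$. Because $\C X\varepsilon_i=\C\bs v_i$, the nonvanishing $\bs v_iE_m\ne\bs 0$ forces $\varepsilon_iE_m\ne O$, hence $\varepsilon_i\sim E_m$ exactly when $(\bs v_i)_m\ne 0$; this immediately gives $n-t$ idempotents in the class of $E_m$ and $t$ singleton classes, with no further computation needed. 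Your argument instead works on the module side: you realize $E_m\Terw_1\cong\C[A]\bs e_m$, expand $\bs e_m$ in the eigenbasis, and must compute $\langle\bs v_i,\bs v_i\rangle=-(2n+2)\ne 0$ under the symmetric bilinear form to pin down which $c_i$ vanish, and then prove simplicity of $U$ separately. The paper's approach is shorter and sidesteps the orthogonality calculation entirely; yours is more explicit and delivers the actual simple submodule $U\subset\C X$ rather than just its isomorphism type.
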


\begin{proof}
 Since $\C X = \bigoplus_{i=1}^{n} \C \bs{v}_i$, we can define projections $\pi_i : \C X \to \C X$ such that $\pi_i(\bs{v}_j) = \delta_{i,j} \bs{v}_i$.
Let $\varepsilon_i$ be the matrix corresponding to $\pi_i$, namely $\pi_i(\bs{w}) = \bs{w} \varepsilon_i$ for all $\bs{w} \in \C X$.
Then we know $\varepsilon_i \in \C \langle A \rangle = \Terw_0$ by theory of linear algebra.
Since $\lambda_i$ is a simple root, $\varepsilon_i$ has rank one and is a primitive idempotent in $\Terw_1$.

We will apply Proposition \ref{one1}.
The $m$-th entry of $\bs{v}_i$ is $\zeta^{mi}-\zeta^{-mi}$, and this is zero if and only if $2n+2\mid 2mi$.
Thus we have $t$ non-isomorphic $1$-dimensional simple $\Terw_1$-modules.

We consider the case of $\zeta^{mi}-\zeta^{-mi}\ne 0$.
We remark that $\C\bs{v}_i=\C X\varepsilon_i$.
Then $\bs{v}_i E_{x_0} \ne \bs{0}$ and hence $\varepsilon_i E_{x_0} \ne O$.
This means that $\varepsilon_i$ and  $E_{x_0}$ are equivalent primitive idempotents.
Thus the primitive idempotent decomposition $I_X=\sum_{i=1}^n \varepsilon_i$ has $(n-t)$ idempotents equivalent to $E_{x_0}$.
This means that the dimension of the simple $\Terw_1$-module $E_{x_0} \Terw_1$ is $n-t$.
The assertion holds.
\end{proof}

Next, we consider the structures of $\Terw_{\ell}(P_n, m)$ ($\ell = 2,3,4$) of the path graph $P_n$.
Let $X=X_0\cup\dots\cup X_D$ is the distance partition of $P_n$ with respect to the base vertex $m$.

\begin{lem}\label{lem_med2}
  The following statements hold.
  \begin{enumerate}[{\rm (1)}]
    \item $E_{m,m}\in \Terw_1(P_{n},m)$.
    \item $E_{m,m-k}+E_{m,m+k}\in \Terw_1(P_{n},m)$ for $k=1,\dots,m-1$.
    \item $E_{m-k,m}+E_{m+k,m}\in \Terw_1(P_{n},m)$ for $k=1,\dots,m-1$.
    \item $E_{m-k,m-k'}+E_{m-k,m+k'}+E_{m+k,m-k'}+E_{m+k,m+k'}\in \Terw_1(P_{n},m)$ for $k,k'=1,\dots,m-1$.
    \item $E_{m-k,m-k'}+E_{m+k,m+k'}\in \Terw_2(P_{n},m)$ for $k,k' =1,\dots,m-1$.
  \end{enumerate}
  Moreover, elements in the above statements are linearly independent.
\end{lem}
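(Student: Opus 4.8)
The plan is to establish the five membership claims in the order given, using repeatedly that $\Terw_1 = \C\langle A, E_{m,m}\rangle$ and $\Terw_2$ are closed under transpose (as noted in the introduction, since $A$ and $E_{m,m}$ are symmetric), that $E_{m,m}=E_{X_0}$ and $E_{X_j}=E_{m-j,m-j}+E_{m+j,m+j}$ for $1\le j\le m-1$, and that every power $A^i$ lies in $\Terw_0\subset\Terw_2$. Throughout I use two elementary facts about the path: $\partial(m,v)=|v-m|$, so that $X_j=\{m-j,m+j\}$ for $1\le j\le m-1$ (here $m+j\le n$ by the standing assumption $2m-1\le n$), and that between any two vertices of $P_n$ there is a unique walk of minimal length.

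Claim (1) is immediate, $E_{m,m}=E_{x_0}$ being a generator of $\Terw_1$. For (2) I would induct on $k$, with base cases $E_{m,m}\in\Terw_1$ and $E_{m,m}A=E_{m,m-1}+E_{m,m+1}\in\Terw_1$. For $2\le k\le m-1$, using $E_{a,b}A=E_{a,b-1}+E_{a,b+1}$ when $2\le b\le n-1$, one computes
\[
 \bigl(E_{m,m-(k-1)}+E_{m,m+(k-1)}\bigr)A=\bigl(E_{m,m-k}+E_{m,m+k}\bigr)+\bigl(E_{m,m-(k-2)}+E_{m,m+(k-2)}\bigr),
\]
and this identity is legitimate because the columns $m\pm(k-1)$ lie strictly between $1$ and $n$ (for $m+(k-1)$ this is exactly where $2m-1\le n$ enters). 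The left-hand side lies in $\Terw_1$ by induction, and the last bracket equals $2E_{m,m}$ when $k=2$ and lies in $\Terw_1$ by induction when $k\ge 3$; hence $E_{m,m-k}+E_{m,m+k}\in\Terw_1$. Transposing (2) gives (3), and multiplying the element of (3) by the element of (2), with $E_{m\pm k,m}E_{m,m\pm k'}=E_{m\pm k,m\pm k'}$, gives (4).

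For (5), the case $k=k'$ is just $E_{X_k}\in\Terw_2$. For $k\ne k'$ we may assume $k<k'$ after transposing, and then I claim $E_{X_k}A^{\,k'-k}E_{X_{k'}}=E_{m-k,m-k'}+E_{m+k,m+k'}$, which lies in $\Terw_2$: the $(x,y)$-entry of $A^{\,k'-k}$ counts walks of length $k'-k$, and for $x\in X_k$, $y\in X_{k'}$ the pairs $(m-k,m+k')$ and $(m+k,m-k')$ are at distance $k+k'>k'-k$ and contribute $0$, while $(m-k,m-k')$ and $(m+k,m+k')$ are at distance exactly $k'-k$ and each carries the unique minimal walk; the inequality $k'\le m-1$ guarantees $m-k'\ge 1$, so that $X_{k'}$ really does have two vertices and none of the walks involved meets the ends of the path.

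Finally, for the linear independence I would split the matrix units that occur into the four blocks $\{E_{m,m}\}$, $\{E_{m,b}:b\ne m\}$, $\{E_{a,m}:a\ne m\}$, $\{E_{a,b}:a\ne m,\ b\ne m\}$, which support the elements of (1), (2), (3), and of (4)--(5) respectively; within the second and third blocks distinct values of $k$ have disjoint supports, and within the last block, for each fixed $(k,k')$ the four units $E_{m\pm k,m\pm k'}$ are disjoint from all others and there the elements of (4) and (5) read as the vectors $(1,1,1,1)$ and $(1,0,0,1)$, which are independent. The main obstacle is the walk-counting identity in (5)---that a product of layer-restricted operators collapses to a two-term sum---together with the bookkeeping needed to keep the two ends of the path from interfering, which is precisely what the ranges $1\le k,k'\le m-1$ and the hypothesis $2m-1\le n$ are set up to ensure.
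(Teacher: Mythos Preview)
Your proof is correct and follows essentially the same route as the paper's: (1) is by definition, (2) by induction via $(E_{m,m-k+1}+E_{m,m+k-1})A$, (3) by transpose, (4) as the product of (3) and (2), and (5) via $E_{X_k}A^{|k-k'|}E_{X_{k'}}$ together with the distance computation $\partial(m\mp k,m\pm k')=k+k'>|k-k'|$. The paper dispatches the linear independence with a single sentence, whereas you supply the block-by-block justification; your boundary bookkeeping in (2) and (5) is also more explicit than the paper's, but neither difference amounts to a genuinely different method.
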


\begin{proof}
  The statement (1) is clear by definition.

  We prove (2) by the induction on $k$.
  First, we have $E_{m,m-1}+E_{m,m+1}=E_{X_0}A\in \Terw_1$.
  Suppose (2) holds for all $k'<k$.
  We have $\Terw_1\ni (E_{m,m-k+1}+E_{m,m+k-1})A=(E_{m,m-k+2}+E_{m,m+k-2})+(E_{m,m-k}+E_{m,m+k})$
  and $E_{m,m-k+2}+E_{m,m+k-2}\in \Terw_1$ by the inductive hypothesis.
  Thus $E_{m,m-k}+E_{m,m+k}\in \Terw_1$ holds.

  Similarly, (3) holds.

  By (2) and (3), we have
  $E_{m-k,m-k'}+E_{m-k,m+k'}+E_{m+k,m-k'}+E_{m+k,m+k'}=(E_{m+k,m}+E_{m-k,m})(E_{m,m-k'}+E_{m,m+k'})\in \Terw_1$
  and (4) holds.

  We have $\partial(m-k,m-k')= \partial(m+k,m+k')=|k-k'|$ and $\partial(m-k,m+k') = \partial(m+k,m-k')=|k+k'|>|k-k'|$.
  Thus $\Terw_2 \ni E_{X_k}A^{|k-k'|} E_{X_{k'}}=E_{m-k,m-k'}+E_{m+k,m+k'}$.
  (5) holds.

  It is easy to see these elements are linearly independent.
\end{proof}

\begin{thm}\label{Pn:neq}
  Suppose $n>2m-1$. Then $\Terw_2(P_n,m)=\Terw_3(P_n,m)=\Terw_4(P_n,m)=M_X(\C)$.
  \begin{center}
    \begin{tikzpicture}
      \coordinate[label=above:$1$]    (A) at (8,1);
      \coordinate[label=above:$2$]    (B) at (6,1);
      \coordinate[label=above:$m-2$]  (C) at (4,1);
      \coordinate[label=above:$m-1$]  (D) at (2,1);
      \coordinate[label=left:$m$]     (E) at (0,0.5);
      \coordinate[label=below:$m+1$]  (F) at (2,0);
      \coordinate[label=below:$m+2$]  (G) at (4,0);
      \coordinate[label=below:$2m-2$] (H) at (6,0);
      \coordinate[label=below:$2m-1$] (I) at (8,0);
      \coordinate[label=below:$n-1$]  (J) at (10,0);
      \coordinate[label=below:$n$]    (K) at (12,0);
      \draw (A)--(B);
      \draw[dashed] (B)--(C);
      \draw (C)--(D)--(E)--(F)--(G);
      \draw[dashed] (G)--(H);
      \draw (H)--(I);
      \draw[dashed] (I)--(J);
      \draw(J)--(K);
      \fill[black] (A) circle (0.1);
      \fill[black] (B) circle (0.1);
      \fill[black] (C) circle (0.1);
      \fill[black] (D) circle (0.1);
      \fill[black] (E) circle (0.1);
      \fill[black] (F) circle (0.1);
      \fill[black] (G) circle (0.1);
      \fill[black] (H) circle (0.1);
      \fill[black] (I) circle (0.1);
      \fill[black] (J) circle (0.1);
      \fill[black] (K) circle (0.1);
    \end{tikzpicture}
  \end{center}
\end{thm}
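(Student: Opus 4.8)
The inclusions $\Terw_2 \subseteq \Terw_3 \subseteq \Terw_4 \subseteq M_X(\C)$ are already available, so the plan is to prove the single equality $\Terw_2(P_n, m) = M_X(\C)$; the rest then follows at once. Write $\Terw = \Terw_2(P_n, m)$, and recall that since $A$ and each $E_{X_k}$ is symmetric, $\Terw$ is stable under transposition — I will use this repeatedly. Under the hypothesis $n > 2m-1$, i.e.\ $n \ge 2m$, the diameter with respect to $m$ is $D = n - m \ge m$, and the distance partition is $X_0 = \{m\}$, $X_k = \{m-k,\, m+k\}$ for $1 \le k \le m-1$, and $X_k = \{m+k\}$ for $m \le k \le n-m$. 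Hence $\Terw$ already contains $E_{m,m} = E_{X_0}$ and all $E_{i,i} = E_{X_{i-m}}$ with $2m \le i \le n$; the whole difficulty is to split the doubleton classes $E_{X_k} = E_{m-k,m-k} + E_{m+k,m+k}$, $1 \le k \le m-1$, into their two rank-one summands.

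First I would peel off the far end of the path. Starting from $E_{2m,2m} \in \Terw$: if $2m < n$, then $E_{2m,2m}A = E_{2m,2m-1} + E_{2m,2m+1}$, and subtracting $E_{2m,2m}\,A\,E_{2m+1,2m+1} = E_{2m,2m+1}$ leaves $E_{2m,2m-1} \in \Terw$; if $2m = n$, then $E_{2m,2m}A = E_{2m,2m-1}$ outright. Then I would run a short induction on $j = 1, \dots, m-1$ with inductive data ``$E_{2m-j+1,\,2m-j+1} \in \Terw$ and $E_{2m-j+1,\,2m-j} \in \Terw$'' (the base case $j=1$ being what was just established). Transposing the second gives $E_{2m-j,\,2m-j+1} \in \Terw$; the product $E_{2m-j,\,2m-j+1}\,E_{2m-j+1,\,2m-j} = E_{2m-j,\,2m-j}$ places this diagonal idempotent in $\Terw$; since $2m-j$ lies in the doubleton class $X_{m-j} = \{j,\, 2m-j\}$, subtracting it from $E_{X_{m-j}}$ yields $E_{j,j} \in \Terw$; and $E_{2m-j,\,2m-j}A - E_{2m-j,\,2m-j+1} = E_{2m-j,\,2m-j-1}$ supplies the data for step $j+1$. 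When the induction finishes, $E_{i,i} \in \Terw$ for $i \in \{1,\dots,m-1\}$, for $i \in \{m+1,\dots,2m-1\}$, for $i = m$, and for $i \in \{2m,\dots,n\}$ — that is, for every vertex.

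With all diagonal matrix units in hand, the conclusion is routine: $E_{i,i}\,A\,E_{i+1,i+1} = E_{i,i+1} \in \Terw$ since $A_{i,i+1} = 1$, its transpose gives $E_{i+1,i}$, and telescoping products $E_{i,i+1}E_{i+1,i+2}\cdots E_{j-1,j} = E_{i,j}$ produce every matrix unit. Thus $\Terw = M_X(\C)$, and the inclusion chain forces $\Terw_3 = \Terw_4 = M_X(\C)$ as well. One could alternatively observe here that $\Aut(P_n)$ is generated by $i \mapsto n+1-i$, which fixes $m$ only when $n = 2m-1$, so that $G_m = 1$ and Lemma \ref{basic3} gives $\Terw_3 = \Terw_4 = M_X(\C)$ directly; but the computation above subsumes this.

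The step I expect to be the crux is the inductive peeling in the second paragraph: splitting the symmetric pairs $\{m-k, m+k\}$ is exactly where the hypothesis $n > 2m-1$ enters, since it is what guarantees an honest singleton distance class $X_m = \{2m\}$ (and possibly further ones) at the end of the path to anchor the induction. When $n = 2m-1$ this anchor is missing — equivalently, the reflection of $P_n$ fixes $m$ — and the statement genuinely fails there, so no argument can avoid using the hypothesis at precisely this point.
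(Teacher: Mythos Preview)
Your proof is correct and follows essentially the same approach as the paper: both anchor at the singleton class $E_{2m,2m}=E_{X_m}$ (available precisely because $n>2m-1$) to split each doubleton $\{m-k,m+k\}$ into its two diagonal matrix units, and then recover all $E_{i,j}$. The only cosmetic difference is that the paper does the splitting in a single stroke via $E_{m+k,m+k}=(E_{X_k}A^{m-k}E_{2m,2m})(E_{2m,2m}A^{m-k}E_{X_k})$ and then obtains $E_{i,j}$ as a scalar multiple of $E_{i,i}A^{|i-j|}E_{j,j}$, whereas you peel off one vertex at a time by induction and telescope adjacent matrix units; the underlying idea is the same.
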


\begin{proof}
  It is enough to show that $\Terw_2=M_X(\C)$.
  Recall that $X = X_0 \cup \dots \cup X_{n-m}$ is the distance partition with respect to the vertex $m$.
  We have $\Terw_2\ni E_{X_0}=E_{m,m}$, $\Terw_2\ni E_{X_k} = E_{m-k,m-k} + E_{m+k,m+k}$ for $1 \leq k \leq m-1$, and $\Terw_2\ni E_{X_k} = E_{m+k,m+k}$ for $m \leq k \leq n-m$.
  For $1 \leq k \leq m-1$, we have 
\[
 E_{m+k,m+k} = E_{m+k,2m} E_{2m,m+k} = (E_{X_k}A^{m-k}E_{2m,2m})(E_{2m,2m}A^{m-k}E_{X_k}) \in \Terw_2.
\]
  Consequently, $E_{k,k}\in\Terw_2$ for all $1\leq k\leq n$.  

  Now, for every $1 \leq i, j \leq n$, we have $E_{i,i}A^{|i-j|}E_{j,j}\in \Terw_2$ is a nonzero multiple of $E_{i,j}$ and thus $E_{i,j}\in \Terw_2$.
  This leads to $\Terw_2=M_{X}(\C)$.
\end{proof}

Since the path graph $P_n$ is a tree and the stabilizer $G_{m}=1$ in the case of $n > 2m-1$, Theorem \ref{Pn:neq} holds also by \cite[Theorem 4]{MR4147627}.

\begin{thm}\label{prop_med}
  Suppose $n=2m-1$.
  Then $\Terw_2(P_{n},m)= \Terw_3(P_{n},m)= \Terw_4(P_{n},m)\cong M_{m}(\C)\oplus M_{m-1}(\C)$.
  Especially, $\dim \Terw_2(P_{n}, m)=\dim \Terw_3(P_{n}, m)=\dim \Terw_4(P_{n}, m) =2m^2-2m+1=(n^2+1)/4$.
  \begin{center}
    \begin{tikzpicture}
      \coordinate[label=above:$1$]    (A) at (8,1);
      \coordinate[label=above:$2$]    (B) at (6,1);
      \coordinate[label=above:$m-2$]  (C) at (4,1);
      \coordinate[label=above:$m-1$]  (D) at (2,1);
      \coordinate[label=left:$m$]     (E) at (0,0.5);
      \coordinate[label=below:$m+1$]  (F) at (2,0);
      \coordinate[label=below:$m+2$]  (G) at (4,0);
      \coordinate[label=below:$2m-2$] (H) at (6,0);
      \coordinate[label=below:$2m-1$] (I) at (8,0);
      \draw (A)--(B);
      \draw[dashed] (B)--(C);
      \draw (C)--(D)--(E)--(F)--(G);
      \draw[dashed] (G)--(H);
      \draw (H)--(I);
      \fill[black] (A) circle (0.1);
      \fill[black] (B) circle (0.1);
      \fill[black] (C) circle (0.1);
      \fill[black] (D) circle (0.1);
      \fill[black] (E) circle (0.1);
      \fill[black] (F) circle (0.1);
      \fill[black] (G) circle (0.1);
      \fill[black] (H) circle (0.1);
      \fill[black] (I) circle (0.1);
    \end{tikzpicture}
  \end{center}
\end{thm}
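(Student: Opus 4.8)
The plan is to sandwich the three algebras between a lower bound for $\dim\Terw_2$ and the exact value of $\dim\Terw_4$, and then to read off the ring structure of $\Terw_4$ as a centralizer algebra. I would begin by pinning down $G_m$ and the relevant partitions. Because $n=2m-1$, the reflection $\rho\colon i\mapsto 2m-i$ is an automorphism of $P_n$ fixing $m$, and since $\Aut(P_n)=\{\mathrm{id},\rho\}$ we get $G_m=\langle\rho\rangle\cong\Z/2$. The distance partition of the vertex set with respect to $m$ is $X_0=\{m\}$ and $X_k=\{m-k,\,m+k\}$ for $k=1,\dots,m-1$ (no vertex lies at distance $\ge m$ from $m$), and this is precisely the partition of $X$ into $G_m$-orbits. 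Hence $\Terw_2(P_n,m)$ and $\Terw_3(P_n,m)$ are generated by the same matrices, so $\Terw_2(P_n,m)=\Terw_3(P_n,m)$; and $\Terw_2\subseteq\Terw_3\subseteq\Terw_4$ holds in general.

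Next I would count dimensions. Letting $k$ and $k'$ range over $1,\dots,m-1$ in Lemma~\ref{lem_med2}, items (1)--(5) supply
\[
1+2(m-1)+2(m-1)^2=2m^2-2m+1
\]
matrices all lying in $\Terw_2(P_n,m)$, and the last sentence of that lemma says they are linearly independent; thus $\dim\Terw_2(P_n,m)\ge 2m^2-2m+1$. On the other hand, $G_m$ acts on $X\times X$ with $(m,m)$ as its unique fixed point and with the remaining $n^2-1$ ordered pairs split into orbits of size $2$, hence with $1+(n^2-1)/2=2m^2-2m+1$ orbits; by Lemma~\ref{basic4}, $\dim\Terw_4(P_n,m)=2m^2-2m+1$. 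Combined with $\Terw_2\subseteq\Terw_3\subseteq\Terw_4$, this forces all three algebras to coincide, of dimension $2m^2-2m+1$.

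For the ring structure I would use that, by definition, $\Terw_4(P_n,m)$ is the algebra of matrices commuting with the permutation action of $G_m\cong\Z/2$ on $\C X$, that is, $\Terw_4(P_n,m)=\End_{\C G_m}(\C X)$. Since $\rho$ has one fixed vertex and $m-1$ transposed pairs of vertices, $\C X=V_+\oplus V_-$, where $V_\pm$ are the $(\pm1)$-eigenspaces of $\rho$ on $\C X$, with $\dim V_+=m$ and $\dim V_-=m-1$, and these are the two $G_m$-isotypic components of $\C X$. As the trivial and sign characters of $\Z/2$ are one-dimensional and inequivalent, $\End_{\C G_m}(\C X)\cong\End_\C(V_+)\oplus\End_\C(V_-)\cong M_m(\C)\oplus M_{m-1}(\C)$, whose dimension $m^2+(m-1)^2=2m^2-2m+1$ matches the count above. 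The only step needing genuine care is this last identification of $\Terw_4$ with $\End_{\C G_m}(\C X)$ and its block decomposition; everything else is orbit- and dimension-bookkeeping once Lemma~\ref{lem_med2} is in hand, so I do not expect a serious obstacle.
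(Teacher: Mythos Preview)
Your proof is correct and follows essentially the same sandwich strategy as the paper: lower-bound $\dim\Terw_2$ via Lemma~\ref{lem_med2}, compute $\dim\Terw_4$ from the order-two stabilizer $G_m$, and read off the block structure from the decomposition of $\C X$ under $G_m$. The only cosmetic difference is that the paper phrases the last step via the permutation character $\rho=m\cdot 1_{G_m}+(m-1)\chi_{G_m}$, whereas you count $G_m$-orbits on $X\times X$ and then decompose $\C X$ into $(\pm1)$-eigenspaces; for $\Z/2$ these are the same computation.
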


\begin{proof}
  The set of matrices in Lemma \ref{lem_med2} are in $\Terw_2$ and linearly independent.
  Thus $\dim\Terw_2 \geq 2m^2-2m+1$.

  Set $g=(1, n)(2, n-1)\dots(m-1, m+1)$.
  The stabilizer $G_m$ of $m$ in $G = \Aut(P_{n+1})$ is $G_m=G=\langle g \rangle$ of order $2$.
  For the permutation character $\rho$, we have $\rho(1)=2m-1$ and $\rho(g)=1$.
  Thus $\rho=m\ 1_{G_m}+(m-1)\chi_{G_m}$, where $1_{G_m}$ is the trivial character and $\chi_{G_m}$ is the unique non-trivial irreducible character of $G_m$.
  Thus $\Terw_4 \cong M_{m}(\C)\oplus M_{m-1}(\C)$.
  We have $\dim \Terw_2 \leq \dim \Terw_3 \leq \dim \Terw_4 = 2m^2-2m+1$.
\end{proof}

\subsection{Star graphs}

The \emph{star graph} with $n$ vertices is a complete bipartite graph $K_{1,n-1}$
\cite[p.~49]{MR1271140}, \cite[Section 1.4.2]{MR2882891}.
We set the vertex set $\{1, 2, \dots, n\}$ and the edge set $\{ \{1,i\}: i = 2, 3, \dots, n\}$.
By symmetry, it is enough to consider the case that the base vertex is $1$ or $2$.
Since the star graph is equal to the path graph $P_n$ if $n \leq 3$, we may assume that $n \geq 4$.
The eigenvalues of $K_{1,n-1}$ are $-\sqrt{n-1}, 0, \sqrt{n-1}$ with multiplicities $1, n-2, 1$, respectively.
Therefore we have $\Terw_0(K_{1,n-1}) \cong \C \oplus \C \oplus \C$ from Theorem \ref{basic0}.

\begin{thm}
 For $n \geq 4$, $\Terw_\ell(K_{1,n-1},1) \cong M_{2}(\C) \oplus \C$ for $\ell = 1,2,3,4$.
\end{thm}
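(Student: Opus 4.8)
The plan is to show that all four algebras $\Terw_\ell(K_{1,n-1},1)$ coincide and equal $M_2(\C)\oplus\C$ by computing the easiest one, namely $\Terw_1$, from below and the hardest one, namely $\Terw_4$, from above, then invoking the containment chain $\Terw_1\subset\Terw_2\subset\Terw_3\subset\Terw_4$. Since the base vertex is $1$, the distance partition with respect to $1$ is $X_0=\{1\}$, $X_1=\{2,\dots,n\}$ (diameter $D=1$), and this is also the $G_1$-orbit partition because $G_1=\Aut(K_{1,n-1})_1\cong S_{n-1}$ acts transitively on $\{2,\dots,n\}$; hence $\Terw_2(K_{1,n-1},1)=\Terw_3(K_{1,n-1},1)$ already by Lemma \ref{basic2} (the star is distance-transitive), and in fact $E_{X_0}=E_1$, $E_{X_1}=I-E_1$ are available in $\Terw_1$ too since $E_1=E_{x_0}$ is a generator, so even $\Terw_1=\Terw_2=\Terw_3$ is plausible. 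Thus it suffices to prove $\Terw_1\supseteq$ something of dimension $5$ with the right block structure and $\dim\Terw_4\le 5$.

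First I would compute $\Terw_4$ via Lemma \ref{basic4}: the $G_1$-orbits on $X\times X$ are $\{(1,1)\}$, $\{(1,j):j\ge 2\}$, $\{(j,1):j\ge 2\}$, $\{(j,j):j\ge 2\}$, and $\{(i,j):i\ne j,\ i,j\ge 2\}$ — exactly five orbits since $S_{n-1}$ is $2$-transitive on $\{2,\dots,n\}$ — so $\dim\Terw_4=5$. To pin down the block structure, I would use the permutation character argument as in the proof of Theorem \ref{prop_med}: the permutation character $\rho$ of $G_1\cong S_{n-1}$ on $X$ decomposes as $\rho = 2\cdot 1_{G_1} + \psi$, where $1_{G_1}$ is the trivial character (appearing with multiplicity $2$ because $X = \{1\}\sqcup\{2,\dots,n\}$ gives two trivial constituents) and $\psi$ is the standard $(n-2)$-dimensional irreducible character of $S_{n-1}$; hence $\Terw_4\cong M_2(\C)\oplus M_1(\C)=M_2(\C)\oplus\C$, of dimension $4+1=5$, consistent with the orbit count.

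Next I would exhibit the lower bound by analysing $\Terw_1=\C\langle A,E_1\rangle$ directly. Writing $A$ for the adjacency matrix of $K_{1,n-1}$, one has $A = E_{1,X_1}+E_{X_1,1}$ in block form relative to $\{1\}\cup\{2,\dots,n\}$, and $A^2 = E_{1,1}+(n-1)\,?$ — more precisely $A^2$ is block-diagonal with the $(1,1)$ block equal to $n-1$ and the $X_1$-block equal to $J_{X_1}$. From $I$, $E_1$, $A$, $A^2$ one already gets $E_1=E_{1,1}$, $I-E_1=E_{X_1}$, $E_1 A=E_{1,X_1}$ (the first row), $AE_1=E_{X_1,1}$ (the first column), and $J_{X_1}=(I-E_1)A^2(I-E_1)$; these five matrices are linearly independent, so $\dim\Terw_1\ge 5$. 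Using the apparent idempotent decomposition $I = f_1 + f_2 + (E_{X_1}-\tfrac1{n-1}J_{X_1})$ with $f_1,f_2$ the two rank-one primitive idempotents supported on $\mathrm{span}(E_1,AE_1)$ coming from the eigenvalues $\pm\sqrt{n-1}$, and noting $E_1\Terw_1(\pm\sqrt{n-1}\text{-part})\ne 0$, one sees $E_1$ is equivalent to the eigenvalue-$0$... no: $E_1$ is equivalent to exactly the two non-zero-eigenvalue primitive idempotents, giving a simple module of dimension $2$, while the eigenvalue-$0$ eigenspace (dimension $n-2$, all entries killed by $E_1$) contributes one $1$-dimensional simple module by Corollary \ref{one2}; this matches Theorem \ref{one4} with $D=1$, $t=1$, giving $\Terw_1\cong M_2(\C)\oplus\C$ of dimension $4+1=5$.

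Finally I would assemble: $5\le\dim\Terw_1\le\dim\Terw_2\le\dim\Terw_3\le\dim\Terw_4=5$ forces equality throughout, and since $\Terw_1\cong M_2(\C)\oplus\C$ and $\Terw_4\cong M_2(\C)\oplus\C$ are squeezed between equal-dimensional semisimple algebras, all four are isomorphic to $M_2(\C)\oplus\C$. The only step needing genuine care is the block-structure claim — either justify $\Terw_1\cong M_2(\C)\oplus\C$ cleanly via Theorem \ref{one4} (checking $D=1$ and that exactly the zero eigenvalue has multiplicity $>1$, so $t=1$), or justify $\Terw_4\cong M_2(\C)\oplus\C$ via the character decomposition of $\rho$; I expect the character-theoretic route for $\Terw_4$ to be the cleanest, with the dimension squeeze doing the rest, so the main obstacle is merely bookkeeping rather than any real difficulty. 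I would present the proof by first reducing to $\ell=4$ using distance-transitivity (Lemma \ref{basic2}) and Lemma \ref{basic1}, then do the character computation, then handle $\Terw_1$ via Theorem \ref{one4}, then close with the chain of inclusions.
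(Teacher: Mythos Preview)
Your core approach---showing $\dim\Terw_1\ge 5$ by exhibiting five explicit linearly independent matrices, and $\dim\Terw_4=5$ by counting the five $G_1$-orbits on $X\times X$, then squeezing---is exactly what the paper does. Your character-theoretic computation of the block structure of $\Terw_4$ is a valid alternative and slightly more informative than what the paper uses.

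One caveat: your proposed route via Theorem \ref{one4} does \emph{not} apply here. That theorem requires the number of distinct eigenvalues to equal $D+1$, where $D$ is the diameter with respect to $x_0$. With base vertex $1$ the diameter is $D=1$, but $K_{1,n-1}$ has three eigenvalues $\pm\sqrt{n-1},\,0$, so the hypothesis fails. (This is exactly why the paper invokes Theorem \ref{one4} for base vertex $2$, where $D=2$, but not for base vertex $1$.) Your character route still salvages the block structure, so the overall proof survives.

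For comparison, the paper's argument for the block structure is slicker than either of your options: once you know the common algebra is semisimple of dimension $5$ and non-commutative (since $E_1A\neq AE_1$), the only possibility is $M_2(\C)\oplus\C$, because $5=4+1$ is the unique way to write $5$ as a sum of squares with a summand larger than $1$. No characters or module theory needed.
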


\begin{proof}
  It is clear that $E_{1,1} \in \Terw_1$, $\sum_{i=2}^{n} E_{1,i} = E_{1,1} A \in \Terw_1$,
  $\sum_{i=2}^{n} E_{i,1} = A E_{1,1} \in \Terw_1$ and $\sum_{i=2}^{n} \sum_{j=2}^{n} E_{i,j} = (AE_{1,1})(E_{1,1}A) \in \Terw_1$.
Also we have $\sum_{i=2}^{n} E_{i,i} = I_{X} - E_{1,1} \in \Terw_1$. 
Thus, we know that $\dim \Terw_1 \geq 5$ since these matrices are linearly independent.

The stabilizer $G_{1}$ of $1$ in $G = \Aut(K_{1,n-1})$ is the symmetric group on $\{2, 3, \dots, n\}$.
Therefore the orbits of $G_1$ on $X \times X$ are $\{(1,1)\}, \{(1,i): i=2, \dots, n\}, \{(i,1): i=2,\dots,n\}, \{(i,i): i=2,\dots,n\}$, and $\{(i,j): i,j = 2,\dots,n, i \neq j\}$.
Thus $\dim \Terw_4 = 5$.
This means that $\Terw_1 = \Terw_2 = \Terw_3 = \Terw_4$.
Since $\Terw_\ell$ ($\ell = 1,2,3,4$) is a non-commutative $5$-dimensional semisimple $\C$-algebra, it follows that $\Terw_\ell \cong M_2(\C) \oplus \C$.
\end{proof}

\begin{thm}
  For $n \geq 4$,
  $\Terw_\ell(K_{1,n-1},2) \cong M_{3}(\C) \oplus \C$ for $\ell = 1,2,3,4$.
\end{thm}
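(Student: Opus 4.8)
The plan is to mimic the two previous star-graph arguments, keeping track of which vertices are now distinguished because the base vertex is the leaf $2$ rather than the center $1$. With base vertex $2$, the distance partition of $K_{1,n-1}$ is $X_0 = \{2\}$, $X_1 = \{1\}$, $X_2 = \{3,4,\dots,n\}$, so the diameter with respect to $2$ is $D = 2$. First I would produce an explicit linearly independent family inside $\Terw_1(K_{1,n-1},2)$ of size $10$: the three diagonal idempotents $E_{2,2}$, $E_{1,1} = (E_{2,2}A)(AE_{2,2})$ up to scalar, and $\sum_{i=3}^n E_{i,i} = I_X - E_{1,1} - E_{2,2}$, together with the ``off-diagonal'' blocks $E_{2,1}+E_{1,2}$-type matrices and the row/column sums $\sum_{i\ge 3}E_{1,i}$, $\sum_{i\ge 3}E_{i,1}$, $\sum_{i\ge 3}E_{2,i}$, $\sum_{i\ge 3}E_{i,2}$, $\sum_{i,j\ge 3}E_{i,j}$, each of which is a word in $A$ and $E_{2,2}$ (e.g. $E_{2,2}A = E_{2,1}$, $E_{2,2}A^2 - 0 = $ the sum over $X_2$ of $E_{2,i}$ times a scalar, etc.). This gives $\dim\Terw_1 \ge 10$.

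Next I would compute $\dim\Terw_4$ via Lemma \ref{basic4}: the stabilizer $G_2$ of the leaf $2$ in $\Aut(K_{1,n-1})$ is the symmetric group on $\{3,4,\dots,n\}$ (it must fix $2$ and hence fix its unique neighbor $1$). The orbits of $G_2$ on $X\times X$ are then easy to enumerate: the $9$ orbits $\{(2,2)\}$, $\{(1,1)\}$, $\{(2,1)\}$, $\{(1,2)\}$, $\{(2,i):i\ge3\}$, $\{(i,2):i\ge3\}$, $\{(1,i):i\ge3\}$, $\{(i,1):i\ge3\}$, $\{(i,i):i\ge3\}$, and $\{(i,j):i,j\ge3,\ i\ne j\}$ --- that is $10$ orbits. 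Hence $\dim\Terw_4 = 10$. Combined with the chain $\Terw_1 \subset \Terw_2 \subset \Terw_3 \subset \Terw_4$ and the lower bound $\dim\Terw_1\ge 10$, all four algebras coincide and have dimension $10$.

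Finally I would identify the abstract isomorphism type. The $10$-dimensional semisimple algebra $\Terw_4$ is isomorphic to $\bigoplus_i M_{n_i}(\C)$ with $\sum n_i^2 = 10$ and $\sum n_i = $ (number of simple constituents of the standard module $\C X$). Since $\C X$ as a $G_2$-module is the permutation module on $X = \{2\}\cup\{1\}\cup\{3,\dots,n\}$, its character is $1_{G_2}+1_{G_2}+(1_{G_2}+\chi)$ where $\chi$ is the irreducible $(n-2)$-dimensional constituent of the natural permutation character of $S_{n-2}$ (valid for $n\ge 4$, so that $\chi$ is genuinely irreducible and nonzero); thus $\C X \cong 3\,S_0 \oplus 1\,S_1$ with $\dim S_0 = 3$, $\dim S_1 = 1$. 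Therefore $\Terw_4 \cong M_3(\C)\oplus M_1(\C) = M_3(\C)\oplus\C$, which forces the same for $\Terw_1,\Terw_2,\Terw_3$. The only mild obstacle is bookkeeping: making sure the $10$ matrices I write down in $\Terw_1$ really are expressible as polynomials in $A$ and $E_{2,2}$ and really are linearly independent --- but since distinct ones have disjoint supports among the matrix units $E_{x,y}$, independence is immediate, and each is a short explicit word, so this is routine.
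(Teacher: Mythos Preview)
Your overall sandwich strategy --- show $\dim\Terw_1\ge 10$, show $\dim\Terw_4=10$ by counting $G_2$-orbits on $X\times X$, and conclude equality throughout --- is correct and is exactly the paper's approach. The orbit computation is identical to the paper's.

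The only substantive difference is in how you treat $\Terw_1$. You construct ten explicit words in $A$ and $E_{2,2}$ and then, at the end, identify the isomorphism type by decomposing the permutation character of $G_2$. The paper instead invokes Theorem~\ref{one4}: the star graph has exactly three eigenvalues $\pm\sqrt{n-1},\,0$, and the diameter from the leaf $2$ is $D=2$, so the hypothesis ``exactly $D+1$ eigenvalues'' is met; since only the eigenvalue $0$ has multiplicity $n-2>1$ we get $t=1$, whence $\Terw_1\cong M_{3}(\C)\oplus\C$ directly. This is shorter because it delivers both $\dim\Terw_1=10$ and the isomorphism type in one stroke, making your final permutation-character paragraph unnecessary. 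Your route is more hands-on and parallels the base-vertex-$1$ argument, which is a reasonable choice; just be aware that Theorem~\ref{one4} was set up precisely to shortcut this.

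Two small slips to fix if you write it up your way: $(E_{2,2}A)(AE_{2,2})=E_{2,2}$, not $E_{1,1}$; you want $(AE_{2,2})(E_{2,2}A)=E_{1,2}E_{2,1}=E_{1,1}$. And in your last paragraph the labels are tangled: what matters is that the \emph{multiplicity} of the trivial $G_2$-character in $\C X$ is $3$ and that of $\chi$ is $1$, which gives $\Terw_4\cong M_3(\C)\oplus\C$ via the double centralizer; the dimensions of the $G_2$-irreducibles themselves are $1$ and $n-3$, not $3$ and $1$.
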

\begin{proof}
 In this case, we can apply Theorem \ref{one4}. So we obtain that $\Terw_1 \cong M_{3}(\C) \oplus \C$ and $\dim \Terw_1 = 10$.

The stabilizer $G_2$ of $2$ in $G = \Aut(K_{1,n-1})$ is the symmetric group on $\{3, 4, \dots, n\}$,
 and the orbits of $G_2$ on $X \times X$ are $\{(1,1)\}$, $\{(1,2)\}$, $\{(2,1)\}$, $\{(2,2)\}$, $\{(1,i): i = 3, \dots, n\}$, $\{(i,1): i = 3, \dots, n\}$, $\{(2,i): i = 3, \dots, n\}$, $\{(i,2): i = 3, \dots, n\}$, 
$\{(i,i): i = 3, \dots, n\}$, and $\{(i,j): i, j = 3, \dots, n, i \neq j\}$.
Therefore we know that $\dim \Terw_4 = 10$ and $\Terw_1 = \Terw_2 = \Terw_3 = \Terw_4$.
\end{proof}

\subsection{Cycle graphs}

Let $C_n$ be the cycle graph with $n$ vertices ($n \geq 3$) \cite[Section 1.4.3]{MR2882891}.
The graph $C_n$ defines a $P$- and $Q$-polynomial association scheme
and the structure of the Terwilliger algebra $\Terw_2$ is determined in \cite{MR1229433}.
We set the vertex set $X = \{1, 2, \dots, n\}$ and the edge set $\{\{1,2\},\{2,3\}, \dots, \{n-1,n\}, \{n,1\}\}$.
By symmetry, it is enough to consider only the case that the base vertex is $1$.
Let $\xi$ be a primitive $n$-th root of unity in $\C$.
The diameter of $C_n$ is $ D= \lfloor \frac{n}{2} \rfloor$.
The eigenvalues of the adjacency matrix $A$ are $\lambda_i := \xi^i+\xi^{-i} \quad (i = 0, 1, \dots, D)$.
The multiplicity of $\lambda_0$ is $1$ and the multiplicity of $\lambda_i$ is $2$ for $i=1, \dots, D-1$.
The multiplicity of $\lambda_D$ is $1$ if $n$ is even and $2$ if $n$ is odd.

We remark that $C_n$ is a distance-regular graph and so can be applied Theorem \ref{one4}.

\begin{thm}
  $\Terw_0(\Gamma,x_0) \cong \bigoplus_{i=1}^{D+1} \C$ and 
\[
 \Terw_1(\Gamma,x_0) \cong \begin{cases}
		M_{D+1}(\C) \oplus \bigoplus_{i=1}^{D-1} \C & \text{if $n$ is even,} \\
		M_{D+1}(\C) \oplus \bigoplus_{i=1}^{D} \C & \text{if $n$ is odd,} \\
	       \end{cases}
\]
where $D = \lfloor \frac{n}{2} \rfloor$ is the diameter of $C_n$.
\end{thm}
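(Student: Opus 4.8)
The plan is to read both isomorphisms directly off the eigenvalue data of $C_n$ recorded in the paragraph preceding the theorem, feeding that data into Theorem \ref{basic0} for $\Terw_0$ and into Theorem \ref{one4} for $\Terw_1$.

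For $\Terw_0$, first I would check that the $D+1$ numbers $\lambda_i = \xi^i + \xi^{-i} = 2\cos(2\pi i/n)$ ($i = 0,1,\dots,D$) are pairwise distinct: this holds because $t \mapsto 2\cos(2\pi t/n)$ is strictly decreasing on $[0, n/2]$ and $D = \lfloor n/2 \rfloor \le n/2$. Hence the adjacency matrix $A$ of $C_n$ has exactly $D+1$ distinct eigenvalues, and Theorem \ref{basic0} gives $\Terw_0(\Gamma,x_0) \cong \bigoplus_{i=1}^{D+1} \C$ at once.

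For $\Terw_1$, I would invoke Theorem \ref{one4}. By Lemma \ref{basic1} and the vertex-transitivity of $C_n$ we may take $x_0 = 1$; the diameter of $C_n$ with respect to $x_0$ is $D = \lfloor n/2 \rfloor$ (as recalled in the text), and $C_n$ has \emph{exactly} $D+1$ distinct eigenvalues $\lambda_0,\dots,\lambda_D$, with multiplicities $m_0 = 1$, $m_i = 2$ for $1 \le i \le D-1$, and $m_D = 1$ if $n$ is even, $m_D = 2$ if $n$ is odd. Therefore the quantity $t = \sharp\{i : m_i > 1\}$ appearing in Theorem \ref{one4} equals $D-1$ when $n$ is even and $D$ when $n$ is odd, and Theorem \ref{one4} yields exactly the claimed $M_{D+1}(\C) \oplus \bigoplus_{i=1}^{t} \C$ in each case.

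In short, the proof is essentially a bookkeeping application of results already established; the only point requiring (routine) verification is that the stated eigenvalue/multiplicity list for $C_n$ is correct and in particular that $C_n$ has exactly $D+1$ distinct eigenvalues, so that the hypotheses of Theorem \ref{one4} are genuinely met. This is the standard diagonalization of the circulant adjacency matrix, and I do not anticipate any real obstacle.
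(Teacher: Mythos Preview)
Your proposal is correct and follows exactly the same approach as the paper, which simply states that the result is clear from Theorem \ref{basic0} and Theorem \ref{one4}. You have merely filled in the routine verifications (distinctness of the $\lambda_i$ and the count of $t$) that the paper leaves implicit.
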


\begin{proof}
It is clear from Theorem \ref{basic0} and Theorem \ref{one4}.
\end{proof}

Next, we consider the structures of $\Terw_2, \Terw_3$ and $\Terw_4$.

\begin{thm} \label{T_4}
  We have
\[
  \Terw_2(\Gamma,x_0) = \Terw_3(\Gamma,x_0) = \Terw_4(\Gamma,x_0) \cong \begin{cases}
    M_{D+1}(\C) \oplus M_{D-1}(\C) & \text{if $n$ is even,} \\
    M_{D+1}(\C) \oplus M_{D}(\C) & \text{if $n$ is odd,}
  \end{cases},
\]
where $D = \lfloor \frac{n}{2} \rfloor$ is the diameter of $C_n$.
\end{thm}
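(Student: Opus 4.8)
The plan is to show that the standard module $\C X$ decomposes as a direct sum of exactly two non-isomorphic simple $\Terw_2(C_n,x_0)$-modules, and that this already matches $\Terw_4$. Since $C_n$ is distance-transitive, Lemma \ref{basic2} gives $\Terw_2(C_n,x_0)=\Terw_3(C_n,x_0)$, so it is enough to compute $\Terw_2$ and $\Terw_4$ and to see they coincide. The group $G=\Aut(C_n)$ is dihedral of order $2n$, and since $C_n$ is vertex-transitive, $G_{x_0}=\langle g\rangle$ has order $2$, where $g$ is the reflection fixing $x_0$; labelling the vertices cyclically with $x_0\leftrightarrow 0$, the permutation $g$ sends $k\mapsto -k$, interchanges the two neighbours $x_1,g(x_1)$ of $x_0$, and fixes $1$ vertex if $n$ is odd and $2$ vertices (namely $x_0$ and its antipode) if $n$ is even. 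Exactly as in the proof of Theorem \ref{prop_med}, the permutation character $\rho$ of $G_{x_0}$ on $X$ has $\rho(1)=n$ and $\rho(g)\in\{1,2\}$, so $\rho=a\,1_{G_{x_0}}+b\,\chi_{G_{x_0}}$ with $a=(n+\rho(g))/2$ and $b=(n-\rho(g))/2$; this gives $a=D+1$, and $b=D$ if $n$ is odd, $b=D-1$ if $n$ is even, whence $\Terw_4\cong M_{D+1}(\C)\oplus M_b(\C)$.

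Next I would analyse the two $g$-eigenspaces $(\C X)^{\pm}=\{\bs v\in\C X:\bs v g=\pm\bs v\}$; these are the trivial- and sign-isotypic components just found, so $\dim(\C X)^+=D+1$ and $\dim(\C X)^-=b$. Every element of $\Terw_4\supseteq\Terw_2$ commutes with the permutation matrix of $g$, so both $(\C X)^{\pm}$ are $\Terw_2$-submodules and $\C X=(\C X)^+\oplus(\C X)^-$ as $\Terw_2$-modules. The crux is that each summand is a \emph{simple} $\Terw_2$-module. For this one uses the explicit eigenbasis of $A$ for $C_n$: $(\C X)^+$ is spanned by $A$-eigenvectors with pairwise distinct eigenvalues all having nonzero $x_0$-coordinate (the all-ones vector, the vectors $(\cos 2\pi i k/n)_k$, and for $n$ even the alternating vector), while $(\C X)^-$ is spanned by $A$-eigenvectors with pairwise distinct eigenvalues whose $x_0$-coordinates vanish but whose $x_1$-coordinate is nonzero (the vectors $(\sin 2\pi i k/n)_k$). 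Given $\bs 0\ne\bs w\in(\C X)^+$, a suitable polynomial in $A$ (Lagrange interpolation at the distinct eigenvalues) extracts from $\bs w$ one basis eigenvector; then $E_{X_0}\in\Terw_2$ carries it to a nonzero multiple of $e_{x_0}$, and since $e_{x_0}$ has nonzero component along every basis eigenvector of $(\C X)^+$, polynomials in $A$ recover all of them, so $\bs w\,\Terw_2=(\C X)^+$. The same argument for $(\C X)^-$ uses $E_{X_1}\in\Terw_2$ instead: on $(\C X)^-$ the idempotent $E_{X_1}$ sends every vector to a multiple of $e_{x_1}-e_{g(x_1)}$, and the latter has nonzero component along every $(\sin 2\pi i k/n)_k$, so a single nonzero vector again generates the whole space. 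Hence $(\C X)^{\pm}$ are simple, and they are non-isomorphic because $D+1\ne b$.

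Finally, $\C X$ is a faithful $\Terw_2$-module that is a direct sum of two non-isomorphic simple modules of dimensions $D+1$ and $b$, so Wedderburn's theorem gives $\Terw_2\cong M_{D+1}(\C)\oplus M_b(\C)$, which has the same dimension as $\Terw_4$. Together with $\Terw_2\subseteq\Terw_3\subseteq\Terw_4$ this forces $\Terw_2=\Terw_3=\Terw_4\cong M_{D+1}(\C)\oplus M_b(\C)$, which is the asserted structure. I expect the main obstacle to be the simplicity of the ``minus'' module $(\C X)^-$: one really needs the explicit sine-coordinates to know that a single application of $E_{X_1}$ yields a vector supported on every $\sin$-eigenvector, so that the $\C\langle A\rangle$-submodule it generates fills out all of $(\C X)^-$. (Alternatively, one could quote the description of $\Terw_2(C_n,x_0)$ from \cite{MR1229433} and just compare dimensions with the computation of $\Terw_4$ above; the self-contained route is as sketched.)
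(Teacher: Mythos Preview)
Your argument is correct. The computation of $\Terw_4$ via the permutation character of the order-$2$ stabilizer is exactly what the paper does (and matches Theorem~\ref{prop_med}), and the inclusion chain plus dimension count is also the paper's way of concluding $\Terw_2=\Terw_3=\Terw_4$.

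Where you diverge is in the determination of $\Terw_2$. The paper simply quotes \cite[Example~6.1~(23),(24)]{MR1229433} for the structure of the Terwilliger algebra of $C_n$, whereas you supply a self-contained proof: you split $\C X$ into the $\pm 1$-eigenspaces of the reflection $g$, identify explicit $A$-eigenbases (cosines on $(\C X)^+$, sines on $(\C X)^-$), and then show each piece is a simple $\Terw_2$-module by the ``extract an eigenvector with a polynomial in $A$, hit it with the right $E_{X_i}$, then rebuild everything with polynomials in $A$'' trick. This works because on each $(\C X)^\pm$ the restriction of $A$ has simple spectrum and the anchor vectors $e_{x_0}$ (resp.\ $e_{x_1}-e_{g(x_1)}$) have nonzero component along every eigenvector of that piece, which you verified. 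Faithfulness of $\C X$ then forces $\Terw_2\cong M_{D+1}(\C)\oplus M_b(\C)$ via Wedderburn. Your route is longer but removes the external dependence on \cite{MR1229433}; the paper's route is a one-line citation. Both arrive at the same place, and your parenthetical remark at the end already acknowledges this alternative.
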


\begin{proof}
By \cite[Example 6.1 (23), (24)]{MR1229433},
we have that $\Terw_2 \cong M_{D+1}(\C) \oplus M_{D-1}(\C)$ if $n$ is even
and $\Terw_2 \cong M_{D+1}(\C) \oplus M_{D}(\C)$ if $n$ is odd.

We determine the structure of $\Terw_4$.
First, we consider the case of $n=2D$, namely $n$ is even.
Let $g = (2,2D)(3, 2D-1) \cdots (D,D+2)$.
Then the stabilizer $G_1$ of $1$ in the group $G = \Aut(C_n)$ is $\langle g \rangle$.
For the permutation character $\rho$, we know that $\rho(1) = 2D$ and $\rho(g) = 2$.
So $\rho = (D+1) 1_{G_1} + (D-1) \chi_{G_1}$,
where $1_{G_1}$ is the trivial character and $\chi_{G_1}$ is the non-trivial irreducible character of $G_1$.
Therefore, $\Terw_4 \cong M_{D+1}(\C) \oplus M_{D-1}(\C)$.

Next, we consider the case of $n=2D+1$.
Let $h = (2,2D+1)(3, 2D) \cdots (D+1,D+2)$.
Then the stabilizer $H_1$ of $1$ in $H = \Aut(C_n)$ is $\langle h \rangle$.
Let $\rho$ be the permutation character.
Since it follows that $\rho(1) = 2D+1$ and $\rho(h) = 1$,
we have $\rho = (D+1) 1_{H_1} + D \varphi_{H_1}$,
where $1_{H_1}$ is the trivial character and $\varphi_{H_1}$ is the non-trivial irreducible character of $H_1$.
Thus $\Terw_4 \cong M_{D+1}(\C) \oplus M_{D}(\C)$.
\end{proof}

\section{Paley graphs: Examples for $\Terw_3 \neq \Terw_4$}\label{sec:Paley}

In this section, we consider Paley graphs \cite[page 129]{MR1271140}, \cite{Jones-Paley}.
We show in Corollary \ref{ex:paley} that Paley graphs with $p$ vertices ($p$ is a prime such that $p \geq 7$) give examples for $\Terw_3 \neq \Terw_4$.

Let $\Paley(p^a)=(X,E)$ be the Paley graph with $|X|=p^a\equiv 1\pmod{4}$,
where $p$ is a prime number.
It is a strongly regular graph, and thus we can apply Theorem \ref{one4} for $\Terw_1$.

We describe the automorphism group $G=\Aut(\Paley(p^a))$.
The vertex set $X$ is identified with the finite field $\GF(p^a)$.
We fix a primitive element $\xi$ of $\GF(p^a)$.
The group $G$ is generated by $\mu_\alpha:x\mapsto x+\alpha$ ($\alpha\in \GF(p^a)$),
$\sigma:x\mapsto x\xi^2$, and $\tau:x\mapsto x^p$.
The orders of $\sigma$ and $\tau$ are $(p^a-1)/2$ and $a$, respectively.
The stabilizer of $0$ in $G$ is $G_0=\langle \sigma,\tau\rangle$.
The Paley graph is distance-transitive and so the structure of $\Terw_\ell(\Paley(p^a), x_0)$ ($\ell=0,1,2,3,4$)
does not depend on a choice of the base vertex $x_0$.
Thus we fix the base vertex $0$ and
write $\Terw_\ell(\Paley(p^a))$ for $\Terw_\ell(\Paley(p^a), 0)$.
We keep these notations in this section.

\begin{thm}\label{paley1}
  $\Terw_1(\Paley(p^a))\cong M_3(\C) \oplus \C \oplus \C$ and $\dim \Terw_1(\Paley(p^a))=11$.
\end{thm}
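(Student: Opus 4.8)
The plan is to apply Theorem~\ref{one4} directly, so the real work is verifying its hypotheses for a Paley graph on $p^a$ vertices with base vertex $0$. Recall that $\Paley(p^a)$ is a strongly regular graph, hence has diameter $D=2$, and its adjacency matrix $A$ has exactly three distinct eigenvalues: $k=(p^a-1)/2$ with multiplicity $m_0=1$, and the two ``restricted'' eigenvalues $(-1\pm\sqrt{p^a})/2$, each with multiplicity $(p^a-1)/2$. Thus the number of distinct eigenvalues equals $D+1=3$, so the first hypothesis of Theorem~\ref{one4} is met, and among these three eigenvalues exactly two have multiplicity greater than $1$ (the two restricted eigenvalues, provided $p^a\geq 5$, which holds since $p^a\equiv 1\pmod 4$ and $p^a\neq 1$). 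Hence $t=\sharp\{i:m_i>1\}=2$.

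With $D=2$ and $t=2$, Theorem~\ref{one4} gives $\Terw_1(\Paley(p^a))\cong M_{D+1}(\C)\oplus\bigoplus_{i=1}^t\C = M_3(\C)\oplus\C\oplus\C$, and $\dim\Terw_1 = (D+1)^2+t = 9+2 = 11$. So the proof is essentially a two-line invocation once the eigenvalue data is in place.

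The only genuine point to justify carefully is the multiplicity computation for the restricted eigenvalues, i.e.\ that each of $(-1\pm\sqrt{p^a})/2$ really occurs with multiplicity strictly bigger than one (so that $t=2$ and not $t=0$ or $t=1$). This follows from the standard strongly regular graph parameter count: for a primitive $(n,k,\lambda,\mu)$-strongly regular graph the two restricted eigenvalues $r>s$ have multiplicities $f,g$ with $f+g=n-1$ and $f,g\geq 1$; for a Paley graph on $n=p^a$ vertices one has the symmetric situation $\lambda=(n-5)/4$, $\mu=(n-1)/4$, giving $f=g=(n-1)/2\geq 2$ whenever $n\geq 5$. Since $n=p^a\equiv 1\pmod 4$ forces $n\geq 5$, both multiplicities exceed $1$. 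I expect this to be the ``main obstacle'' only in the bookkeeping sense; it is well known and could even be cited (e.g.\ from \cite{MR1271140}).

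A small alternative, if one prefers to avoid quoting strongly regular graph eigenvalue formulas, is to note that $\Paley(p^a)$ is self-complementary and vertex-transitive, so its complement has the same spectrum; combined with $AJ=kJ$ and $\operatorname{tr}A=0$ this pins down that the two non-principal eigenvalues have equal multiplicity $(n-1)/2$. Either way, once $D=2$ and $t=2$ are established, Theorem~\ref{one4} closes the argument immediately.
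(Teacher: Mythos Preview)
Your proof is correct and follows exactly the same approach as the paper: record the three eigenvalues $(p^a-1)/2$, $(-1\pm\sqrt{p^a})/2$ with multiplicities $1$, $(p^a-1)/2$, $(p^a-1)/2$, observe that $D=2$ and $t=2$, and apply Theorem~\ref{one4}. The paper's proof is essentially your first two paragraphs without the additional justification of the multiplicities.
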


\begin{proof}
  The graph $\Paley(p^a)$ has eigenvalues $(p^a-1)/2$, $(-1+\sqrt{p^a})/2$, $(-1-\sqrt{p^a})/2$
  with multiplicities $1$, $(p^a-1)/2$, $(p^a-1)/2$, respectively.
  By Theorem \ref{one4}, we have the result.
\end{proof}

\begin{thm}\label{paley2}
  $\dim \Terw_4(\Paley(p^a))\leq 2p^a+3$.
  For the case $|X|=p$, we have the equality
  $\dim \Terw_4(\Paley(p))= 2p+3$ and 
  $\Terw_4(\Paley(p))\cong M_3(\C)\oplus\bigoplus_{i=1}^{(p-3)/2} M_{2}(\C)$.
\end{thm}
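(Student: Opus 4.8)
The plan is to compute $\dim \Terw_4(\Paley(p^a))$ via Lemma \ref{basic4}, which says this dimension equals the number of $G_0$-orbits on $X \times X$, where $G_0 = \langle \sigma, \tau \rangle$ is the stabilizer of $0$. Since $\mu_\alpha$ acts transitively on $X$ and commutes appropriately with the action, the number of $G_0$-orbits on $X \times X$ equals the number of $G_0$-orbits on the $G$-orbits of the form $\{(0,y) : y \in X\}$ — more precisely, by standard orbit-counting for transitive actions, $\dim \Terw_4 = 1 + (\text{number of } G_0\text{-orbits on } X \setminus \{0\}) + (\text{orbits on pairs with distinct nonzero, non-collinear structure})$. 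Cleaner: the $G$-orbits on $X \times X$ are the diagonal and the edge/non-edge relations, and refining by $G_0$ amounts to counting $G_0$-orbits on $X$ (giving the ``row through $0$'' count) together with the $G_0$-orbits on ordered pairs $(x,y)$ with $x \neq 0$. I would reduce, using the translation subgroup, to: $\dim \Terw_4 = \sum_{x} (\text{number of } G_{0,x}\text{-orbits on } X)$ summed over orbit representatives $x$ of $G_0$ on $X$, or most simply invoke that $\dim\Terw_4$ equals the number of $G_0$-orbits on $X\times X$ and compute those directly.

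For the general bound $\dim \Terw_4 \le 2p^a + 3$: since $\Paley(p^a)$ is strongly regular, this is immediate from Proposition \ref{dimA2} applied with $n = p^a$ (noting $\Terw_4 \supset \Terw_2$ does not help directly, but in fact $\Terw_4$ for a distance-transitive strongly regular graph — we need the bound on $\Terw_4$, not $\Terw_2$). Actually I would argue: by distance-transitivity $\Terw_2 = \Terw_3$ (Lemma \ref{basic2}), and more carefully bound $\Terw_4$ via its decomposition $\dim\Terw_4 = \sum_{0\le i,j\le 2}\dim E_{X_i}\Terw_4 E_{X_j}$, where the ``corner'' algebras $E_{X_i}\Terw_4 E_{X_i}$ for $i=1,2$ are the centralizer algebras of $G_0$ acting on $X_i$, hence have dimension equal to the rank of that permutation action, which is at most $|X_i|$. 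This gives $\dim \Terw_4 \le 5 + |X_1| + |X_2| + 2\min(|X_1|,|X_2|) \le 5 + (p^a - 1) = p^a + 4$ — too weak; the factor $2\min$ is the issue. The correct bound uses that $\sigma$ acts on $X_1$ and on $X_2$ each as a single cycle-like transitive action making those corner algebras small, or one notes the Schur-ring structure: I would instead bound $\dim E_{X_1}\Terw_4 E_{X_1}$ and $\dim E_{X_2}\Terw_4 E_{X_2}$ by the number of $G_0$-suborbits inside $X_1, X_2$ and use $|X_1| = |X_2| = (p^a-1)/2$, giving $\dim\Terw_4 \le 5 + 2\cdot\frac{p^a-1}{2} + 2\cdot\frac{p^a-1}{2}$ — wait, that's $5 + 2(p^a-1) = 2p^a+3$ only if the off-diagonal blocks contribute at most $\min$ of the two, each $\le (p^a-1)/2$, and there are two of them: $5 + \frac{p^a-1}{2} + \frac{p^a-1}{2} + 2\cdot\frac{p^a-1}{2} = 5 + 2(p^a-1) = 2p^a + 3$. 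Good — so the general bound follows exactly as in Proposition \ref{dimA2} with the substitution $n \to p^a$, $k \to (p^a-1)/2$.

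For the exact computation when $|X| = p$ (so $a = 1$, $\tau = \mathrm{id}$, $G_0 = \langle \sigma \rangle$ cyclic of order $(p-1)/2$): I would directly count $G_0$-orbits on $X \times X$. The orbit of $(0,0)$ is a singleton; the orbits $\{(0,y): y \neq 0\}$ split under $\langle\sigma\rangle$ into the squares (one orbit, $X_1$) and non-squares (one orbit, $X_2$), and symmetrically for $\{(x,0): x\neq 0\}$, giving $2$ more. For pairs $(x,y)$ with $x \neq 0$, I would use $x^{-1}$-scaling: the pair $(x,y)$ with $x\neq 0$ is $\langle\sigma\rangle$-equivalent to some $(x', y')$; since $\sigma$-orbits on $X^* = X\setminus\{0\}$ are the two cosets of squares, and the action on the ratio $y/x$ (when $x\ne 0$) is what matters together with whether $x$ is a square. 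Concretely $\dim\Terw_4$ minus the $5$ "boundary-involving" orbits equals the number of $\langle\sigma\rangle$-orbits on $\{(x,y): x,y\in X^*\}$ plus diagonal-in-$X^*$ pieces; scaling by the square subgroup $\langle\xi^2\rangle$ of order $(p-1)/2$ acting freely on $X^* \times \{1\}$-type data, one finds the orbit count on $X^*\times X^*$ is $2\cdot(p-1) \cdot \frac{(p-1)/2 \cdot \text{something}}{}$... — here I would carefully set it up as: orbits of $\langle\xi^2\rangle$ on $\{(x,y) \in X^*\times X^*\}$, which has $(p-1)^2$ elements acted on freely (since $\xi^2 \ne 1$ and $x \ne 0$), giving $(p-1)^2/((p-1)/2) = 2(p-1)$ orbits, minus overcounting on the diagonal $x = y$ which contributes... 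Putting it together carefully yields $\dim\Terw_4 = 2p+3$. The main obstacle is this careful orbit bookkeeping on $X^* \times X^*$ under the square-multiplication action, keeping track of the diagonal and the two square-classes; once the count is $2p+3$, the algebra structure follows because $\dim\Terw_4 = 2p+3$ together with $\Terw_1 \subseteq \Terw_4$ (Theorem \ref{paley1} gives $M_3(\C)$ as a summand with $E_0$ primitive therein) and the thinness/commutativity of the corner algebras forces the remaining simple modules to be $2$-dimensional: there is the principal module of dimension $3$ and then $(p-3)/2$ modules of dimension $2$, matching $\dim = 9 + 4\cdot\frac{p-3}{2} = 9 + 2(p-3) = 2p+3$. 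I would verify the module dimensions by the same corner-algebra analysis as in Proposition \ref{dimA2}: $E_{X_1}\Terw_4 E_{X_1}$ is commutative of dimension $(p-1)/2$, so $E_{X_1}\Terw_4$ is a sum of $(p-1)/2$ inequivalent simples, one of which (the one meeting $E_{X_0}$) is the $3$-dimensional principal module and the rest are $2$-dimensional, giving exactly $\bigoplus_{i=1}^{(p-3)/2} M_2(\C)$ beyond the $M_3(\C)$.
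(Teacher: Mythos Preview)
Your approach is correct and genuinely different from the paper's. The paper works concretely: it fixes the ordering $\{0,\ 1,\xi^2,\dots,\xi^{p^a-3},\ \xi,\xi^3,\dots,\xi^{p^a-2}\}$ of $X$, writes down the permutation matrix of $\sigma$, and observes that the centralizer $\mathcal{U}$ of $H=\langle\sigma\rangle$ in $M_X(\C)$ consists of matrices built from four $k\times k$ circulant blocks (where $k=(p^a-1)/2$) plus five scalar parameters along the $\{0\}$-row and column, giving $\dim\mathcal{U}=5+4k=2p^a+3$; since $H\subset G_0$ this bounds $\dim\Terw_4$, with equality when $a=1$ because then $G_0=H$. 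For the algebra structure the paper explicitly decomposes $\mathcal{U}$ using the primitive idempotents $\varepsilon_0,\dots,\varepsilon_{k-1}$ of the cyclic group algebra $\C H$ sitting in each circulant block, producing subalgebras $\mathcal{A}_0\cong M_3(\C)$ and $\mathcal{A}_s\cong M_2(\C)$ for $1\le s\le k-1$.

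Your route---the block decomposition $\dim\Terw_4=\sum_{i,j}\dim E_{X_i}\Terw_4 E_{X_j}$ for the upper bound, direct counting of $\langle\sigma\rangle$-orbits on $X\times X$ for the equality when $a=1$, and a semisimple dimension count for the Wedderburn structure---reaches the same conclusions without ever writing a matrix. Two points to tighten. First, the remark that ``$\Terw_1\subset\Terw_4$ gives $M_3(\C)$ as a summand'' is not valid (Wedderburn components of a subalgebra need not persist); your second argument, via the principal module and commutativity of the corner algebras, is the correct one. Second, for the general bound you invoke Proposition~\ref{dimA2}, but that argument needs commutativity of $E_{X_i}\Terw_4 E_{X_i}$ to control the off-diagonal blocks; you should say explicitly that $\langle\sigma\rangle$ acts \emph{regularly} on each of $X_1,X_2$, so $E_{X_i}\Terw_4 E_{X_i}$ sits inside the commutative group algebra $\C\langle\sigma\rangle$ and each $E_{X_i}\Terw_4 E_{X_j}$ ($i,j\in\{1,2\}$) has dimension at most $k$. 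This regularity is exactly what the paper exploits via the circulant form, so the two proofs are close cousins: yours is more portable to other transitive settings, the paper's is more explicit and makes the $M_3\oplus M_2^{(p-3)/2}$ decomposition visible as a matrix identity.
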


\begin{proof}
  Set $k=(p^a-1)/2$.
  We consider the centralizer algebra $\mathcal{U}$ of $H=\langle\sigma\rangle\subset G_0$.
  Then $\Terw_4$ is a subalgebra of $\mathcal{U}$.
  We fix an ordering of $X=\GF(p^a)$ by
  $$\{0,\quad 1,\xi^2,\xi^4,\dots,\xi^{p^a-3},\quad \xi,\xi^3,\dots,\xi^{p^a-2}\}.$$
  Then the permutation matrix given by $\sigma$ with respect to this ordering is
  $$\left(
    \begin{array}{c|cccc|cccc}
      1 &  & & & & & & & \\
      \hline
        & 0 &1 & & & & & &\\
        &&\ddots&\ddots&&&&&\\
        &&&\ddots&1&&&&\\
        &1&&&0&&&&\\
      \hline
       &&&&&0&1&&\\
       &&&&&&\ddots&\ddots&\\
       &&&&&&&\ddots&1\\
       &&&&&1&&&0
    \end{array}
  \right).$$
  Thus the elements in $\mathcal{U}$ are of the form
  \begin{equation}\label{eq:circulant}
    \left(
    \begin{array}{c|cccc|cccc}
      a & b & \dots & \dots & b & c & \dots  & \dots & c\\
      \hline
      b'  & d_0 &d_1 & \dots & d_{k-1}& e_0& e_1& \dots&e_{k-1}\\
      \vdots  &d_{k-1}&\ddots&\ddots&\vdots&e_{k-1}&\ddots&\ddots&\vdots\\
      \vdots  &\vdots&\ddots&\ddots&d_1&\vdots&\ddots&\ddots&e_1\\
      b'  &d_1&\dots&d_{k-1}&d_0&e_1&\dots&e_{k-1}&e_0\\
      \hline
      c' &e_0'&e_1'&\dots&e_{k-1}'&d_0'&d_1'&\dots&d_{k-1}'\\
      \vdots  &e_{k-1}'&\ddots&\ddots&\vdots&d_{k-1}'&\ddots&\ddots&\vdots\\
      \vdots &\vdots&\ddots&\ddots&e_1'&\vdots&\ddots&\ddots&d_1'\\
      c' &e_1'&\dots&e_{k-1}'&e_0'&d_1'&\dots&d_{k-1}'&d_0'
    \end{array}
  \right).
  \end{equation}
  The number of parameters is $5+4k=2p^a+3$ and this is the dimension of $\mathcal{U}$.
  Thus $\dim \Terw_4\leq 2p^a+3$.
  
  Suppose $|X|=p$. In this case, $G_0=H$ and $\mathcal{U}=\Terw_4$, and thus the equality holds.
  Remark that all $E_{X_i}\Terw_4E_{X_j}$ ($i,j=1,2$) are isomorphic to the group algebra of the cyclic group of order $k$,
  though $E_{X_i}\Terw_4E_{X_j}$ are not algebras if $i\ne j$ but we identify them.
  Let $\varepsilon_0,\dots,\varepsilon_{k-1}$ be the primitive idempotent of the group algebra,
  where $\varepsilon_0$ corresponds to the trivial representation, and set
  $$\mathcal{A}_0=
  \left\{\left(\begin{array}{c|ccc|ccc}
          a&b&\dots&b&c&\dots&c\\
          \hline
          b'&&&&&&\\
          \vdots&&d \varepsilon_0&&& e  \varepsilon_0&\\
          b'&&&&&&\\
          \hline
          c'&&&&&&\\
          \vdots&&e' \varepsilon_0&&& d'  \varepsilon_0&\\
          c'&&&&&&\\
        \end{array}\right)\right\},\ 
      \mathcal{A}_s=
      \left\{\left(\begin{array}{c|ccc|ccc}
              0&0&\dots&0&0&\dots&0\\
              \hline
              0&&&&&&\\
              \vdots&&d \varepsilon_s&&& e  \varepsilon_s&\\
              0&&&&&&\\
              \hline
              0&&&&&&\\
              \vdots&&e' \varepsilon_s&&& d'  \varepsilon_s&\\
              0&&&&&&\\
            \end{array}\right)\right\},$$
  for $1\leq s\leq k-1$.
  Then $\mathcal{U}=\bigoplus_{s=0}^{k-1}\mathcal{A}_s$ as algebras, and
  $\mathcal{A}_0\cong M_3(\C)$, $\mathcal{A}_s\cong M_2(\C)$ for $1\leq s\leq k-1$.
  We have $\Terw_4\cong M_3(\C)\oplus\bigoplus_{i=1}^{(p-3)/2} M_{2}(\C)$.
\end{proof}

\begin{remark}
  The equality in Theorem \ref{paley2} does not hold for $p^a$, $a>1$. For example, 
  $$\begin{array}{c||c|c|c|c|c|c}
      p^a & 9 & 25 & 49 & 81 & 121 & 125 \\
      \hline
      \dim\Terw_4 & 15 & 33 & 59 & 51 & 135 & 93\\
    \end{array}$$
\end{remark}

\begin{thm}\label{paley3}
  $\Terw_2(\Paley(p^a))=\Terw_3(\Paley(p^a))$.
\end{thm}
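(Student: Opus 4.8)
The plan is to show that the distance partition of $X = \GF(p^a)$ with respect to the base vertex $0$ coincides with the partition of $X$ into $G_0$-orbits, which immediately gives $\Terw_2 = \Terw_3$ from the definitions (both algebras are generated by $A$ together with the diagonal idempotents of a partition, and these partitions agree). For a Paley graph the diameter is $2$, so the distance partition is $X = \{0\} \cup X_1 \cup X_2$, where $X_1$ is the set of nonzero squares in $\GF(p^a)$ and $X_2$ is the set of nonsquares.

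First I would recall that $G_0 = \langle \sigma, \tau \rangle$ with $\sigma: x \mapsto x\xi^2$ and $\tau: x \mapsto x^p$, and that $\sigma$ alone acts on the nonzero elements of $\GF(p^a)$ with exactly two orbits: the nonzero squares $\{1, \xi^2, \xi^4, \dots, \xi^{p^a-3}\}$ and the nonsquares $\{\xi, \xi^3, \dots, \xi^{p^a-2}\}$, since $\xi$ is a primitive element and $\langle \xi^2 \rangle$ has index $2$ in $\GF(p^a)^\times$. Thus $\langle \sigma \rangle$ already has the three orbits $\{0\}$, $X_1$, $X_2$ on $X$. Since $G_0 \supseteq \langle \sigma \rangle$ and the $G_0$-orbits are unions of $\langle \sigma \rangle$-orbits, while on the other hand $G_0$ fixes $0$ and (being a group of graph automorphisms fixing $0$) preserves the distance-$1$ set $X_1$ and the distance-$2$ set $X_2$, the $G_0$-orbits are exactly $\{0\}, X_1, X_2$ as well. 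Hence the two partitions coincide.

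Alternatively, and even more briefly, this is an instance of Lemma \ref{basic2}: the Paley graph is distance-transitive (as already noted in the text just before Theorem \ref{paley1}), so $\Terw_2 = \Terw_3$ follows directly from that lemma without any further computation. I would present the distance-transitivity argument as the main line, citing the stated fact that $\Paley(p^a)$ is distance-transitive, and remark that the explicit orbit description above gives a self-contained verification.

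There is essentially no obstacle here; the only point that needs a sentence of care is confirming distance-transitivity, i.e. that $G_0$ acts transitively on each of $X_1$ and $X_2$ — and this is handled by the observation that the single element $\sigma$ already acts transitively on each of these two sets. So the proof reduces to invoking Lemma \ref{basic2} together with the distance-transitivity of the Paley graph.
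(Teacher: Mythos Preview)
Your proposal is correct and follows the same approach as the paper: the Paley graph is distance-transitive, so Lemma~\ref{basic2} applies directly. (The paper's printed proof cites Lemma~\ref{basic3}, which is evidently a typo for Lemma~\ref{basic2}; your citation is the correct one, and your explicit verification that the $G_0$-orbits are $\{0\}, X_1, X_2$ is a harmless elaboration.)
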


\begin{proof}
  Since $\Paley(p^a)$ is distance-transitive, the statement holds by Lemma \ref{basic3}.
\end{proof}

\begin{thm}\label{paley4}
  $\dim \Terw_2(\Paley(p^a))\leq p^a+8$.
\end{thm}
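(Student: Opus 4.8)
The plan is to squeeze the bound of Proposition~\ref{dimA2} using the cyclic symmetry already exhibited in this section. Write $q=p^a$ and $k=(q-1)/2$, and note that $k$ is even since $q\equiv 1\pmod 4$. The proof of Proposition~\ref{dimA2} shows that
\[
 \dim\Terw_2(\Paley(q))\ \le\ 5+d_1+d_2+2\min\{d_1,d_2\}\ \le\ 5+4\max\{d_1,d_2\},
\]
where $d_i=\dim E_{X_i}\Terw_2(\Paley(q))E_{X_i}$ for $i=1,2$. So it suffices to prove $d_1,d_2\le k/2+1$: this immediately gives $\dim\Terw_2\le 5+4(k/2+1)=2k+9=q+8$.

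To bound $d_i$ I would invoke Proposition~\ref{SRG}: $E_{X_i}\Terw_2 E_{X_i}=\C\langle B_i,J_i\rangle$, where $B_i=E_{X_i}AE_{X_i}$ is the adjacency matrix of the subgraph $\Gamma_i$ induced on $X_i$ (here $X_1$ is the set of nonzero squares and $X_2$ the set of nonsquares of $\GF(q)$) and $J_i=E_{X_i}J_XE_{X_i}$. The decisive observation is that $\Gamma_1$ and $\Gamma_2$ are circulant graphs: the automorphism $\sigma\colon x\mapsto x\xi^2$ fixes $0$ and acts on each of $X_1$ and $X_2$ as a single $k$-cycle, so $\langle\sigma\rangle\cong\Z/k\Z$ acts regularly on the vertex set of $\Gamma_i$ by automorphisms, i.e.\ $\Gamma_i$ is a Cayley graph on a cyclic group of order $k$. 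Hence the eigenvalues of the real symmetric matrix $B_i$ can be listed as $\theta_0,\dots,\theta_{k-1}$ with $\theta_0$ the valency of $\Gamma_i$ and $\theta_j=\theta_{k-j}$ for all $j$; as $k$ is even this produces at most $k/2+1$ distinct values, so $\dim\C[B_i]\le k/2+1$.

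Finally I would pass from $\dim\C[B_i]$ to $d_i=\dim\C\langle B_i,J_i\rangle$. Since $B_iJ_i=J_iB_i=\theta_0 J_i$ and $J_i^2=|X_i|J_i$, one has $\C\langle B_i,J_i\rangle=\C[B_i]+\C J_i$, which a priori only gives $d_i\le\dim\C[B_i]+1\le k/2+2$; the hard part is to show the ``$+1$'' is already absorbed. If $\theta_0$ is a simple eigenvalue of $B_i$, then its spectral projection (an orthogonal projection, $B_i$ being symmetric) is the orthogonal projection onto $\C\,\mathbf 1_{X_i}$, hence a scalar multiple of $J_i$; thus $J_i\in\C[B_i]$ and $d_i=\dim\C[B_i]\le k/2+1$. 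If instead $\theta_0$ has multiplicity at least $2$ in $B_i$, then $\theta_0=\theta_j$ for some $j\ne 0$, so $\theta_0$ is not an extra distinct eigenvalue and the values $\theta_1,\dots,\theta_{k-1}$ already pair up via $\theta_j=\theta_{k-j}$ into at most $k/2$ distinct numbers, whence $\dim\C[B_i]\le k/2$ and $d_i\le\dim\C[B_i]+1\le k/2+1$. Either way $d_i\le k/2+1$, and the bound follows. I expect this last case distinction about the multiplicity of the valency eigenvalue — making sure the contribution of $J_i$ does not push $d_i$ past $k/2+1$ — to be the only genuinely delicate step; everything else is bookkeeping on top of Propositions~\ref{dimA2} and~\ref{SRG} and the circulant structure of the two subconstituent graphs.
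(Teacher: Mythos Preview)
Your proof is correct and follows the same overall strategy as the paper: bound $d_i=\dim E_{X_i}\Terw_2 E_{X_i}$ by $k/2+1=(p^a+3)/4$ using the cyclic action of $\sigma$, then feed this into the decomposition from Proposition~\ref{dimA2} to get $\dim\Terw_2\le 5+4(k/2+1)=p^a+8$.

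The only difference is in how the bound $d_i\le k/2+1$ is obtained. The paper observes that since $E_{X_i}\Terw_2 E_{X_i}$ is commutative and generated by the symmetric matrices $B_i$ and $J_i$ (Proposition~\ref{SRG}), every element of it is symmetric; combined with the circulant form~(\ref{eq:circulant}) this places $E_{X_i}\Terw_2 E_{X_i}$ inside the space of symmetric circulant $k\times k$ matrices---i.e.\ the adjacency algebra of the cycle $C_k$---which has dimension $\lfloor k/2\rfloor+1=k/2+1$. This handles $B_i$ and $J_i$ simultaneously and makes your case distinction on the multiplicity of $\theta_0$ unnecessary: $J_i$ is itself a symmetric circulant, so it already lies in that $(k/2+1)$-dimensional space from the outset. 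Your eigenvalue-counting argument is a valid alternative route to the same bound, just slightly more laborious.
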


\begin{proof}
  By Proposition \ref{SRG}, $E_{X_1}\Terw_2E_{X_1}$ is commutative and
  generated by symmetric matrices.
  Thus all matrices in $E_{X_1}\Terw_2E_{X_1}$ are symmetric.
  By the form (\ref{eq:circulant}) of matrices, $E_{X_1}\Terw_2E_{X_1}$ is
  contained in the adjacency algebra of the cycle $C_{(p^a-1)/2}$.
  Therefore $d :=\dim E_{X_1}\Terw_2E_{X_1}\leq (p^a+3)/4$.
  The algebra $E_{X_1}\Terw_2E_{X_1}$ has $d$ non-isomorphic $1$-dimensional simple modules.
  The idempotent $E_{X_1}$ is a sum of $d$ non-equivalent primitive idempotents in $E_{X_1}\Terw_2E_{X_1}$.
  By Lemma \ref{lem:equividemp},
  $E_{X_1}$ is a sum of $d$ non-equivalent primitive idempotents in $\Terw_2$,
  and the same thing holds for $E_{X_2}$.
  Now $E_{X_1}\Terw_2 E_{X_2}\cong \Hom_{\Terw_2}(E_{X_1}\Terw_2, E_{X_2}\Terw_2)$
  and $E_{X_1}\Terw_2$ and $E_{X_2}\Terw_2$ are sums of $d$ non-isomorphic simple $\Terw_2$-modules.
  We have $\dim E_{X_1}\Terw_2 E_{X_2}\leq d$ and similarly $\dim E_{X_2}\Terw_2 E_{X_1}\leq d$.
  Now we can conclude that
  $$\dim \Terw_2=\sum_{i=0}^2 \sum_{j=0}^2 \dim E_{X_i}\Terw_2 E_{X_j}
  \leq 5+4d \leq p^a+8$$
  and the result holds.
\end{proof}

\begin{example}
  \begin{enumerate}[(1)]
    \item $\dim \Terw_2=p+8$ holds for $|X|=p=5$, $13$, $17$, $29$, $41$, $53$, $89$, $109$, $113$, $137$, $149$.
    \item $\dim \Terw_2=p+4$ holds for $|X|=p=37$, $61$, $73$, $97$, $101$.
    \item
    $\dim \Terw_2=p^a+6$ for $p^a=9$, 
    $\dim \Terw_2=p^a$ for $p^a=25$, 
    $\dim \Terw_2=p^a-14$ for $p^a=49$, 
    $\dim \Terw_2=p^a-48$ for $p^a=81$, 
    $\dim \Terw_2=p^a-54$ for $p^a=121$, 
    $\dim \Terw_2=p^a-72$ for $p^a=125$, 
  \end{enumerate}
\end{example}

\begin{cor} \label{ex:paley}
  For $\Paley(p)$ with a prime number $p\geq 7$,
  $\Terw_1(\Paley(p)) \subsetneq \Terw_2(\Paley(p)) = \Terw_3(\Paley(p)) \subsetneq \Terw_4(\Paley(p))$.
\end{cor}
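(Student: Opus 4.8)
The plan is to establish the three-part chain by combining the dimension bounds already collected in this section with the explicit structure results for $\Terw_1$ and $\Terw_4$. First I would note that $p \geq 7$ forces $(p-3)/2 \geq 2$, so that $\Terw_4(\Paley(p)) \cong M_3(\C) \oplus \bigoplus_{i=1}^{(p-3)/2} M_2(\C)$ from Theorem \ref{paley2} has more than one copy of a $2$-dimensional matrix block; in particular $\dim \Terw_4 = 2p+3$, which strictly exceeds the bound $\dim \Terw_2 \leq p+8$ from Theorem \ref{paley4} precisely when $2p+3 > p+8$, i.e.\ $p > 5$. Since $p \geq 7$ is prime, this inequality holds, giving $\Terw_2 \subsetneq \Terw_4$. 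Combined with Theorem \ref{paley3}, which gives $\Terw_2(\Paley(p)) = \Terw_3(\Paley(p))$ (so the middle equality is free), this already yields $\Terw_2 = \Terw_3 \subsetneq \Terw_4$.

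For the left-hand strict inclusion $\Terw_1 \subsetneq \Terw_2$, the cleanest route is again dimension counting. By Theorem \ref{paley1}, $\dim \Terw_1(\Paley(p)) = 11$, independently of $p$. So it suffices to show $\dim \Terw_2(\Paley(p)) > 11$ for $p \geq 7$. One approach is to exhibit explicitly at least $12$ linearly independent elements of $\Terw_2$: the five ``boundary'' matrices $I_{(0,0)}, I_{(0,1)}, I_{(0,2)}, I_{(1,0)}, I_{(2,0)}$ in the notation of Proposition \ref{SRG} (coming from $E_{X_0}$, $E_{X_0}AE_{X_0}$-type products, and their transposes), together with enough elements of $E_{X_1}\Terw_2 E_{X_1}$: since $\Paley(p)$ is an $(p, (p-1)/2, (p-5)/4, (p-1)/4)$-strongly regular graph, the subconstituent on $X_1$ is itself the Paley graph $\Paley((p-1)/2)$-like structure, and $\dim E_{X_1}\Terw_2 E_{X_1}$ can be bounded below using that it contains $I_{(1,1)}$, $A_{(1,1)}$, and $J_{(1,1)}$ — three linearly independent matrices for $p \geq 7$ — and similarly on $X_2$. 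Alternatively, and more robustly, one invokes the last remark before Corollary \ref{ex:paley}: the displayed values show $\dim \Terw_2 \geq p + 4 \geq 11$ for all primes $p \geq 7$ (with strict inequality for $p \geq 8$, hence for $p \geq 11$, and the single case $p = 7$ is checked directly from $\dim \Terw_2 = 7 - 14$... no — rather, for $p = 7$ one simply confirms $\dim \Terw_2 > 11$ by the direct computation $\dim \Terw_2 = 49 - 14 = 35$ recorded there, wait that is $p^a = 49$). The safest phrasing is: for $p \geq 13$ the table gives $\dim \Terw_2 \in \{p+4, p+8\} \geq 17 > 11$, and for $p = 7, 11$ one checks directly (e.g.\ $\dim \Terw_2(\Paley(7)) \geq 12$ by the boundary-plus-subconstituent count above), so $\Terw_1 \subsetneq \Terw_2$ in all cases.

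Assembling the pieces: $\Terw_1 \subsetneq \Terw_2$ from the dimension gap $11 < \dim \Terw_2$; $\Terw_2 = \Terw_3$ from Theorem \ref{paley3}; $\Terw_3 \subsetneq \Terw_4$ from $\dim \Terw_3 = \dim \Terw_2 \leq p+8 < 2p+3 = \dim \Terw_4$, valid for $p \geq 7$. This proves the corollary.

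The main obstacle, such as it is, is the left-hand inclusion: the dimension of $\Terw_2(\Paley(p))$ is not given by a clean closed formula (only the upper bound $p+8$ and sporadic exact values), so one must either produce an explicit list of $\geq 12$ independent elements uniformly in $p$, or argue that the generic subconstituent contributions already exceed $11$. The right-hand inclusion and the middle equality are immediate from results already proved in this section. I would therefore spend most of the write-up making the lower bound $\dim E_{X_1}\Terw_2 E_{X_1} \geq 3$ (hence $\dim \Terw_2 \geq 5 + 3 + 3 + \cdots$) airtight, noting that $I_{(1,1)}$, $A_{(1,1)}$, $J_{(1,1)}$ are linearly independent exactly because $X_1$ is neither a clique nor a coclique nor empty once $p \geq 7$.
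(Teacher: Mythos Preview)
Your handling of the middle equality and the right-hand strict inclusion is correct and matches the paper's one-line proof: Theorem~\ref{paley3} gives $\Terw_2 = \Terw_3$, and comparing $\dim \Terw_2 \leq p+8$ (Theorem~\ref{paley4}) with $\dim \Terw_4 = 2p+3$ (Theorem~\ref{paley2}) forces the strict inclusion once $p > 5$.

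For the left-hand inclusion $\Terw_1 \subsetneq \Terw_2$ you correctly note that the cited theorems supply only $\dim \Terw_1 = 11$ together with an \emph{upper} bound on $\dim \Terw_2$, so a separate lower bound on $\dim\Terw_2$ is needed; the paper's proof is silent here and your instinct to fill it in is right. However, two points in your argument need repair. First, a Paley graph on a prime number of vertices requires $p \equiv 1 \pmod 4$, so there is no $\Paley(7)$ or $\Paley(11)$; the hypothesis ``prime $p \geq 7$'' therefore means $p \geq 13$, and your detours through the case $p = 7$ and the table entry $p^a = 49$ are irrelevant. Second, the count you sketch stops at $5 + 3 + 3 = 11$, which does not yet exceed $\dim \Terw_1 = 11$: the five one-dimensional pieces $E_{X_i}\Terw_2 E_{X_j}$ with $i = 0$ or $j = 0$ (your ``$I_{(0,1)}$'' etc.\ should read $J_{(0,1)}$), together with $\{I_{(1,1)}, A_{(1,1)}, J_{(1,1)}\}$ and $\{I_{(2,2)}, A_{(2,2)}, J_{(2,2)}\}$, give exactly $11$ independent elements. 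To pass $11$ you must also use the off-diagonal blocks: $E_{X_1}\Terw_2 E_{X_2}$ contains $A_{(1,2)}$ and $J_{(1,2)}$, which are independent for $p \geq 13$ because each row of $A_{(1,2)}$ has $k - \lambda - 1 = (p-1)/4$ ones, strictly between $0$ and $|X_2|$; likewise for $E_{X_2}\Terw_2 E_{X_1}$. This gives $\dim \Terw_2 \geq 5 + 3 + 3 + 2 + 2 = 15 > 11$, and the argument is complete. (Even a single nonzero element of $E_{X_1}\Terw_2 E_{X_2}$ would have sufficed.) The remark that the first subconstituent is ``$\Paley((p-1)/2)$-like'' is unneeded and not in general true.
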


\begin{proof}
  This is clear by Theorems \ref{paley1}, \ref{paley2}, \ref{paley3}, \ref{paley4}. 
\end{proof}

\section{Examples for $\Terw_2 \neq \Terw_3$}\label{sec:Gamma}
We consider the following graph $\Delta_n$ with $n$ vertices for $n\geq 5$.
The graph $\Delta_5$ with the base vertex $5$ is the example of the minimum vertices for $\Terw_2 \neq \Terw_3$.
(We calculated by using McKay's database \cite{McKay} for connected simple graphs and GAP4 \cite{GAP4}).

\begin{center}
  \begin{tikzpicture}
    \coordinate[label=below:$n$]   (A) at (0,0.75);
    \coordinate[label=below:$n-1$] (B) at (2,0.75);
    \coordinate[label=below:$6$]   (C) at (4,0.75);
    \coordinate[label=below:$5$]   (D) at (6,0.75);
    \coordinate[label=right:$4$]   (E) at (8,0);
    \coordinate[label=right:$3$]   (F) at (8,0.5);
    \coordinate[label=right:$2$]   (G) at (8,1);
    \coordinate[label=right:$1$]   (H) at (8,1.5);
    \draw (E)--(F)--(G)--(H);
    \draw (H)--(D)--(E);
    \draw (G)--(D)--(F);
    \draw (C)--(D);
    \draw[dashed] (B)--(C);
    \draw (A)--(B);
    \fill[black] (A) circle (0.1);
    \fill[black] (B) circle (0.1);
    \fill[black] (C) circle (0.1);
    \fill[black] (D) circle (0.1);
    \fill[black] (E) circle (0.1);
    \fill[black] (F) circle (0.1);
    \fill[black] (G) circle (0.1);
    \fill[black] (H) circle (0.1);
  \end{tikzpicture}
\end{center}
We would like to show that $\Terw_2(\Delta_n, n)\subsetneq \Terw_3(\Delta_n, n)$.
By Proposition \ref{prop:valencyone}, it is enough to show it for the case $n=5$.

We set $\Terw_\ell:=\Terw_\ell(\Delta_5, 5)$ for $\ell=0,1,2,3,4$,
and $X_0:=\{5\}$, $X_1:=\{1,2,3,4\}$.
Obviously $\Terw_1=\Terw_2$ holds.
The adjacency matrix of $\Delta_5$ is
$$\left(
  \begin{array}{cccc|c}
    0&1&0&0&1\\
    1&0&1&0&1\\
    0&1&0&1&1\\
    0&0&1&0&1\\
    \hline
    1&1&1&1&0
  \end{array}
\right).$$
We set
$$
B_1=\left(
  \begin{array}{c|c}
    O_4 & {}^t \bs{0}_4\\
    \hline
    \bs{0}_4 & 1
  \end{array}
\right),\
B_i=\left(
  \begin{array}{c|c}
    O_4 & {}^t \bs{v}_i\\
    \hline
    \bs{0}_4 & 0
  \end{array}
\right)\ (i=2,3),\ 
B_i=B_{i-2}^T\ (i=4,5),$$
and
$$
B_i=\left(
  \begin{array}{c|c}
    B_{i}' & {}^t \bs{0}_4\\
    \hline
    \bs{0}_4 & 0
  \end{array}
\right)\ (i=6,\dots,13),
$$
where
$\bs{0}_4=(0,0,0,0)$, $O_4$ is the zero matrix of degree $4$,
$\bs{v}_2=(1,1,1,1)$, $\bs{v}_3=(0,1,1,0)$, and
$$
B_6'=\left(\begin{array}{cccc}
1&0&0&0\\
0&1&0&0\\
0&0&1&0\\
0&0&0&1
\end{array}\right),\ 
B_7'=\left(\begin{array}{cccc}
0&1&0&0\\
1&0&1&0\\
0&1&0&1\\
0&0&1&0
\end{array}\right),\ 
B_8'=\left(\begin{array}{cccc}
0&0&1&0\\
0&1&0&1\\
1&0&1&0\\
0&1&0&0
\end{array}\right),$$
$$
B_9'=\left(\begin{array}{cccc}
0&0&0&1\\
0&0&1&0\\
0&1&0&0\\
1&0&0&0
\end{array}\right),\ 
B_{10}'=\left(\begin{array}{cccc}
0&0&0&0\\
0&1&1&0\\
0&1&1&0\\
0&0&0&0
\end{array}\right),\ 
B_{11}'=\left(\begin{array}{cccc}
0&1&1&0\\
0&0&0&0\\
0&0&0&0\\
0&1&1&0
\end{array}\right),$$
$$
B_{12}'=\left(\begin{array}{cccc}
0&0&0&0\\
0&1&0&0\\
0&0&1&0\\
0&0&0&0
\end{array}\right),\ 
B_{13}'=\left(\begin{array}{cccc}
0&1&0&0\\
0&0&0&0\\
0&0&0&0\\
0&0&1&0
\end{array}\right).$$
Direct calculation shows the following.

\begin{lem}\label{lem:delta1}
  The set $\{B_1,\dots,B_{11}\}$ is a basis of $\Terw_2(\Delta_5, 5)$, and
  $\{B_1,\dots,B_{13}\}$ is a basis of $\Terw_3(\Delta_5, 5)$.
  Especially, $\dim \Terw_2(\Delta_5, 5)=11<13=\dim \Terw_3(\Delta_5, 5)$.
\end{lem}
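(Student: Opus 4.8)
The plan is to verify that every $B_i$ lies in the claimed algebra, that $B_1,\dots,B_{13}$ are linearly independent, and then to bound $\dim\Terw_2$ and $\dim\Terw_3$ from above so that the two ``basis'' assertions follow by a dimension count. Two preliminary reductions organize everything. First, the diameter of $\Delta_5$ with respect to the base vertex $5$ is $1$, so $X_1=X\setminus\{5\}$, $E_{X_1}=I-E_{X_0}$, and therefore $\Terw_2=\Terw_1=\C\langle A,E_{X_0}\rangle$ (this is the remark that $\Terw_1=\Terw_2$). Second, in $\Delta_5$ the vertex $5$ is the unique vertex of valency $4$, the vertices $\{1,4\}$ are exactly those of valency $2$, and $\{2,3\}$ those of valency $3$; a direct check then gives $\Aut(\Delta_5)=\langle(1\,4)(2\,3)\rangle$ of order $2$, which fixes $5$, so $G_5=\Aut(\Delta_5)$ with orbits $\{5\},\{1,4\},\{2,3\}$ on $X$, and hence $\Terw_3=\C\langle A,E_{X_0},E_{\{1,4\}},E_{\{2,3\}}\rangle$.

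For membership I would exhibit each $B_i$ with $1\le i\le 11$ as a short word in $A$ and $E_{X_0}$, reading off the explicit $5\times 5$ matrix $A$: for instance $B_1=E_{X_0}$, $B_6=I-E_{X_0}=E_{X_1}$, $B_2=E_{X_1}AE_{X_0}$, $B_4=B_2^{T}=E_{X_0}AE_{X_1}$, $B_7=E_{X_1}AE_{X_1}$, while $B_3,B_5,B_8,\dots,B_{11}$ come out as equally short combinations of the matrices $E_{X_1}A^{m}E_{X_0}$, the matrices $E_{X_1}A^{m}E_{X_1}$, and products among them; each such check is finite. For $\Terw_3$ one adds $B_{12}=E_{\{2,3\}}$ and $B_{13}=E_{\{1,4\}}AE_{\{2,3\}}$, which lie in $\Terw_3$ by definition. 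Linear independence is then read off the block structure coming from $X=\{5\}\cup X_1$: $B_1$ is supported on the $(\{5\},\{5\})$ block, $B_2,B_3$ on $(X_1,\{5\})$, $B_4,B_5$ on $(\{5\},X_1)$, and $B_6,\dots,B_{13}$ on $(X_1,X_1)$, so one only needs that $\{B_2,B_3\}$, that $\{B_4,B_5\}$, and that $\{B_6',\dots,B_{13}'\}$ are independent, each immediate from the displayed entry patterns.

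For the upper bounds: by Lemma~\ref{basic4}, $\dim\Terw_4$ is the number of $G_5$-orbits on $X\times X$; the involution $(1\,4)(2\,3)$ has $(5,5)$ as its only fixed pair, so there is one orbit of length $1$ and twelve of length $2$, whence $\dim\Terw_4=13$. Since $\Terw_3\subseteq\Terw_4$, the thirteen independent matrices $B_1,\dots,B_{13}$ must form a basis of $\Terw_3$ (and incidentally $\Terw_3=\Terw_4$). The real work is the bound $\dim\Terw_2\le 11$. Here the plan is to use the vector-space decomposition $\Terw_2=\Terw_0+(V\otimes V)$ inside $M_X(\C)$, where $V\subseteq\C X$ is the cyclic $A$-submodule generated by $\bs{e}_5$, the coordinate vector at the vertex $5$, and $V\otimes V$ denotes the space of matrices whose row and column spaces both lie in $V$: indeed, since $E_{X_0}$ has rank one, a word in $A$ and $E_{X_0}$ either avoids $E_{X_0}$ (so lies in $\Terw_0=\C[A]$) or, using $E_{X_0}A^{m}E_{X_0}=(A^{m})_{5,5}E_{X_0}$, collapses up to a scalar to some $A^{a}E_{X_0}A^{b}$, which is the outer product of column $5$ of $A^{a}$ with row $5$ of $A^{b}$, and the span of these outer products is exactly $V\otimes V$. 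It then remains to compute three dimensions. Via the $(1\,4)(2\,3)$-symmetry the characteristic polynomial of $A$ factors as $(x^{2}+x-1)(x^{3}-x^{2}-5x-2)$, which has five distinct roots, so by Theorem~\ref{basic0} $\dim\Terw_0=5$ and $\Terw_0$ is spanned by five rank-one eigenprojections; the three vectors $\bs{e}_5,A\bs{e}_5,A^{2}\bs{e}_5$ are independent and span the $3$-dimensional $(1\,4)(2\,3)$-fixed subspace, so $\dim V=3$ and $\dim(V\otimes V)=9$; and a polynomial in $A$ lies in $V\otimes V$ iff its image lies in $V$, i.e.\ iff it is a combination of the three eigenprojections attached to the roots of $x^{3}-x^{2}-5x-2$, so $\dim(\Terw_0\cap(V\otimes V))=3$. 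Hence $\dim\Terw_2=5+9-3=11$, and $B_1,\dots,B_{11}$ is a basis of $\Terw_2$, giving $\dim\Terw_2=11<13=\dim\Terw_3$.

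The step I expect to be the main obstacle is precisely this last one, $\dim\Terw_2\le 11$: the membership checks and the independence check are finite tabulations, and $\dim\Terw_3=13$ follows from one short orbit count, whereas the value $\dim\Terw_2=11$ needs either the structural decomposition $\Terw_2=\Terw_0+(V\otimes V)$ together with the eigenvalue analysis above, or, equivalently, a brute-force generation of $\C\langle A,E_{X_0}\rangle$ by repeated multiplication until the algebra closes --- which is the ``direct calculation'' the statement alludes to.
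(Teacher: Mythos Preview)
Your argument is correct and considerably more structural than the paper's own proof, which is the single sentence ``Direct calculation shows the following''; the paper evidently intends a brute-force generation of $\C\langle A,E_{X_0}\rangle$ and of $\C\langle A,E_{X_0},E_{\{1,4\}},E_{\{2,3\}}\rangle$ by repeated multiplication until closure. Your route --- the identity $\Terw_1=\Terw_0+(V\otimes V)$ for the cyclic $\C[A]$-module $V=\C[A]\bs{e}_5$, the factorisation $(x^2+x-1)(x^3-x^2-5x-2)$ of the characteristic polynomial via the $(1\,4)(2\,3)$-symmetry, and the orbit count $\dim\Terw_4=13$ to cap $\Terw_3$ --- explains the numbers $11$ and $13$ rather than merely verifying them. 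As a bonus it yields Lemma~\ref{lem:delta2} almost for free: the splitting of $\C X$ into the fixed and anti-fixed subspaces of $(1\,4)(2\,3)$ is $\Terw_2$-stable (both $A$ and $E_{X_0}$ commute with the involution), the action on the $3$-dimensional fixed part $V$ is all of $\End(V)\cong M_3(\C)$ via $V\otimes V$, and on the $2$-dimensional anti-fixed part $E_{X_0}$ acts as zero so $\Terw_2$ acts through the two distinct eigenvalues of $A$ there, giving $\C\oplus\C$. The paper instead deduces Lemma~\ref{lem:delta2} separately, starting from the numerical fact $\dim\Terw_2=11$.
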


\begin{lem}\label{lem:delta2}
  $\Terw_1(\Delta_5, 5)=\Terw_2(\Delta_5, 5)\cong M_3(\C)\oplus \C \oplus \C$
  and $\Terw_3(\Delta_5, 5)\cong M_3(\C)\oplus M_2(\C)$.
\end{lem}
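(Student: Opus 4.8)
The plan is to identify the Wedderburn decompositions of $\Terw_2(\Delta_5,5)$ and $\Terw_3(\Delta_5,5)$ using their dimensions (known from Lemma \ref{lem:delta1}) together with the module structure of the standard module $\C X$. For $\Terw_2$: we already have $\Terw_1=\Terw_2$, so it suffices to compute $\Terw_1(\Delta_5,5)$. I would apply Proposition \ref{one1} and Corollary \ref{one2}. The adjacency matrix $A$ of $\Delta_5$ has $5$ eigenvalues; I would compute its characteristic polynomial, observe that the base vertex $5$ has valency $4$ so the diameter with respect to $5$ is $D=1$, and check which eigenvectors vanish in the $5$th coordinate. Since $\dim\Terw_2=11$ and $\Terw_2$ is semisimple over $\C$, the only possibilities summing to $11$ are $M_3(\C)\oplus\C\oplus\C$ (giving $9+1+1=11$) or $M_2(\C)\oplus\bigoplus_{i=1}^{7}\C$ (giving $4+7=11$) — but $\C X$ is $5$-dimensional, so the number of simple modules (counted without multiplicity) is at most $5$, which rules out the second option and also $\C\oplus\cdots$; hence $\Terw_2\cong M_3(\C)\oplus\C\oplus\C$. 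Alternatively, and more cleanly, one checks that $E_{5,5}$ is a primitive idempotent, $\dim E_{5,5}\Terw_1=D+1=\dots$ is too small, so I would instead directly exhibit three non-isomorphic simple modules: the principal module $E_{X_0}\Terw_2$ (dimension to be computed) and the one-dimensional modules coming from eigenvectors vanishing at vertex $5$, and match multiplicities against $\dim\C X=5$.

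For $\Terw_3$: we know $\dim\Terw_3=13$ from Lemma \ref{lem:delta1}. Over $\C$ the semisimple algebras of dimension $13$ are, up to isomorphism, $M_3(\C)\oplus M_2(\C)$ ($9+4=13$), or $M_3(\C)\oplus\bigoplus_{i=1}^4\C$, or $M_2(\C)\oplus M_2(\C)\oplus\bigoplus_{i=1}^{5}\C$, etc. Since $\Terw_2\subset\Terw_3\subset M_X(\C)$ with $\Terw_2\cong M_3(\C)\oplus\C\oplus\C$, the $M_3(\C)$ block containing the primitive idempotent $E_{X_0}$ (the principal module, dimension $3$) must persist in $\Terw_3$ — more precisely, $E_{X_0}=E_{5,5}$ is still a primitive idempotent of $\Terw_3$ and the dimension of $E_{5,5}\Terw_3$ equals the number of primitive idempotents equivalent to it in a decomposition of $1$. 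I would argue that $\C X$ as a $\Terw_3$-module decomposes as (principal module of dimension $3$) $\oplus$ (another simple module), and since $3+d=5$ forces the complement to be $2$-dimensional and simple (it cannot split further, else $\dim\Terw_3$ would be smaller or the module wouldn't be faithful), we get $\Terw_3\cong M_3(\C)\oplus M_2(\C)$, which indeed has dimension $13$, confirming consistency. The cleanest route is: $\Terw_3$ acts faithfully on $\C X$ (it contains $I$ and is a subalgebra of $M_X(\C)$, $\dim\C X=5$), so $\sum n_i\le 5$ where $n_i$ are the matrix-block sizes; combined with $\sum n_i^2=13$, the only solution is $\{n_i\}=\{3,2\}$.

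The main obstacle is verifying faithfulness and the exact multiplicities — specifically, showing the standard module $\C X$ decomposes with each simple $\Terw_3$-module (resp. $\Terw_2$-module) appearing, so that $\sum n_i\le 5$ can be invoked. For $\Terw_3$ this is automatic since $\Terw_3$ is a subalgebra of $M_X(\C)$ containing the identity, hence the standard module is faithful and $\sum n_i\le\dim\C X=5$; together with $\sum n_i^2=13$ this pins down the isomorphism type with no further computation. For $\Terw_2$ the same faithfulness argument gives $\sum n_i\le 5$ and $\sum n_i^2=11$, whose only solution is $\{3,1,1\}$. So in fact both parts reduce to the arithmetic constraints $\sum n_i^2=\dim\Terw_\ell$ and $\sum n_i\le 5$, and the dimensions from Lemma \ref{lem:delta1} do all the work; the only thing to be careful about is confirming there are no other integer solutions (for $11$: $\{3,1,1\}$ only, since $\{2,2,\dots\}$ needs $\sum n_i^2=11$ impossible with parts $\le$ forcing $\sum n_i\le 5$; for $13$: $\{3,2\}$ only, as $\{3,1,1,1,1\}$ has $\sum n_i=7>5$ and $\{2,2,2,1\}$ has $\sum n_i^2=13$ but $\sum n_i=7>5$).
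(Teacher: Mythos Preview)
Your final approach is correct and is in fact cleaner than the paper's. Both arguments rely on Lemma~\ref{lem:delta1} for the dimensions $11$ and $13$, but then diverge. The paper works structurally: it observes that the induced subgraph on $X_1=\{1,2,3,4\}$ is the path $P_4$, so $E_{X_1}\Terw_2E_{X_1}$ contains the adjacency algebra of $P_4$ and hence $E_{X_1}$ splits into four mutually non-equivalent primitive idempotents $e_1,\dots,e_4$; then $\dim E_{X_0}\Terw_2 E_{X_1}=2$ forces exactly two of the $e_i$ to be equivalent to $E_{X_0}$, leaving only the two possibilities $M_3(\C)\oplus\C\oplus\C$ and $M_3(\C)\oplus M_2(\C)$, which the dimension distinguishes. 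The same reasoning is then rerun for $\Terw_3$. Your route bypasses the idempotent analysis entirely: faithfulness of the standard module forces every simple $\Terw_\ell$-module to occur in $\C X$, so $\sum n_i\le 5$, and the Diophantine constraints $\sum n_i^2=11$ (resp.\ $13$) with $\sum n_i\le 5$ have unique solutions $\{3,1,1\}$ (resp.\ $\{3,2\}$). What your argument gains is brevity and the avoidance of any computation beyond Lemma~\ref{lem:delta1}; what the paper's argument gains is an explicit identification of the principal module and the primitive idempotent decomposition, which is useful downstream for the induction in Theorem~\ref{thm:delta3} via Proposition~\ref{prop:valencyone}. Your proposal would benefit from stating the faithfulness step cleanly (a nonzero matrix in $M_X(\C)$ cannot annihilate $\C X$, hence each central idempotent of $\Terw_\ell$ acts nontrivially, so each simple module appears), and from trimming the earlier exploratory material about eigenvectors and Proposition~\ref{one1}, which you rightly abandon.
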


\begin{proof}
  The algebra $E_{X_1}\Terw_2 E_{X_1}$ contains the adjacency algebra of the path graph of length $4$.
  Thus $E_{X_1}$ is decomposed into four primitive idempotents
  $E_{X_1}=e_1+e_2+e_3+e_4$
  in $E_{X_1}\Terw_2 E_{X_1}$.
  and thus the identity matrix $I$ is decomposed into five primitive idempotents
  $I=E_{X_0}+e_1+e_2+e_3+e_4$ in $\Terw_2$.
  Since $\Hom_{\Terw_2}(E_{X_0}\Terw_2, E_{X_1}\Terw_2)\cong E_{X_0}\Terw_2 E_{X_1}=\C B_2\oplus \C B_3$
  has dimension $2$, exactly two of $\{e_1,e_2,e_3,e_4\}$, say $e_1$, $e_2$,
  are equivalent to $E_{X_0}$.
  Thus $\Terw_2\cong M_3(\C)\oplus \C \oplus \C$ if $e_3$ and $e_4$ are non-equivalent,
  or $\Terw_2\cong M_3(\C)\oplus M_2(\C)$ if $e_3$ and $e_4$ are equivalent.
  However, we know that $\dim \Terw_2=11$.
  We have the result for $\Terw_2$.

  By the same argument shows $\Terw_3\cong M_3(\C)\oplus M_2(\C)$.
\end{proof}

Now we can determine $\Terw_\ell(\Delta_n, n)$, $\ell=1,2,3,4$.

\begin{thm}\label{thm:delta3}
  For 
  $n\geq 5$,
  $\Terw_1(\Delta_n, n) = \Terw_2(\Delta_n, n)\cong M_{n-2}(\C)\oplus\C \oplus \C$ and
  $\Terw_3(\Delta_n, n)=\Terw_4(\Delta_n, n)\cong M_{n-2}(\C)\oplus M_2(\C)$.
  Especially, $\dim \Terw_1(\Delta_n, n) = \dim \Terw_2(\Delta_n, n)=n^2-4n+6<n^2-4n+8
  =\dim \Terw_3(\Delta_n, n)=\dim \Terw_4(\Delta_n, n)$ holds.
\end{thm}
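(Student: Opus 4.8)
The plan is to reduce the general case $n \geq 5$ to the base case $n = 5$ by applying Proposition~\ref{prop:valencyone} repeatedly. The graph $\Delta_n$ has a base vertex $n$ of valency $1$, whose unique neighbor is $n-1$, and deleting $n$ (together with the edge $\{n-1,n\}$) yields exactly $\Delta_{n-1}$ with base vertex $n-1$. So I would first record this observation: $(\Delta_n)' = \Delta_{n-1}$ and $x_1 = n-1$ in the notation of Section~\ref{sec:val1}. This works all the way down until we reach $\Delta_5$ with base vertex $5$ (note $5$ still has valency $1$ in $\Delta_5$). Then Lemma~\ref{lem:delta2} gives the base case: $\Terw_1(\Delta_5,5) = \Terw_2(\Delta_5,5) \cong M_3(\C)\oplus\C\oplus\C$ and $\Terw_3(\Delta_5,5) \cong M_3(\C)\oplus M_2(\C)$.

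Next I would run the induction using Proposition~\ref{prop:valencyone}. For $\Terw_2$: assume $\Terw_2(\Delta_{n-1}, n-1) \cong M_{n-3}(\C) \oplus \C \oplus \C$ with $E_{n-1}$ lying in the block $M_{n-3}(\C)$; then the proposition (with $n_0 = n-3$, and the two $\C$ blocks untouched) gives $\Terw_2(\Delta_n, n) \cong M_{n-2}(\C) \oplus \C \oplus \C$ with $E_n$ in the $M_{n-2}(\C)$ block. Likewise, for $\Terw_3$: assume $\Terw_3(\Delta_{n-1}, n-1) \cong M_{n-3}(\C) \oplus M_2(\C)$ with $E_{n-1}$ in the $M_{n-3}(\C)$ block; the proposition yields $\Terw_3(\Delta_n, n) \cong M_{n-2}(\C) \oplus M_2(\C)$ with $E_n$ in the $M_{n-2}(\C)$ block. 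The hypothesis that $E_{x_1}$ sits in the "large" block is exactly the inductive statement, so the induction is self-sustaining; I just need to check that in the base case $\Delta_5$ the idempotent $E_5$ indeed lies in the $M_3(\C)$ block, which is immediate from the proof of Lemma~\ref{lem:delta2} (the principal module $E_{X_0}\Terw_2$ has dimension $3$).

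It remains to handle $\Terw_1$ and $\Terw_4$. For $\Terw_1(\Delta_n,n)$: since vertex $n$ has valency $1$, the argument showing $\Terw_1 = \Terw_2$ in the $\Delta_5$ case (the principal module is generated by $E_{X_0}A^i$ and $E_{X_1}\Terw_2 E_{X_1}$ already contains the relevant path-algebra pieces) extends verbatim — more simply, one checks $E_{X_0}, E_{X_1} \in \Terw_1(\Delta_n, n)$ directly, giving $\Terw_1 \supseteq \Terw_2$, hence equality. For $\Terw_4(\Delta_n, n)$: the point is that $\Aut(\Delta_n)$ fixing $n$ also fixes the whole tail path $n, n-1, \dots, 6$ pointwise, so $G_n = G_5$ acting on the "head" part $\{1,2,3,4,5\}$; one computes $G_5 = \langle (2,3) \rangle$ of order $2$ (the transposition swapping the two degree-$2$ vertices adjacent to $5$). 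Then $\Terw_3 = \Terw_4$ follows by comparing dimensions: $\dim\Terw_4$ equals the number of $G_n$-orbits on $X\times X$ (Lemma~\ref{basic4}), which I would count to be $n^2 - 4n + 8$, matching $\dim\Terw_3$ from the induction; alternatively, compute the permutation character of $G_n$ on $X$ and decompose it as $(n-2)\cdot 1_{G_n} + 2\cdot\chi_{G_n}$, giving $\Terw_4 \cong M_{n-2}(\C)\oplus M_2(\C)$ directly.

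The main obstacle is not any single deep step but making sure the bookkeeping in the $\Terw_4$ computation is correct: one must verify carefully that every automorphism of $\Delta_n$ fixing $n$ fixes the entire path $n, n-1, \dots, 5$ (which follows because each $n-i$ for $i \geq 1$ is, inductively, the unique vertex at distance $i$ from $n$ of the appropriate degree, the tail being a pendant path), and then that the induced action of the order-$2$ group $G_5$ on the $5$-vertex head is faithful with the permutation character decomposing as claimed. Once $G_n \cong \Z/2\Z$ with the right action is pinned down, the orbit count on $X \times X$ — or equivalently the permutation-character computation — is routine, and the equality $\Terw_2 \subsetneq \Terw_3$ then reads off from the block structures $M_{n-2}(\C)\oplus\C\oplus\C$ versus $M_{n-2}(\C)\oplus M_2(\C)$, with the dimension gap $n^2-4n+6 < n^2-4n+8$.
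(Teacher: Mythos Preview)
Your overall strategy matches the paper's: reduce $\Terw_2$ and $\Terw_3$ to the base case $\Delta_5$ via repeated applications of Proposition~\ref{prop:valencyone}, invoke Lemma~\ref{lem:delta2}, and handle $\Terw_4$ by computing the order-$2$ automorphism group and its permutation character. That part is fine.

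There is, however, a genuine gap in your treatment of $\Terw_1(\Delta_n,n)=\Terw_2(\Delta_n,n)$. For $n>5$ the diameter of $\Delta_n$ with respect to the base vertex $n$ is $D=n-4$, so the distance partition has $n-3$ cells $X_0=\{n\},\,X_1=\{n-1\},\dots,X_{n-5}=\{5\},\,X_{n-4}=\{1,2,3,4\}$. Showing only $E_{X_0},E_{X_1}\in\Terw_1$ does \emph{not} give $\Terw_2\subset\Terw_1$; you need every $E_{X_i}$. The $\Delta_5$ case is special precisely because $D=1$ there, so there is no argument to ``extend verbatim''. The fix is the one the paper uses: prove by induction on $k$ (downward from $n$) that $E_k\in\Terw_1(\Delta_n,n)$ for all $5\le k\le n$, using that each such $k$ lies on the pendant path so that, e.g., $(I-\sum_{j>k}E_j)\,A\,E_{k+1}\,A\,(I-\sum_{j>k}E_j)=E_k$. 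Then $E_{X_{n-4}}=I-\sum_{k=5}^n E_k\in\Terw_1$ as well.

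One small slip in the $\Terw_4$ part: the transposition $(2,3)$ is not an automorphism of $\Delta_5$ (it sends the edge $\{1,2\}$ to the non-edge $\{1,3\}$). The nontrivial automorphism is $(1,4)(2,3)$. This does not affect your conclusion that $G_n\cong\Z/2\Z$, and your permutation-character computation $\rho=(n-2)\,1_{G_n}+2\,\chi_{G_n}$ (hence $\Terw_4\cong M_{n-2}(\C)\oplus M_2(\C)$) then goes through exactly as in the paper.
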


\begin{proof}
  By Proposition \ref{prop:valencyone} and Lemma \ref{lem:delta2}, we have the results for
  $\Terw_2(\Delta_n, n)$ and $\Terw_3(\Delta_n, n)$.
  By induction, we can prove that $E_k \in \Terw_1(\Delta_n, n)$ for $5 \leq k \leq n$.
Thus $\Terw_1(\Delta_n,n) = \Terw_2(\Delta_n, n)$.
  The automorphism group of $\Delta_n$ is a cyclic group of order $2$.
  By the similar argument to the proof of Theorem \ref{prop_med} for the permutation character,
  we have the result for $\Terw_4(\Delta_n, n)$.
\end{proof}

\begin{rem}
 We can find no examples for $\Terw_3 \neq \Terw_4$ among all connected simple graphs with vertices less than or equal to $9$ (by using the McKay's database \cite{McKay} and GAP4 \cite{GAP4}).
Thus we find no examples for $\Terw_2 \neq \Terw_3 \neq \Terw_4$.
\end{rem}


\bibliographystyle{amsplain}
\bibliography{genterw}
\end{document}